\documentclass[a4paper,12pt]{amsart}

\usepackage[T1]{fontenc}
\usepackage{lmodern}
\usepackage{a4wide}
\usepackage{microtype}
\usepackage{amsmath,amssymb,amsthm,mathtools}
\usepackage[mathcal]{eucal}
\usepackage{enumitem}
\usepackage{url}
\usepackage{hyperref}
\usepackage{bookmark}

\hypersetup{
  colorlinks=true,
  linkcolor=[rgb]{0.10,0.10,0.45},
  citecolor=[rgb]{0.10,0.10,0.45},
  urlcolor=[rgb]{0.10,0.10,0.45},
  pdftitle={Maximal right ideals of B(E)},
  pdfauthor={Tomasz Kania and Niels Jakob Laustsen}
}

\numberwithin{equation}{section}

\theoremstyle{plain}
\newtheorem{maintheorem}{Theorem}

\newtheorem{theorem}{Theorem}[section]
\newtheorem{proposition}[theorem]{Proposition}
\newtheorem{corollary}[theorem]{Corollary}
\newtheorem{lemma}[theorem]{Lemma}
\newtheorem{question}[theorem]{Question}

\theoremstyle{definition}

\newtheorem{example}[theorem]{Example}
\newtheorem{remark}[theorem]{Remark}

\newcommand{\cA}{\mathcal A}
\newcommand{\cB}{\mathcal B}
\newcommand{\cF}{\mathcal F}
\newcommand{\cK}{\mathcal K}
\newcommand{\cS}{\mathcal S}
\newcommand{\C}{\mathbb C}
\newcommand{\N}{\mathbb N}
\newcommand{\id}{I}
\newcommand{\ran}{\operatorname{ran}}
\newcommand{\Lift}{\operatorname{Lift}}
\newcommand{\Com}{\operatorname{Com}}
\newcommand{\op}{\mathrm{op}}

\title[Maximal right ideals of $\cB(E)$]{Maximal right ideals of the
Banach algebra\\ of bounded operators on a Banach space}
\author[T.~Kania]{Tomasz Kania}
\address[T.~Kania]{Mathematical Institute\\Czech Academy of Sciences\\\v{Z}itn\'a 25 \\115 67 Praha 1\\Czech Republic  and  Institute of Mathematics and Computer Science\\ Jagiellonian University\\ {\L}ojasiewicza 6, 30-348 Krak\'{o}w, Poland
}
\email{kania@math.cas.cz, tomasz.marcin.kania@gmail.com}
\thanks{RVO: 67985840.}

\author[N.J.~Laustsen]{Niels Jakob Laustsen} 
\address[N.J.~Laustsen]{School of Mathematical Sciences, Charles Carter Building, Lancaster University, Lancaster LA1 4YX, United Kingdom}
\email{n.laustsen@lancaster.ac.uk}
\subjclass[2020]{Primary 47L10, 46H10; Secondary 47L20, 46B42.}

\keywords{Dales--\.{Z}elazko conjecture, Banach algebra, maximal right ideal,
algebraic finite generation, algebra of bounded operators, Banach space, 
lifting ideal, right-invertible operator, operator ideal,
Banach lattice.}

\begin{document}

\begin{abstract}
We study finitely generated maximal right ideals of the
Banach algebra $\cB(E)$ of bounded operators on a complex Banach
space~$E$.  Every maximal right ideal is either fixed by a non-zero
functional or contains the ideal of finite-rank operators; when $E$ is
infinite-dimensional, each non-fixed maximal right ideal in fact
contains the ideal of inessential operators.

Using the elementary representation of finitely generated right ideals
as lifting ideals
$\Lift(T)=\{TU:U\in\cB(E,E^n)\}$, where $T\in\cB(E^n,E)$ for some $n\in\N$, we identify the exact operator-theoretic obstruction.
The ideal $\Lift(T)$ contains the finite-rank operators precisely when
$T$ is surjective, and it equals $\cB(E)$ precisely when $T$ is right
invertible.  If $T$ is surjective but not right invertible, then
$\Lift(T)$ is maximal exactly when the row operator $[T\ S]$ is right
invertible for every $S\in\cB(E)\setminus\Lift(T)$.

We apply this framework, together with duality, pullback,
lattice-theoretic and cardinality arguments, to obtain maximal right
ideals which are not finitely generated for large classes of Banach
spaces.  These include the following infinite-dimensional spaces:
reflexive spaces, separable spaces with an unconditional Schauder
decomposition into a countably infinite sequence of non-zero subspaces,
spaces containing a complemented copy of $\ell_1$, KB-spaces,
Lebesgue spaces $L_p(\mu)$ for $1\leqslant p<\infty$,
full Orlicz spaces with order-continuous norm, and scalar-plus-compact
spaces.  We obtain the stronger conclusion that
every finitely generated maximal right ideal is fixed for Hilbert
spaces, $\ell_1(\Gamma)$-spaces, reflexive spaces with the bounded
approximation property, and the mixed spaces
$\ell_1(\Gamma)\oplus H$ with $H$ a separable Hilbert space.
\end{abstract}

\maketitle

\section{Introduction}
\noindent
Let $A$ be a unital complex Banach algebra.  The Dales--\.{Z}elazko conjecture asserts that if every maximal left ideal of $A$ is finitely generated, then $A$ is finite-dimensional~\cite{DalesZelazko}.  Passing to the opposite algebra interchanges left and right ideals without changing finite generation, so the corresponding assertion for maximal right ideals is not a different conjecture.  The general problem remains open; White has recently given an account of the general state of affairs in his work on Beurling algebras~\cite{WhiteBeurling}. 

An important positive case was established by Blecher and the
first-named author, who proved in~\cite{BlecherKania} that a
$C^*$-algebra~$A$ is finite-dimensional if and only if every maximal
right ideal of~$A$ is finitely generated.  Thus
$C^*$-algebras satisfy the right-sided Dales--\.{Z}elazko conjecture.
A key ingredient in their argument is that every closed, 
finitely generated right ideal of a $C^*$-algebra is of the form $pA$
for some projection $p\in A$.  They also obtained an analogous
characterisation of Hilbert $C^*$-modules whose maximal right
submodules are all finitely generated.

In collaboration with Dales, Kochanek, and Koszmider~\cite{DKKKL}, we developed the left-sided theory systematically for $A=\cB(E)$ (the algebra of bounded operators on a Banach space~$E$), exploiting the additional geometric structure that this case offers, with further results obtained in~\cite{KLindiana}. 
The purpose of this note is to formulate the right-sided theory in a way that makes the remaining obstruction completely explicit and avoids replacing a~quotient map by a bounded linear right inverse without justification. 

We shall now summarise our main findings; we refer to Section~\ref{S:prelim} for details of any unexplained notation or conventions. 

Let $E$ be a non-zero Banach space, and take a non-zero functional $\lambda\in E^*$. A straight\-forward argument shows that the set
\begin{equation}\label{eq:fixedideal}
  \mathcal M^r_\lambda
  =\{T\in\cB(E):\lambda T=0\}  
\end{equation}
is a maximal right ideal of~$\cB(E)$ generated by an idempotent operator; see Proposition~\ref{prop:fixed} for details. We call the  maximal right ideals of this form \emph{fixed}.

Our first main result is the right-sided counterpart of \cite[Theorem~1.1]{DKKKL}.

\begin{maintheorem}[Right-sided dichotomy]\label{thm:A}
  Let~$\mathcal R$ be a maximal right ideal of $\cB(E)$ for some non-zero Banach space~$E$.  Exactly one of the following two alternatives holds:
\begin{enumerate}[label=\rm(\roman*)]
\item $\mathcal R=\mathcal M^r_\lambda$ for some $0\neq\lambda\in E^*$; or
\item $\mathcal R$ contains the ideal~$\cF(E)$ of finite-rank operators on~$E$.
\end{enumerate}
\end{maintheorem}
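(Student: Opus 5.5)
We must show that the two alternatives are mutually exclusive, and that one of them always holds.

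\emph{Exclusivity.} Suppose $\mathcal M^r_\lambda\supseteq\cF(E)$. Choose $x\in E$ with $\lambda(x)=1$ and form the rank-one operator $x\otimes\lambda\colon y\mapsto\lambda(y)x$. Then $\lambda(x\otimes\lambda)=\lambda(x)\lambda=\lambda\neq0$, so $x\otimes\lambda\notin\mathcal M^r_\lambda$, a contradiction. Hence (i) and (ii) cannot hold simultaneously. We also use from Proposition~\ref{prop:fixed} that each $\mathcal M^r_\lambda$ is a maximal right ideal.

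\emph{Dichotomy.} Suppose $\mathcal R$ does not contain $\cF(E)$. Every finite-rank operator is a finite sum of rank-one operators, so there is a rank-one $y\otimes\mu\notin\mathcal R$ with $y\neq0$ and $\mu\neq0$. The key observation is that, for any $T\in\cB(E)$,
\[
(y\otimes\mu)T = y\otimes(\mu T),
\]
and as $T$ varies, $\mu T$ runs through a set of functionals. Pick $z\in E$ with $\mu(z)=1$.

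The plan is to find $\lambda$ with $\mathcal R\subseteq\mathcal M^r_\lambda$. Maximality of $\mathcal R$, together with the fact that $\mathcal M^r_\lambda$ is a proper right ideal, then forces $\mathcal R=\mathcal M^r_\lambda$.

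\emph{Defining $\lambda$.} Consider the set
\[
J=\{\phi\in E^*: y\otimes\phi\in\mathcal R\}.
\]
This is a linear subspace of $E^*$, and it is invariant under right multiplication by operators, since $(y\otimes\phi)T=y\otimes(\phi T)$. Moreover $\mu\notin J$.

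Since $\mathcal R$ is maximal and $y\otimes\mu\notin\mathcal R$, we have $\mathcal R+(y\otimes\mu)\cB(E)=\cB(E)$. Hence
\[
I=R+y\otimes(\mu T)\quad\text{for some } R\in\mathcal R,\ T\in\cB(E).
\]
Set $\lambda=\mu T\in E^*$. Then $R=I-y\otimes\lambda\in\mathcal R$. Here $\lambda\neq0$, for otherwise $I\in\mathcal R$.

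\emph{Showing $\mathcal R\subseteq\mathcal M^r_\lambda$.} Take any $S\in\mathcal R$. Then
\[
S=RS+y\otimes(\lambda S).
\]
Since $S\in\mathcal R$ and $RS\in\mathcal R$ (a right ideal), it follows that $y\otimes(\lambda S)\in\mathcal R$, that is, $\lambda S\in J$.

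It remains to show $\lambda S=0$. Suppose instead $\lambda S\neq0$. Choose $w$ with $\lambda S w\neq0$. Using $\lambda S\in J$ and the right-invariance of $J$, we get
\[
\lambda S(w\otimes\psi)=(\lambda Sw)\,\psi\in J\quad\text{for every }\psi\in E^*.
\]
So $J=E^*$, which contradicts $\mu\notin J$. Hence $\lambda S=0$, giving $S\in\mathcal M^r_\lambda$ and so $\mathcal R\subseteq\mathcal M^r_\lambda$.

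The main obstacle is the right-invariance trick in this last step, which shows that $J$ is either $\{0\}$ or all of $E^*$. It is handled by composing with rank-one operators $w\otimes\psi$ to reach every functional; no continuity or closedness of $\mathcal R$ is needed.
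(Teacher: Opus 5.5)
Your proof is correct. The opening step is the same as the paper's. You take a rank-one operator outside $\mathcal R$ and use maximality to obtain $\id_E-y\otimes\lambda\in\mathcal R$ with $\lambda=\mu T\neq0$.

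The two arguments then go in opposite directions. In your notation, the paper proves $\mathcal M^r_\lambda\subseteq\mathcal R$. It takes $z$ with $\lambda(z)=1$ and the idempotent $P=\id_E-z\otimes\lambda$. Since $\lambda P=0$, we have $P=(\id_E-y\otimes\lambda)P\in\mathcal R$, hence $\mathcal M^r_\lambda=P\cB(E)\subseteq\mathcal R$. It then concludes from the maximality of $\mathcal M^r_\lambda$ established in Proposition~\ref{prop:fixed}.

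You instead prove $\mathcal R\subseteq\mathcal M^r_\lambda$. You use the decomposition $S=(\id_E-y\otimes\lambda)S+y\otimes(\lambda S)$, together with the observation that any non-zero element of the right-invariant subspace $J$ would generate $y\otimes\mu$. You then conclude from the maximality of $\mathcal R$, needing only that $\mathcal M^r_\lambda$ is proper, which is trivial. So you never actually use the maximality half of Proposition~\ref{prop:fixed}; your remark that you ``also use'' it is superfluous.

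What each buys:
\begin{itemize}
\item The paper's route is a one-liner once Proposition~\ref{prop:fixed} is available, and it places the idempotent generator of $\mathcal M^r_\lambda$ inside $\mathcal R$ explicitly.
\item Your route is self-contained. As a by-product it forces $\lambda(y)=1$, because $0=\lambda(\id_E-y\otimes\lambda)=(1-\lambda(y))\lambda$. So the element $\id_E-y\otimes\lambda$ produced by maximality is already that idempotent generator.
\end{itemize}
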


For $n\in\N$ and $T\in\cB(E^n,E)$, set
\begin{equation}\label{eq:Lift}
\Lift(T) = \{ TU : U\in\cB(E,E^n)\}\subseteq\cB(E).    
\end{equation}
Our second main result identifies the operator-theoretic content of this
representation.

\begin{maintheorem}[Lifting-ideal formulation]\label{thm:B}
Let $E$ be a Banach space.
\begin{enumerate}[label={\rm(\roman*)}]
\item\label{thm:B:i}
A subset $\mathcal R$ of $\cB(E)$ is a finitely generated right ideal if
and only if
\[
  \mathcal R=\Lift(T)
\]
for some $n\in\N$ and some $T\in\cB(E^n,E)$.
\end{enumerate}
Let $T\in\cB(E^n,E)$ for some $n\in\N$. Then:
\begin{enumerate}[label={\rm(\roman*)},resume]
\item\label{thm:B:ii}
$\cF(E)\subseteq\Lift(T)$ if and only if $T$ is surjective.
\item\label{thm:B:iii}
$\Lift(T)=\cB(E)$ if and only if $T$ is right invertible.
\item\label{thm:B:iv}
Suppose that $T$ is not right invertible. Then the right ideal $\Lift(T)$ is maximal if
and only if, for every $S\in\cB(E)\setminus\Lift(T)$, the row operator
\begin{equation}\label{thm:B:iv:row_op}
  [T\ S]\colon E^n\oplus E\longrightarrow E,
  \qquad [T\ S](x,y)=Tx+Sy,
\end{equation}
is right invertible.
\end{enumerate}
\end{maintheorem}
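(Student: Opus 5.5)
We treat the four parts in turn. For each we identify $\cB(E,E^n)$ with columns $U=(U_1,\dots,U_n)^{\mathsf T}$, $U_j\in\cB(E)$, and $\cB(E^n,E)$ with rows $T=[T_1\ \cdots\ T_n]$. Then $TU=\sum_j T_jU_j$.

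For \ref{thm:B:i}, a right ideal generated by $T_1,\dots,T_n$ is by definition $\{\sum_j T_jU_j : U_j\in\cB(E)\}$. Since $\cB(E)$ is unital, we need not add integer multiples of the generators. This set is exactly $\Lift([T_1\ \cdots\ T_n])$. Conversely, $\Lift(T)$ is visibly a right ideal, because $TU\cdot V=T(UV)$, and it is generated by the entries $T_j$. This part is purely formal.

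For \ref{thm:B:ii}, suppose first that $\cF(E)\subseteq\Lift(T)$. Given $y\in E$, pick $\lambda\in E^*$ and $x\in E$ with $\lambda(x)=1$, and write $y\otimes\lambda=TU$. Then $y=TUx$, so $T$ is surjective. Conversely, suppose $T$ is surjective. Since $\Lift(T)$ is a right ideal closed under addition, it suffices to show every rank-one operator $y\otimes\lambda$ lies in it. Choose $w\in E^n$ with $Tw=y$ and set $U=w\otimes\lambda\in\cB(E,E^n)$. Then $TU=y\otimes\lambda$. No bounded right inverse is needed, only pointwise preimages, which is the point the authors stress.

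For \ref{thm:B:iii}, $\Lift(T)=\cB(E)$ if and only if $I_E\in\Lift(T)$, since $\Lift(T)$ is a right ideal. This holds if and only if $TU=I_E$ for some bounded $U$, that is, if and only if $T$ is right invertible.

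Part \ref{thm:B:iv} is the only one needing care. Assume $T$ is not right invertible, so by \ref{thm:B:iii} $\Lift(T)$ is proper. Maximality means that for every $S\notin\Lift(T)$ the right ideal $\Lift(T)+S\cB(E)$ equals $\cB(E)$. By \ref{thm:B:i}, $\Lift(T)+S\cB(E)=\Lift([T\ S])$, with $[T\ S]\in\cB(E^{n+1},E)$ under $E^n\oplus E\cong E^{n+1}$. By \ref{thm:B:iii}, this equals $\cB(E)$ exactly when $[T\ S]$ is right invertible.

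The remaining step is the standard equivalence: a proper right ideal $\mathcal R$ is maximal if and only if $\mathcal R+S\cB(E)=\cB(E)$ for every $S\notin\mathcal R$.
\begin{itemize}
\item[($\Rightarrow$)] If $\mathcal R$ is maximal and $S\notin\mathcal R$, then $\mathcal R+S\cB(E)$ is a right ideal strictly containing $\mathcal R$, since it contains $S=S\cdot I$. Hence it is all of $\cB(E)$.
\item[($\Leftarrow$)] Suppose $\mathcal J\supsetneq\mathcal R$ is a right ideal and $S\in\mathcal J\setminus\mathcal R$. Then $\cB(E)=\mathcal R+S\cB(E)\subseteq\mathcal J$.
\end{itemize}
Properness of $\mathcal R$ is exactly where the hypothesis that $T$ is not right invertible enters. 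I expect no genuine obstacle. The only points to watch are that the identification $E^n\oplus E\cong E^{n+1}$ makes $[T\ S]$ fit the framework of \ref{thm:B:i} and \ref{thm:B:iii}, and that \ref{thm:B:ii} uses only algebraic preimages.
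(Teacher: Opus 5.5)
Your proposal is correct and follows essentially the same route as the paper's proof. You use row/column matrices where the paper uses $\iota_j,\pi_j$, pointwise preimages $w\otimes\lambda$ in (ii), $I_E\in\Lift(T)$ in (iii), and the standard maximality criterion for proper right ideals applied to $\Lift(T)+S\cB(E)=\Lift([T\ S])$ in (iv). One small point: in the converse of (ii) you should dispose of the trivial case $E=\{0\}$ separately, as the paper does. There no $\lambda$ with $\lambda(x)=1$ exists, but surjectivity of $T$ is automatic.
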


Thus a non-fixed, finitely generated maximal right ideal is not merely induced by a row operator; it is exactly a maximal lifting ideal associated with a bounded linear surjection $E^n\twoheadrightarrow E$ that is not right invertible.

\begin{maintheorem}\label{thm:C}
Let $E$ be a Banach space, and suppose that at least one of the following two conditions is satisfied:
\begin{enumerate}[label=\rm(\roman*)]
\item\label{thm:C:i} For every $n\in\N$, every surjective operator $T\in\cB(E^n,E)$ is right invertible.
\item\label{thm:C:ii} $E$ is reflexive and has the bounded approximation property.
\end{enumerate}
Then every finitely generated maximal right ideal of $\cB(E)$ is fixed.  If $E$ is infinite-dimensional, $\cB(E)$ therefore contains a maximal right ideal which is not finitely generated.
\end{maintheorem}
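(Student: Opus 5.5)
The plan is to reduce everything to Theorem~\ref{thm:B}, using Theorem~\ref{thm:A} to dispose of the fixed case. Let $\mathcal R$ be a finitely generated maximal right ideal of $\cB(E)$ and suppose, for a contradiction, that it is not fixed. By Theorem~\ref{thm:B}\ref{thm:B:i} we can write $\mathcal R=\Lift(T)$ for some $n\in\N$ and $T\in\cB(E^n,E)$. Theorem~\ref{thm:A} gives $\cF(E)\subseteq\mathcal R$, so $T$ is surjective by Theorem~\ref{thm:B}\ref{thm:B:ii}. Under condition~\ref{thm:C:i}, $T$ is then right invertible, so $\mathcal R=\cB(E)$ by Theorem~\ref{thm:B}\ref{thm:B:iii}. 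This contradicts properness, and case~\ref{thm:C:i} is done.

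Under condition~\ref{thm:C:ii} I aim to show directly that $T$ is right invertible. Maximal right ideals of a unital Banach algebra are closed, so $\mathcal R\supseteq\overline{\cF(E)}$. Since $E$ is reflexive with the approximation property, $E^*$ also has it, so $\overline{\cF(E)}=\cK(E)$; in any case I only need finite-rank operators below. Consider the bounded map $\Phi\colon U\mapsto TU$ from $\cB(E,E^n)$ to $\cB(E)$ and the closed subspace $\mathcal X=\Phi^{-1}(\overline{\cF(E)})$. Then $\Phi$ maps $\mathcal X$ onto the Banach space $\overline{\cF(E)}$, because every element of $\overline{\cF(E)}\subseteq\mathcal R$ has the form $TU$. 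By the open mapping theorem there is a constant $C$ such that every $K\in\overline{\cF(E)}$ equals $TU$ for some $U$ with $\|U\|\leqslant C\|K\|$.

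Next I use the bounded approximation property to pick a bounded net $(P_\alpha)$ of finite-rank operators converging strongly to $\id$. This gives $U_\alpha$ with $TU_\alpha=P_\alpha$ and $\sup_\alpha\|U_\alpha\|<\infty$. Because $E^n$ is reflexive, bounded subsets of $\cB(E,E^n)$ are relatively compact in the weak operator topology, so a subnet $U_\beta$ converges in that topology to some $U\in\cB(E,E^n)$. Since $T$ is weak-to-weak continuous, $TU_\beta x\to TUx$ weakly, while $TU_\beta x=P_\beta x\to x$ in norm. Hence $TU=\id$, so $T$ is right invertible, and we reach the same contradiction as in case~\ref{thm:C:i}. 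The main point needing care is this compactness-plus-open-mapping step; the uniform bound on the $U_\alpha$ is exactly what the open mapping theorem supplies.

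Finally, suppose $E$ is infinite-dimensional. Then $\cF(E)$ is a proper right ideal, since $\id\notin\cF(E)$, so by Zorn's lemma it is contained in a maximal right ideal $\mathcal R$. By the exclusivity in Theorem~\ref{thm:A}, $\mathcal R$ is not fixed. By what was just proved, $\mathcal R$ therefore cannot be finitely generated.
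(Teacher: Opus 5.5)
Your proof is correct and follows essentially the paper's argument: you reduce via Theorems~\ref{thm:A} and~\ref{thm:B} to showing that the surjection $T$ is right invertible, obtain uniformly bounded lifts of finite-rank operators from the open mapping theorem, and pass to a limit of lifts of a bounded approximate identity supplied by the bounded approximation property. The differences are cosmetic. You apply the open mapping theorem to $\Phi^{-1}(\overline{\cF(E)})$ rather than to the closed ideal $\Lift(T)$. You also use weak-operator-topology compactness of bounded subsets of $\cB(E,E^n)$ in place of the paper's weak$^*$-compactness via $\cB(E,E^n)\cong(E\widehat{\otimes}_\pi(E^n)^*)^*$, and these two topologies agree on bounded sets.
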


We note that condition~\ref{thm:C:i} holds when $E$ is isomorphic to a Hilbert space or to $\ell_1(\Gamma)$ for some index set~$\Gamma$; see Proposition~\ref{prop:examplesCi} for details. 

A different argument applies when $E$ has enough coordinate projections.

\begin{maintheorem}\label{thm:D}
Let $E$ be a separable Banach space with an unconditional Schauder decomposition into a countably infinite sequence of non-zero subspaces.  Then $\cB(E)$ contains $2^{\mathfrak{c}}$ maximal right ideals, and $2^{\mathfrak{c}}$ of them are not finitely generated, where $\mathfrak{c}=2^{\aleph_0}$.
\end{maintheorem}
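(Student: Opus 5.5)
We need to prove Theorem D: with an unconditional Schauder decomposition $E=\bigoplus_{j\in\N}E_j$, $\cB(E)$ has $2^{\mathfrak c}$ maximal right ideals, and $2^{\mathfrak c}$ of them are not finitely generated.

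The plan is to produce many maximal right ideals from ultrafilters on $\N$ and then count. For $A\subseteq\N$ let $P_A$ be the coordinate projection onto $\overline{\operatorname{span}}\bigcup_{j\in A}E_j$. Unconditionality makes these uniformly bounded, and $A\mapsto P_A$ is a Boolean algebra homomorphism from $\mathcal P(\N)$ into the idempotents of $\cB(E)$ with $P_AP_B=P_{A\cap B}$. For a free ultrafilter $\mathcal U$ on $\N$, I would consider the right ideal
\[
\mathcal J_{\mathcal U}=\Bigl\{\sum_{i=1}^k P_{A_i}U_i : k\in\N,\ \N\setminus A_i\in\mathcal U,\ U_i\in\cB(E)\Bigr\}.
\]
Since $\mathcal U$ is closed under finite intersections, $P_{A_1}U_1+\dots+P_{A_k}U_k=P_AV$ for some $A\notin\mathcal U$: take $A=\bigcup A_i$ and $V=\sum_i P_{A_i}U_i$, using $P_AP_{A_i}=P_{A_i}$. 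So $\mathcal J_{\mathcal U}=\{P_AU:A\notin\mathcal U\}$.

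This ideal is proper. If $I=P_AU$ with $A\notin\mathcal U$, then $P_{\N\setminus A}=P_{\N\setminus A}P_AU=0$. But $\N\setminus A\in\mathcal U$ is non-empty and each $E_j$ is non-zero, so this is impossible. By Zorn's lemma, $\mathcal J_{\mathcal U}$ is contained in some maximal right ideal $\mathcal R_{\mathcal U}$.

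Distinct ultrafilters give distinct maximal ideals. If $\mathcal U\ne\mathcal V$, pick $A\in\mathcal U\setminus\mathcal V$. Then $P_A\in\mathcal J_{\mathcal V}$ and $P_{\N\setminus A}\in\mathcal J_{\mathcal U}$. A single maximal right ideal containing both $\mathcal J_{\mathcal U}$ and $\mathcal J_{\mathcal V}$ would contain $P_A+P_{\N\setminus A}=I$, which is absurd. Hence $\mathcal R_{\mathcal U}\ne\mathcal R_{\mathcal V}$. Since there are $2^{\mathfrak c}$ free ultrafilters on $\N$, we get at least $2^{\mathfrak c}$ maximal right ideals. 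For the upper bound, separability gives $|\cB(E)|=\mathfrak c$, so there are at most $2^{\mathfrak c}$ subsets of $\cB(E)$.

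The non-finitely-generated part follows by counting. By Theorem~\ref{thm:B}\ref{thm:B:i}, every finitely generated right ideal is $\Lift(T)$ for some $n$ and some $T\in\cB(E^n,E)$. Since $E^n$ is separable, there are only $\mathfrak c$ such pairs $(n,T)$, so at most $\mathfrak c$ maximal right ideals are finitely generated. Because $\mathfrak c<2^{\mathfrak c}$, removing them still leaves $2^{\mathfrak c}$ maximal right ideals that are not finitely generated. (Note that every $\mathcal R_{\mathcal U}$ contains each $P_{\{j\}}$, hence is not fixed by Proposition~\ref{prop:fixed}, but the counting argument alone is enough here.)

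The main step needing care is the uniform boundedness and Boolean behaviour of the $P_A$ for infinite $A$. This is exactly where the unconditionality of the decomposition is used: it guarantees that $\sum_{j\in A}Q_jx$ converges, where $Q_j$ is the $j$-th coordinate projection. Everything else is routine cardinal arithmetic, together with the standard fact that $\beta\N\setminus\N$ has cardinality $2^{\mathfrak c}$.
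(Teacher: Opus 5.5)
Your proof is correct and follows essentially the same route as the paper. The coordinate projections $P_A$ form a Boolean family, and each ultrafilter $\mathcal U$ yields a proper right ideal generated by $\{P_A : A\notin\mathcal U\}$; distinct ultrafilters give distinct maximal right ideals because $P_A+P_{\N\setminus A}=I$, and the count uses $|\cB(E)|=\mathfrak c$. The differences are cosmetic. You bound the number of finitely generated right ideals through the $\Lift(T)$ representation rather than by counting finite subsets of $\cB(E)$, and you restrict to free ultrafilters. Your side remark on non-fixedness is not needed, and it needs one more step beyond Proposition~\ref{prop:fixed}: a functional vanishing on every $E_j$ must be zero.
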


We also record three complementary mechanisms.  First,
infinite-dimensional commutative quotients pull non-finitely-generated
maximal ideals back to the original algebra.  Secondly, reflexivity
transfers White's left-sided result to the right by the adjoint
anti-isomorphism.  Thirdly, on an infinite-dimensional
scalar-plus-inessential space, the inessential operators form the
unique non-fixed maximal right ideal; in the infinite-dimensional
scalar-plus-compact case, this ideal cannot be finitely generated.

No claim is made here that the general conjecture, or the case of $\cB(E)$ for arbitrary $E$, has been solved.  The exact unresolved geometric question is stated in Section~\ref{sec:remaining}.

\section{Preliminaries}\label{S:prelim}

\noindent All Banach spaces and Banach algebras are complex.  Algebraic finite generation is meant throughout; no closure is inserted into a generated ideal. The letter~$E$ always denotes a Banach space and~$E^*$ its topological dual. For $n\in\N$, we equip~$E^n$ with the $\ell_p$-norm for some $p\in[1,\infty]$; the precise choice of~$p$ does not matter because the $\ell_p$-norms are equivalent, and all our results are isomorphic in nature. For $1\leqslant j\leqslant n$, $\iota_j\colon E\to E^n$ and $\pi_j\colon E^n\to E$ denote the $j^{\text{th}}$ coordinate embedding and projection, respectively.

Let~$E$ and~$F$ be Banach spaces. We write~$\cB(E,F)$ for the space of bounded, linear operators $E\to F$, abbreviated $\cB(E)$ when $E=F$. We say that an operator $T\in\cB(E,F)$ is \emph{right invertible} if it has a bounded linear right inverse $F\to E$. It is an elementary fact that this is equivalent to~$T$ being surjective with its kernel $\ker T$ being complemented in~$E$.

Given $y\in F$ and $\lambda\in E^*$, the rank-one operator $y\otimes\lambda\in\cB(E,F)$ is defined by
\[
  (y\otimes\lambda)(x)=\lambda(x)y
  \qquad (x\in E).
\]
The symbols $\cA$, $\cF$, $\cK$ and $\cS$ denote, respectively, the operator ideals of approximable, finite-rank, compact and strictly singular operators.

We begin with a few basic facts which are useful for keeping the algebraic and topological assertions separate. They are well known; we include  their (short) proofs for ease of reference.

\begin{lemma}\label{lem:max-closed}
Every maximal right ideal of a unital Banach algebra is norm closed.
\end{lemma}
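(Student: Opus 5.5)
The argument rests on two standard facts about a unital Banach algebra $A$: the closure of a right ideal is again a right ideal, and a proper right ideal never meets the open unit ball centred at the identity. Throughout, let $\mathcal R$ be a maximal right ideal of $A$ and write $\overline{\mathcal R}$ for its norm closure.

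\emph{Step 1: $\overline{\mathcal R}$ is a right ideal.} Addition is continuous, and so is multiplication $a\mapsto ab$ for each fixed $b\in A$. Hence $\overline{\mathcal R}$ is a linear subspace with $\overline{\mathcal R}\,A\subseteq\overline{\mathcal R}$. Concretely, if $r_k\to r$ with $r_k\in\mathcal R$, then $r_kb\to rb$ and $r_kb\in\mathcal R$, so $rb\in\overline{\mathcal R}$.

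\emph{Step 2: $\overline{\mathcal R}$ is proper.} Suppose $a\in A$ satisfies $\|1-a\|<1$. Then the Neumann series $\sum_{k\geqslant0}(1-a)^k$ converges and gives an inverse of $a$. So $a$ is invertible, and in particular right invertible.

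If such an $a$ lay in $\mathcal R$, we would have $1=a\,a^{-1}\in\mathcal R A\subseteq\mathcal R$. Then $\mathcal R=A$, contradicting that maximal right ideals are proper.

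So $\mathcal R$ is disjoint from the open ball $\{a:\|1-a\|<1\}$. This ball is an open set, so $\overline{\mathcal R}$ is disjoint from it as well. In particular $1\notin\overline{\mathcal R}$, and $\overline{\mathcal R}$ is proper.

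\emph{Step 3: conclusion.} We have $\mathcal R\subseteq\overline{\mathcal R}\subsetneq A$, and $\overline{\mathcal R}$ is a right ideal by Step 1. Maximality of $\mathcal R$ therefore forces $\overline{\mathcal R}=\mathcal R$, so $\mathcal R$ is norm closed.

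The only substantive point is Step 2, namely the Neumann-series argument that elements near $1$ are invertible. It needs completeness of $A$ and that the unit has norm $1$. If one does not assume $\|1\|=1$, the same argument works for any element $a$ with $\|1-a\|$ sufficiently small, which is enough to keep $1$ out of $\overline{\mathcal R}$.
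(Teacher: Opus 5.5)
Your proof is correct and is essentially the paper's argument: the closure of $\mathcal R$ is a right ideal, elements within distance $1$ of the identity are invertible and so cannot lie in a proper right ideal, and maximality then forces $\overline{\mathcal R}=\mathcal R$. The only difference is that you argue directly, while the paper argues by contradiction. One small correction to your closing remark: the Neumann series converges whenever $\|1-a\|<1$ by submultiplicativity alone, so the normalisation $\|1\|=1$ is not needed.
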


\begin{proof} Assume towards a contradiction that~$A$ is  a unital Banach algebra which contains a~maximal right ideal~$M$ that is not closed. Continuity of the algebra operations implies that its closure~$\overline{M}$ is also a right ideal, so $\overline{M}=A$ by maximality of~$M$.  Choose $m\in M$ such that $\|1-m\|<1$.  Then $m$ is invertible, and hence $1=mm^{-1}\in M$, a contradiction.
\end{proof}

\begin{proposition}\label{prop:opposite}
The left- and right-sided Dales--\.{Z}elazko conjectures are equivalent.  

More precisely, a subset $M$ of a unital Banach algebra~$A$ is a maximal right ideal of $A$ if and only if it is a maximal left ideal of $A^{\op}$, and
\[
  M=a_1A+\cdots+a_nA
\]
if and only if
\[
  M=A^{\op}a_1+\cdots+A^{\op}a_n
\]
in the opposite algebra.
\end{proposition}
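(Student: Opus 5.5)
The plan is to work directly from the definition of the opposite algebra $A^{\op}$. It has the same underlying Banach space as $A$ and the same unit, with product $a\cdot_{\op} b = ba$. This is again a unital Banach algebra, since $\|a\cdot_{\op} b\| = \|ba\|\leqslant\|a\|\|b\|$, and $(A^{\op})^{\op}=A$. Both parts of the proposition will come from a single observation: for a subset $M\subseteq A$, the condition $MA\subseteq M$ (products computed in $A$) is literally the same as $A^{\op}\cdot_{\op}M\subseteq M$. The additive subgroup and complex subspace conditions do not mention the product, so they are unchanged. Hence the right ideals of $A$ are exactly the left ideals of $A^{\op}$, as subsets of the common underlying set.

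Maximality is defined purely in terms of inclusion among proper one-sided ideals. Since the families of proper right ideals of $A$ and proper left ideals of $A^{\op}$ coincide as collections of subsets, their maximal elements coincide too. For finite generation, I will compute
\[
a_1A+\cdots+a_nA=\{a_1b_1+\cdots+a_nb_n : b_j\in A\}=\{b_1\cdot_{\op}a_1+\cdots+b_n\cdot_{\op}a_n: b_j\in A^{\op}\},
\]
which is precisely $A^{\op}a_1+\cdots+A^{\op}a_n$ formed in $A^{\op}$. Because the algebra is unital, the right ideal generated by $a_1,\dots,a_n$ is exactly $a_1A+\cdots+a_nA$ (algebraic generation, no closure). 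So ``$M$ is finitely generated by $a_1,\dots,a_n$'' transfers verbatim between the two settings.

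For the equivalence of the conjectures, suppose the left-sided conjecture holds for all unital Banach algebras, and let $A$ be one in which every maximal right ideal is finitely generated. Then every maximal left ideal of $A^{\op}$ is finitely generated, so $A^{\op}$ is finite-dimensional, and hence so is $A$, since they share the same underlying vector space. The converse follows by the same argument applied with $A$ and $A^{\op}$ interchanged, using $(A^{\op})^{\op}=A$. There is no real obstacle here. The only point needing care is checking that $A^{\op}$ is a unital Banach algebra (submultiplicativity and the same unit), so that the conjecture applies to it.
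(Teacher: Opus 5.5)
Your proposal is correct and follows the paper's proof: the identity $A^{\op}a_j=a_jA$ (since $b\cdot_{\op}a_j=a_jb$) shows that right ideals of $A$ and left ideals of $A^{\op}$ coincide as subsets, and maximality, finite generation and the equivalence of the two conjectures follow at once. You spell out the routine checks that the paper leaves implicit, namely submultiplicativity of the opposite product, $(A^{\op})^{\op}=A$, and the transfer of the conjecture in both directions.
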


\begin{proof}
The product in $A^{\op}$ is $(a,b)\mapsto ba$.  Hence $A^{\op}a_j=\{a_jb:b\in A\}=a_jA$.  The assertions about ideals, maximality and finite generation follow immediately.
\end{proof}

\begin{proposition}[Pullback principle]\label{prop:pullback}
Let $\pi\colon A\to B$ be a surjective homomorphism of unital algebras,
and let $M$ be a maximal right ideal of~$B$.  Then $\pi^{-1}(M)$ is a
maximal right ideal of~$A$.  If $\pi^{-1}(M)$ is finitely generated,
then $M$ is finitely generated.  Consequently, non-finite generation
pulls back along~$\pi$.
\end{proposition}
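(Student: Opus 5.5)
The plan is to use the lattice correspondence between right ideals of $B$ and right ideals of $A$ containing $\ker\pi$, and then push generators forward along $\pi$.

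\emph{Step 1: $\pi^{-1}(M)$ is a proper right ideal.} Since $\pi$ is a surjective homomorphism, $\pi^{-1}(M)$ is an additive subgroup (indeed a subspace) of $A$. For $a\in\pi^{-1}(M)$ and $c\in A$ we have $\pi(ac)=\pi(a)\pi(c)\in M$, because $M$ is a right ideal. Surjectivity and unitality give $\pi(1_A)=1_B$. Indeed, $\pi(1_A)b=b$ for every $b=\pi(a)$, so $\pi(1_A)=\pi(1_A)1_B=1_B$. Since $1_B\notin M$, we get $1_A\notin\pi^{-1}(M)$, so the ideal is proper.

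\emph{Step 2: maximality.} Let $N$ be a right ideal of $A$ with $\pi^{-1}(M)\subseteq N$. Then $\pi(N)$ is a right ideal of $B$, again by surjectivity, since $\pi(n)\pi(c)=\pi(nc)$. Moreover $\pi(N)\supseteq\pi(\pi^{-1}(M))=M$. By maximality of $M$, either $\pi(N)=M$ or $\pi(N)=B$.

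In the first case, $N\subseteq\pi^{-1}(\pi(N))=\pi^{-1}(M)$, so $N=\pi^{-1}(M)$.

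In the second case, pick $n\in N$ with $\pi(n)=1_B$. Then $\pi(1_A-n)=0\in M$, so $1_A-n\in\pi^{-1}(M)\subseteq N$. Hence $1_A\in N$ and $N=A$.

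The only point needing care is exactly this use of $\ker\pi\subseteq\pi^{-1}(M)$, which holds because $0\in M$.

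\emph{Step 3: finite generation descends.} Suppose $\pi^{-1}(M)=a_1A+\cdots+a_nA$. Applying $\pi$ and using surjectivity,
\[
M=\pi(\pi^{-1}(M))=\pi(a_1)\pi(A)+\cdots+\pi(a_n)\pi(A)=\pi(a_1)B+\cdots+\pi(a_n)B.
\]
Here $M=\pi(\pi^{-1}(M))$ holds because $\pi$ is onto. Thus $M$ is finitely generated.

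The final sentence of the statement is the contrapositive: if $M$ is not finitely generated, then neither is the maximal right ideal $\pi^{-1}(M)$.

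No step is a genuine obstacle. The mildest subtlety is confirming $\pi(1_A)=1_B$, which is handled in Step 1.
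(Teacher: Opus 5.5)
Your proof is correct and follows essentially the same route as the paper. You push a right ideal $N\supseteq\pi^{-1}(M)$ forward, lift $1_B$ to some $n\in N$, use $1_A-n\in\ker\pi\subseteq\pi^{-1}(M)$, and then send the generators $a_j$ to generators $\pi(a_j)$ of $M$. The differences are minor: you verify unitality of $\pi$ and properness of $\pi^{-1}(M)$ explicitly, which the paper leaves largely implicit, and you split into the cases $\pi(N)=M$ or $\pi(N)=B$ rather than choosing an element of $N\setminus\pi^{-1}(M)$.
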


\begin{proof}
A surjective homomorphism between unital algebras is automatically
unital.  Let $\mathcal R$ be a right ideal properly containing
$\pi^{-1}(M)$, and choose $x\in\mathcal R\setminus\pi^{-1}(M)$.  Then
$\pi(x)\notin M$, so $\pi(\mathcal R)$ properly contains~$M$ and is
therefore equal to~$B$.  Choose $y\in\mathcal R$ with $\pi(y)=1_B$.
Since $1_A-y\in\ker\pi\subseteq\pi^{-1}(M)\subseteq\mathcal R$, we
have $1_A\in\mathcal R$, and hence $\mathcal R=A$.

If $\pi^{-1}(M)=a_1A+\cdots+a_nA$, then surjectivity gives
$M=\pi(a_1)B+\cdots+\pi(a_n)B$.
\end{proof}

For a Banach algebra~$A$, let $\Com(A)$ denote the closed two-sided
ideal generated by the commutators $ab-ba$.

\begin{corollary}\label{cor:comm-quotient}
For a unital Banach algebra~$A$, the following conditions are
equivalent:
\begin{enumerate}[label={\rm(\roman*)}]
\item $A$ has an infinite-dimensional commutative Banach-algebra
quotient;
\item the abelianisation $A/\Com(A)$ is infinite-dimensional.
\end{enumerate}
Whenever these conditions hold, $A$ has a maximal right ideal which is
not finitely generated.
\end{corollary}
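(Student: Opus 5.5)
We prove the equivalence first and then derive the consequence from it via the pullback principle (Proposition~\ref{prop:pullback}).

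\emph{(i)$\Rightarrow$(ii).} Let $\pi\colon A\to B$ be a surjective homomorphism onto an infinite-dimensional commutative Banach algebra~$B$. We may take $\pi$ continuous: if $B=A/J$ for a closed ideal~$J$, the quotient map is continuous. Since $B$ is commutative, $\pi(ab-ba)=0$. So $\ker\pi$ contains all commutators, and being a closed two-sided ideal it contains $\Com(A)$. Hence $\pi$ factors through $A/\Com(A)$, which therefore surjects onto~$B$ and is infinite-dimensional.

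\emph{(ii)$\Rightarrow$(i).} Here $A/\Com(A)$ is itself a Banach algebra, because $\Com(A)$ is a closed two-sided ideal. It is commutative, since every image of $ab-ba$ vanishes, and it is infinite-dimensional by hypothesis.

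\emph{The consequence.} Let $B=A/\Com(A)$, a unital commutative Banach algebra that is infinite-dimensional. It suffices to produce a maximal ideal of~$B$ that is not finitely generated. Then Proposition~\ref{prop:pullback}, applied to the quotient map $A\to B$, gives a maximal right ideal of~$A$ that is not finitely generated. In a commutative algebra, right ideals are ideals, so the task is the commutative case of the Dales--\.{Z}elazko conjecture. Here we invoke the known theorem (Dales--\.{Z}elazko; in the commutative case this goes back to results of Gleason and Ferreira--Tomassini): a unital commutative Banach algebra in which every maximal ideal is finitely generated is finite-dimensional.

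The main obstacle is this commutative theorem, and I would cite it rather than reprove it. If a self-contained sketch were wanted, I would proceed as follows. Suppose every maximal ideal $M_\varphi=\ker\varphi$ is finitely generated. Being finitely generated and of codimension one, each is closed. A Gleason-type argument then gives the character space an analytic-variety structure near each point. Compactness of the character space together with finite generation forces the character space to be finite. The algebra then splits, via the holomorphic functional calculus, into finitely many local algebras. In each local algebra the maximal ideal $M$ is finitely generated, so by a Nakayama-type argument in the Banach setting $M^k/M^{k+1}$ is finite-dimensional. Hence $M$ must be nilpotent, and so $B$ is finite-dimensional. For this note, citing the theorem is the intended route.
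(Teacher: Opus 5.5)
Your proof is correct and follows the paper's argument essentially step for step: (ii)$\Rightarrow$(i) is immediate. For (i)$\Rightarrow$(ii) you observe that $\Com(A)\subseteq\ker\pi$ and factor $\pi$ through $A/\Com(A)$. The final assertion comes from the commutative case of the Dales--\.{Z}elazko conjecture (Ferreira--Tomassini), combined with the pullback principle, Proposition~\ref{prop:pullback}. Your optional sketch of the commutative theorem is loose; for instance, finite-dimensionality of each $M^k/M^{k+1}$ does not by itself force $M$ to be nilpotent. Since you cite the theorem, exactly as the paper does, this does not affect the proof.
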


\begin{proof}
The second condition plainly implies the first.  Conversely, suppose
that \mbox{$\rho\colon A\to B$} is a continuous surjective homomorphism onto
an infinite-dimensional commutative Banach algebra.  Then
$\Com(A)\subseteq\ker\rho$, so $\rho$ factors through a surjective
homomorphism $A/\Com(A)\to B$.  Hence $A/\Com(A)$ is
infinite-dimensional.

The commutative case of the Dales--\.{Z}elazko conjecture gives a
non-finitely-generated maximal ideal in $A/\Com(A)$, and
Proposition~\ref{prop:pullback} pulls it back to~$A$.
\end{proof}

\section{Fixed ideals and the dichotomy}

\begin{proposition}\label{prop:fixed}
Let $E$ be a Banach space. For each $0\neq\lambda\in E^*$, the set $\mathcal M^r_\lambda$ defined by~\eqref{eq:fixedideal} is a maximal right ideal which is generated by the idempotent operator $\id_E-y\otimes\lambda$ for any element $y\in E$ such that $\lambda(y)=1$. 
\end{proposition}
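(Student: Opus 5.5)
The plan is to show three things in turn: that $\mathcal M^r_\lambda$ is a right ideal, that it equals the principal right ideal $P\,\cB(E)$ for the idempotent $P=\id_E-y\otimes\lambda$, and that it is maximal.

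\emph{Right ideal and idempotent.} The set $\mathcal M^r_\lambda$ is clearly a linear subspace. It is a right ideal because $\lambda T=0$ implies $\lambda(TR)=(\lambda T)R=0$ for every $R\in\cB(E)$. It is proper, since $\lambda\id_E=\lambda\neq0$. Since $\lambda\neq0$, we may choose $y\in E$ with $\lambda(y)=1$, and put $P=\id_E-y\otimes\lambda$. Using $(y\otimes\lambda)^2=\lambda(y)\,y\otimes\lambda=y\otimes\lambda$, the operator $y\otimes\lambda$ is idempotent, and hence so is $P$. Moreover
\[
\lambda P=\lambda-\lambda(y)\lambda=0,
\]
so $P\in\mathcal M^r_\lambda$ and therefore $P\cB(E)\subseteq\mathcal M^r_\lambda$.

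\emph{Generation.} For the reverse inclusion, take $T\in\mathcal M^r_\lambda$. Then
\[
PT=T-y\otimes(\lambda T)=T,
\]
so $T\in P\cB(E)$. Hence $\mathcal M^r_\lambda=P\cB(E)$, which shows that $\mathcal M^r_\lambda$ is generated by the single idempotent $P$.

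\emph{Maximality.} Let $\mathcal R$ be a right ideal with $\mathcal M^r_\lambda\subsetneq\mathcal R$, and pick $S\in\mathcal R$ with $\lambda S\neq0$. Choose $x\in E$ with $\lambda(Sx)=1$. Then
\[
S(x\otimes\lambda)=(Sx)\otimes\lambda\in\mathcal R,
\]
and this operator satisfies $\lambda\bigl((Sx)\otimes\lambda\bigr)=\lambda(Sx)\,\lambda=\lambda$. It follows that $\id_E-(Sx)\otimes\lambda\in\mathcal M^r_\lambda\subseteq\mathcal R$. Adding the two operators gives $\id_E\in\mathcal R$, so $\mathcal R=\cB(E)$.

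No step is a real obstacle. The only point needing care is the maximality argument: we need a rank-one multiple of $S$ whose image under $\lambda$ is exactly $\lambda$. The choice of $x$ with $\lambda(Sx)=1$ provides this.
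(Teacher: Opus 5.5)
Your proof is correct and follows the paper's argument essentially step for step: the same idempotent $P$ with $\lambda P=0$ and $PT=T$ for generation, and the same maximality trick of choosing $x$ with $\lambda(Sx)=1$ so that $\id_E-S(x\otimes\lambda)\in\mathcal M^r_\lambda$. You additionally verify that $\mathcal M^r_\lambda$ is a proper right ideal and that $P$ is idempotent, which the paper leaves implicit.
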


\begin{proof} The fact that $\lambda(y)=1$ implies that the operator $P=\id_E-y\otimes\lambda\in\cB(E)$ is idempotent and satisfies $\lambda P=0$, so $P\in\mathcal M^r_\lambda$. On the other hand, for each $T\in\mathcal M^r_\lambda$, we have
\[
  P T=T-y\otimes(\lambda T)=T.
\]
This shows that $\mathcal M^r_\lambda$ is the right ideal generated by~$P$. 

To verify the maximality of~$\mathcal M^r_\lambda$, suppose that~$\mathcal R$ is a right ideal which properly contains~$\mathcal M^r_\lambda$, and take $T\in \mathcal R\setminus\mathcal M^r_\lambda$.  Then we can find $x\in E$ such that $\lambda(Tx)=1$.  Consequently,
\[
  \lambda\bigl(\id_E-T(x\otimes\lambda)\bigr)=0,
\]
so $\id_E-T(x\otimes\lambda)\in\mathcal M^r_\lambda\subseteq\mathcal R$.  Since $T(x\otimes\lambda)\in\mathcal R$, we conclude that $\id_E\in\mathcal R$, and therefore $\mathcal R=\cB(E)$.
\end{proof}

\begin{proof}[Proof of Theorem~\ref{thm:A}]
Assume that $\cF(E)\nsubseteq \mathcal R$.  Since the rank-one operators span $\cF(E)$, we can find non-zero elements $x\in E$ and $\lambda\in E^*$ such that $x\otimes\lambda\notin \mathcal R$.  Maximality of~$\mathcal R$ gives an operator $T\in\cB(E)$ such that
\(
  \id_E-(x\otimes\lambda)T\in \mathcal R.
\)
Put $\mu=\lambda T\in E^*$.  Then $(x\otimes\lambda)T=x\otimes\mu$, and $\mu\neq0$, since otherwise $\id_E\in \mathcal R$.

Choose $y\in E$ with $\mu(y)=1$, and set $P=\id_E-y\otimes\mu$.  Since $\mu P=0$, we have
\[
P=(\id_E-x\otimes\mu)P=\bigl(\id_E-(x\otimes\lambda)T\bigr)P\in\mathcal R.
\]
Proposition~\ref{prop:fixed} gives
\[
  \mathcal M^r_\mu=P\cB(E)\subseteq \mathcal R,
\]
so $\mathcal R=\mathcal M^r_\mu$ by maximality of~$\mathcal M^r_\mu$.

The alternatives are mutually exclusive: if $0\neq\mu\in E^*$, choose $z\in E$ with $\mu(z)\neq0$. Then an operator of rank one with range $\C z$ does not belong to $\mathcal M^r_\mu$.
\end{proof}

\begin{corollary}\label{cor:fixed-implies-DZ}
Let $E$ be an infinite-dimensional Banach space.  If every finitely generated maximal right ideal of $\cB(E)$ is fixed, then $\cB(E)$ contains a maximal right ideal which is not finitely generated.
\end{corollary}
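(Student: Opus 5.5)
The idea is to produce a non-fixed maximal right ideal and observe that, by hypothesis, it cannot be finitely generated. Concretely, we look for a maximal right ideal of $\cB(E)$ which contains $\cF(E)$. Such an ideal is not of the form $\mathcal M^r_\lambda$, because Theorem~\ref{thm:A} says the two alternatives are mutually exclusive. If it were finitely generated, the hypothesis would force it to be fixed, which is a contradiction. So everything comes down to showing that some maximal right ideal of $\cB(E)$ contains $\cF(E)$.

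First, consider the right ideal $\cF(E)$ itself; it is in fact a two-sided ideal. We check that it is proper. Since $E$ is infinite-dimensional, $\id_E$ does not have finite rank, so $\id_E\notin\cF(E)$.

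Next, we apply a standard Zorn's lemma argument in the unital algebra $\cB(E)$. Take the collection of proper right ideals containing $\cF(E)$, ordered by inclusion. It is non-empty, since it contains $\cF(E)$. The union of a chain in this collection is again a right ideal containing $\cF(E)$. It is proper because none of the ideals in the chain contains $\id_E$, so neither does the union. Zorn's lemma therefore gives a maximal element $\mathcal R$ of the collection. This $\mathcal R$ is a maximal right ideal of $\cB(E)$: any right ideal strictly larger than $\mathcal R$ still contains $\cF(E)$, so by maximality within the collection it cannot be proper, and hence it equals $\cB(E)$.

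Finally, $\mathcal R$ satisfies alternative (ii) of Theorem~\ref{thm:A}, so it does not satisfy (i); that is, $\mathcal R$ is not fixed. By hypothesis every finitely generated maximal right ideal is fixed, so $\mathcal R$ is not finitely generated.

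There is no real obstacle in this argument. The only points needing care are that properness of $\cF(E)$ uses the infinite-dimensionality of $E$, and that the mutual exclusivity in Theorem~\ref{thm:A}, proved above, is exactly what rules out $\mathcal R$ being fixed.
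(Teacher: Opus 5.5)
Your proof is correct and takes the same route as the paper: you embed the proper ideal $\cF(E)$ in a maximal right ideal, note that this ideal is non-fixed by the mutual exclusivity in Theorem~\ref{thm:A}, and conclude from the hypothesis that it is not finitely generated. The only difference is that you write out the Zorn's lemma step, which the paper leaves implicit.
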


\begin{proof}
The proper two-sided ideal $\cF(E)$ of~$\cB(E)$ is contained in some maximal right ideal~$\mathcal R$.  This ideal is non-fixed by Theorem~\ref{thm:A}, and hence is not finitely generated by hypothesis.
\end{proof}

In analogy with the left-sided case, this dichotomy has a stronger form \cite[Corollary~4.1]{DKKKL}. Its proof carries over almost verbatim. Before giving the details, we recall the key notion and background results required.

An operator $T\in\cB(E,F)$ between Banach spaces~$E$ and~$F$ is \emph{inessential} if $\id_E - ST$ is a~Fredholm operator for every operator \mbox{$S\in\cB(F,E)$}. (We remark that this condition is left-right symmetric because $\id_E - ST$ is a Fredholm operator if and only if $\id_F - TS$ is a Fredholm operator.) We write  $\mathcal E(E,F)$ for the set of inessential operators from~$E$ to~$F$. This defines a closed operator ideal in the sense of Pietsch, who gave this definition~\cite{Pietsch}. It generalises Kleinecke's original definition~\cite{kl} of the ideal of inessential operators, which applied only when $E=F$, defining~$\mathcal E(E)$ as the pre\-image of the Jacobson radical~$\operatorname{rad}\bigl(\cB(E)/\cA(E)\bigr)$ of the quotient algebra~$\cB(E)/\cA(E)$. This was motivated by Yood's observation~\cite[p.~615]{yo} that this radical may be non-zero. 

\begin{corollary}[Strong right-sided dichotomy]\label{cor:strictly-singular}
Let $E$ be an infinite-dimensional Banach space. Then every non-fixed maximal right ideal of $\cB(E)$ contains the ideal $\mathcal E(E)$ of inessential operators.
\end{corollary}

\begin{proof}
  Let $\mathcal R$ be a non-fixed maximal right ideal of $\cB(E)$. Then $\cF(E)\subseteq \mathcal R$ by Theorem~\ref{thm:A}, and therefore $\cA(E)\subseteq \mathcal R$ by Lemma~\ref{lem:max-closed}. This implies that $\pi(\mathcal R)$ is a maximal right ideal of~$\cB(E)/\cA(E)$, where $\pi\colon\cB(E)\to\cB(E)/\cA(E)$ denotes the quotient homomorphism, so $\operatorname{rad}\bigl(\cB(E)/\cA(E)\bigr)\subseteq \pi(\mathcal R)$ by the standard characterization of the Jacobson radical of a~unital algebra as the intersection of its maximal right ideals. In conclusion, we have
  \[ \mathcal E(E) = \pi^{-1}\bigl[\operatorname{rad}\bigl(\cB(E)/\cA(E)\bigr)\bigr]\subseteq \pi^{-1}[\pi(\mathcal R)] = \mathcal R. \qedhere \] 
\end{proof}

\begin{corollary}\label{cor:approximable}
Every non-fixed maximal right ideal of $\cB(E)$ contains the ideals of approximable, compact and strictly singular operators on~$E$.
\end{corollary}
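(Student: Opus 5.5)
The plan is to reduce everything to Corollary~\ref{cor:strictly-singular} together with the standard inclusions among the operator ideals involved. I would first dispose of the finite-dimensional case. If $E$ is finite-dimensional, then $\cF(E)=\cB(E)$. Every maximal right ideal is proper, so by Theorem~\ref{thm:A} it is fixed; hence there are no non-fixed maximal right ideals and the statement is vacuous. (If $E=\{0\}$, then $\cB(E)$ is the zero algebra, which has no proper right ideals containing\ldots{} in any case there is nothing to prove.)

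From now on, assume that $E$ is infinite-dimensional, and let $\mathcal R$ be a non-fixed maximal right ideal. By Corollary~\ref{cor:strictly-singular}, $\mathcal E(E)\subseteq\mathcal R$. It therefore suffices to recall the chain
\[
\cA(E)\subseteq\cK(E)\subseteq\cS(E)\subseteq\mathcal E(E).
\]
The first inclusion holds because $\cK(E)$ is norm closed and contains $\cF(E)$. The second is the classical fact that compact operators are strictly singular: a compact operator cannot be bounded below on an infinite-dimensional subspace, since its restriction to that subspace would otherwise be an isomorphism onto a subspace whose closed unit ball is compact.

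The final inclusion $\cS(E)\subseteq\mathcal E(E)$ is the only step requiring a genuine argument, and I would cite it as the classical theorem of Kato. For $S\in\cB(E)$ and $T\in\cS(E)$, we have $ST\in\cS(E)$, and a strictly singular perturbation of a Fredholm operator (here $\id_E$) is Fredholm. Hence $\id_E-ST$ is Fredholm for every $S\in\cB(E)$, which says exactly that $T\in\mathcal E(E)$.

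Alternatively, for the approximable ideal one could bypass inessential operators entirely, using only Theorem~\ref{thm:A} and Lemma~\ref{lem:max-closed}: $\cF(E)\subseteq\mathcal R$ and $\mathcal R$ is closed, so $\cA(E)=\overline{\cF(E)}\subseteq\mathcal R$. I expect no real obstacle beyond invoking Kato's perturbation theorem correctly.
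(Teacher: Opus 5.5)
Your proposal is correct and follows the paper's proof essentially verbatim. The finite-dimensional case is vacuous by Theorem~\ref{thm:A}, and in the infinite-dimensional case Corollary~\ref{cor:strictly-singular} together with the chain $\cA(E)\subseteq\cK(E)\subseteq\cS(E)\subseteq\mathcal E(E)$ (Kato's theorem giving the last inclusion) yields the result. Your unfinished parenthetical about $E=\{0\}$ should simply say that the zero algebra has no proper, hence no maximal, right ideals.
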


\begin{proof}
If $E$ is finite-dimensional, then $\cF(E)=\cB(E)$, so
Theorem~\ref{thm:A} shows that every maximal right ideal is fixed and
the assertion is vacuous.  Suppose therefore that $E$ is
infinite-dimensional.  The result follows from
Corollary~\ref{cor:strictly-singular} since
\[
  \cA(E)\subseteq\cK(E)\subseteq\cS(E)\subseteq\mathcal E(E). \qedhere 
\]
\end{proof}

\section{Finitely generated right ideals as lifting ideals}

\begin{proof}[Proof of Theorem~\ref{thm:B}]
\ref{thm:B:i}.  First, we prove that $\Lift(T)$ is a  finitely generated right ideal for every $n\in\N$ and $T\in\cB(E^n,E)$ by showing that
\begin{equation}\label{eq:Lift-generators}
  \Lift(T)=\sum_{j=1}^n(T\iota_j)\cB(E).
\end{equation}
This is easy to check: On the one hand, the identity
$\sum_{j=1}^n\iota_j\pi_j=\id_{E^n}$ implies that 
\[   TU=\sum_{j=1}^n(T\iota_j)(\pi_jU)\in\sum_{j=1}^n(T\iota_j)\cB(E) \]
for every $U\in\cB(E,E^n)$, 
and on the other, given $U_1,\ldots,U_n\in\cB(E)$, we see that the operator $U=\sum_{j=1}^n\iota_jU_j\in\cB(E,E^n)$ satisfies
\[ \sum_{j=1}^n(T\iota_j)U_j=TU\in\Lift(T). \]

Second, we verify that every finitely generated right ideal is a lifting ideal: Given $n\in\N$ and $T_1,\ldots,T_n\in\cB(E)$, we define 
\[ T=\sum_{j=1}^nT_j\pi_j\in\cB(E^n,E). \] 
Then $T\iota_j = T_j$ for each $1\leqslant j\leqslant n$, so~\eqref{eq:Lift-generators} implies that 
 \begin{equation*}
  \Lift(T) = \sum_{j=1}^nT_j\cB(E).
\end{equation*}

\ref{thm:B:ii}.  Suppose first that $T$ is surjective.  Given
$y\in E$ and $\lambda\in E^*$, choose $x\in E^n$ with $Tx=y$.  Then
$x\otimes\lambda\in\cF(E,E^n)$ and
\(
  y\otimes\lambda=T(x\otimes\lambda)\in\Lift(T).
\)
Thus $\cF(E)\subseteq\Lift(T)$.

Conversely, suppose that $\cF(E)\subseteq\Lift(T)$.  There is nothing
to prove if $E=\{0\}$, so choose $\lambda\in E^*$ and $x\in E$ with
$\lambda(x)=1$.  Given $y\in E$, choose
$U\in\cB(E,E^n)$ such that
\(
  y\otimes\lambda=TU.
\)
Evaluation at $x$ gives $y=TUx\in\ran T$, so $T$ is surjective.

\ref{thm:B:iii}.  Since $\Lift(T)$ is a right ideal,
$\Lift(T)=\cB(E)$ if and only if $\id_E\in\Lift(T)$, which is
precisely the assertion that $T$ is right invertible.

\ref{thm:B:iv}.  Since $T$ is not right invertible, $\Lift(T)$ is
proper by~\ref{thm:B:iii}.  For $S\in\cB(E)$, the right ideal
generated by $\Lift(T)$ and $S$ is
\[
  \Lift(T)+S\cB(E)
  =\{TU+SV:U\in\cB(E,E^n),\ V\in\cB(E)\}.
\]
It is equal to $\cB(E)$ if and only if there are
$U\in\cB(E,E^n)$ and $V\in\cB(E)$ such that
\(
  TU+SV=\id_E.
\)
This says exactly that the column operator
$x\mapsto(Ux,Vx)$ is a right inverse of $[T\ S]$.  The conclusion now
follows from the usual maximality criterion for proper right ideals.
\end{proof}
\begin{lemma}[Rows belonging to an operator ideal]
\label{lem:row-operator-ideal}
Let $\mathcal I$ be an operator ideal, let $E$ be a Banach space, and let
$T\in\cB(E^n,E)$.  The following conditions are equivalent:
\begin{enumerate}[label={\rm(\roman*)}]
\item $T\in\mathcal I(E^n,E)$;
\item $T\iota_j\in\mathcal I(E)$ for every $j=1,\ldots,n$;
\item $\Lift(T)\subseteq\mathcal I(E)$.
\end{enumerate}
Consequently, if $\Lift(T)=\mathcal I(E)$, then
$T\in\mathcal I(E^n,E)$.
\end{lemma}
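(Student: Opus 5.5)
I will prove the cycle (i)$\Rightarrow$(iii)$\Rightarrow$(ii)$\Rightarrow$(i), using only the defining properties of an operator ideal in the sense of Pietsch: each component $\mathcal I(X,Y)$ is a linear subspace of $\cB(X,Y)$, and $\mathcal I$ is stable under composition with bounded operators on either side.

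For (i)$\Rightarrow$(iii), let $TU\in\Lift(T)$ with $U\in\cB(E,E^n)$. The ideal property gives $TU\in\mathcal I(E)$ directly. For (iii)$\Rightarrow$(ii), observe that $\iota_j\in\cB(E,E^n)$, so $T\iota_j\in\Lift(T)\subseteq\mathcal I(E)$ for each $j$. This also follows from the representation~\eqref{eq:Lift-generators} in the proof of Theorem~\ref{thm:B}\ref{thm:B:i}.

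For (ii)$\Rightarrow$(i), use the identity $\sum_{j=1}^n\iota_j\pi_j=\id_{E^n}$ to write
\[ T=\sum_{j=1}^n(T\iota_j)\pi_j. \]
Each summand $(T\iota_j)\pi_j$ lies in $\mathcal I(E^n,E)$, because $T\iota_j\in\mathcal I(E)$ and $\mathcal I$ is stable under composition on the right by $\pi_j\in\cB(E^n,E)$. Since $\mathcal I(E^n,E)$ is a linear subspace, the finite sum $T$ also lies in $\mathcal I(E^n,E)$.

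The final assertion is immediate. If $\Lift(T)=\mathcal I(E)$, then in particular $\Lift(T)\subseteq\mathcal I(E)$, which is condition (iii), and (iii)$\Rightarrow$(i) gives $T\in\mathcal I(E^n,E)$.

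There is no real obstacle. The only point requiring care is (ii)$\Rightarrow$(i), which must use the linearity of the components $\mathcal I(E^n,E)$ and not merely the ideal property. This is part of the definition of an operator ideal, so nothing beyond the definitions is needed.
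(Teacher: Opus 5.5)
Your proposal is correct and matches the paper's proof: the same cycle (i)$\Rightarrow$(iii)$\Rightarrow$(ii)$\Rightarrow$(i), with the identity $T=\sum_{j=1}^n(T\iota_j)\pi_j$ used for the last implication. You also draw the final assertion directly from (iii)$\Rightarrow$(i), as the paper does.
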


\begin{proof}
If $T\in\mathcal I(E^n,E)$, then $TU\in\mathcal I(E)$ for every
$U\in\cB(E,E^n)$, so \rm(i) implies \rm(iii).  If \rm(iii) holds,
then $T\iota_j\in\Lift(T)\subseteq\mathcal I(E)$ for $1\leqslant j\leqslant n$, so \rm(iii) implies
\rm(ii).  Finally,
\[
  T=\sum_{j=1}^n(T\iota_j)\pi_j,
\]
so \rm(ii) implies \rm(i) because $\mathcal I$ is an operator ideal.
\end{proof}
\begin{proposition}\label{prop:individual-obstruction}
Let $E$ be a Banach space.  A subset $\mathcal R$ of $\cB(E)$ is a
non-fixed, finitely generated maximal right ideal if and only if
$\mathcal R=\Lift(T)$ for some $n\in\N$ and some surjective operator
$T\in\cB(E^n,E)$ such that $T$ is not right invertible and the row
operator
\[
  [T\ S]\colon E^n\oplus E\longrightarrow E,
  \qquad [T\ S](x,y)=Tx+Sy,
\]
is right invertible for every
$S\in\cB(E)\setminus\Lift(T)$.
\end{proposition}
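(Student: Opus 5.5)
This is essentially a repackaging of Theorem~\ref{thm:A} and Theorem~\ref{thm:B}, and I would prove the two implications separately.

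\emph{Forward direction.} Suppose that $\mathcal R$ is a non-fixed, finitely generated maximal right ideal. By Theorem~\ref{thm:B}\ref{thm:B:i} we can write $\mathcal R=\Lift(T)$ for some $n\in\N$ and $T\in\cB(E^n,E)$. Since $\mathcal R$ is maximal, it is proper, so $T$ is not right invertible by Theorem~\ref{thm:B}\ref{thm:B:iii}. Since $\mathcal R$ is non-fixed, Theorem~\ref{thm:A} gives $\cF(E)\subseteq\mathcal R$, and hence $T$ is surjective by Theorem~\ref{thm:B}\ref{thm:B:ii}. Finally, Theorem~\ref{thm:B}\ref{thm:B:iv} applies because $T$ is not right invertible. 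Maximality of $\Lift(T)$ therefore yields right invertibility of $[T\ S]$ for every $S\in\cB(E)\setminus\Lift(T)$.

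\emph{Reverse direction.} Suppose that $\mathcal R=\Lift(T)$ with $T$ surjective, not right invertible, and satisfying the row condition. Then $\mathcal R$ is a finitely generated right ideal by Theorem~\ref{thm:B}\ref{thm:B:i}. It is maximal by Theorem~\ref{thm:B}\ref{thm:B:iv}, using that $T$ is not right invertible. Surjectivity of $T$ gives $\cF(E)\subseteq\mathcal R$ by Theorem~\ref{thm:B}\ref{thm:B:ii}.

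It remains to exclude the fixed alternative. Theorem~\ref{thm:A} states that alternatives (i) and (ii) are mutually exclusive; concretely, no $\mathcal M^r_\lambda$ with $\lambda\neq0$ contains $\cF(E)$. Hence $\mathcal R$ is non-fixed.

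\emph{Edge case.} The main point of care is $E=\{0\}$. In that case $\cB(E)=\{0\}$, every $T$ is right invertible, and $\cB(E)$ has no proper right ideals, so both sides of the equivalence are vacuously false and the statement holds trivially.

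No step is a genuine obstacle. The only subtle point is invoking the mutual exclusivity in Theorem~\ref{thm:A}, rather than the dichotomy itself, in the reverse direction.
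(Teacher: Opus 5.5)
Your proof is correct and follows the paper's argument essentially step for step. Both directions combine Theorem~\ref{thm:A} with parts \ref{thm:B:i}--\ref{thm:B:iv} of Theorem~\ref{thm:B} in the same way the paper does: the dichotomy gives the forward direction, and its mutual exclusivity shows the ideal is non-fixed in the reverse direction. Your separate treatment of $E=\{0\}$ is harmless but unnecessary, since the general argument already covers that case.
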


\begin{proof}
Suppose first that $\mathcal{R}=\Lift(T)$ for some operator~$T$ with the stated properties. Then Theorem~\ref{thm:B}\ref{thm:B:i} shows that~$\mathcal{R}$ is finitely generated.  It is maximal by Theorem~\ref{thm:B}\ref{thm:B:iv}, and it is non-fixed by
Theorem~\ref{thm:A} and Theorem~\ref{thm:B}\ref{thm:B:ii}.

Conversely, let $\mathcal R$ be a non-fixed, finitely generated maximal
right ideal.  By Theorem~\ref{thm:B}\ref{thm:B:i}, we may write
$\mathcal R=\Lift(T)$ for some $T\in\cB(E^n,E)$.  Theorem~\ref{thm:A}
and Theorem~\ref{thm:B}\ref{thm:B:ii} show that $T$ is surjective;
properness and Theorem~\ref{thm:B}\ref{thm:B:iii} show that it is not
right invertible; and Theorem~\ref{thm:B}\ref{thm:B:iv} gives the row
completion property.
\end{proof}

\begin{corollary}[Exact obstruction]\label{cor:exact-obstruction}
The following conditions are equivalent for a Banach space~$E$:
\begin{enumerate}[label={\rm(\roman*)}]
\item $\cB(E)$ contains a non-fixed, finitely generated maximal right
ideal;
\item for some $n\in\N$, there is a surjective operator
$T\in\cB(E^n,E)$ which is not right invertible and for which $[T\ S]$
is right invertible for every $S\in\cB(E)\setminus\Lift(T)$.
\end{enumerate}
When these conditions hold, $\Lift(T)$ is a non-fixed, finitely
generated maximal right ideal for every operator $T$ satisfying the
second condition, and $\mathcal E(E)\subseteq\Lift(T)$.
\end{corollary}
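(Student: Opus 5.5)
The plan is to read the corollary off Proposition~\ref{prop:individual-obstruction}, which already characterises non-fixed, finitely generated maximal right ideals exactly as the lifting ideals of operators satisfying condition~(ii). The extra inclusion then comes from Corollary~\ref{cor:strictly-singular}.

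First, for (i)$\Rightarrow$(ii), take a non-fixed, finitely generated maximal right ideal $\mathcal R$. The converse half of Proposition~\ref{prop:individual-obstruction} writes $\mathcal R=\Lift(T)$ for some $n\in\N$ and some $T\in\cB(E^n,E)$. This $T$ is surjective, is not right invertible, and has the row-completion property. That is precisely~(ii).

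For (ii)$\Rightarrow$(i), and simultaneously the first part of the final assertion, let $T$ be any operator as in~(ii). The forward half of Proposition~\ref{prop:individual-obstruction} shows that $\Lift(T)$ is a non-fixed, finitely generated maximal right ideal. Spelled out, this uses three earlier facts:
\begin{itemize}
\item $\Lift(T)$ is finitely generated by Theorem~\ref{thm:B}\ref{thm:B:i};
\item it is maximal by Theorem~\ref{thm:B}\ref{thm:B:iv}, which applies because $T$ is not right invertible;
\item it contains $\cF(E)$ by Theorem~\ref{thm:B}\ref{thm:B:ii}, and is therefore non-fixed by the mutual exclusivity in Theorem~\ref{thm:A}.
\end{itemize}
This proves that $\Lift(T)$ witnesses~(i) for every such $T$.

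It remains to show $\mathcal E(E)\subseteq\Lift(T)$, which I would do by splitting on $\dim E$. If $E$ is infinite-dimensional, Corollary~\ref{cor:strictly-singular} applies directly to the non-fixed maximal right ideal $\Lift(T)$. If $E$ is finite-dimensional, then $\cF(E)=\cB(E)$, so every surjection $T\colon E^n\to E$ has complemented kernel and is right invertible. Hence no $T$ satisfies~(ii), and the assertion is vacuous. Consistently with this, Theorem~\ref{thm:A} shows that every maximal right ideal is fixed in this case.

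The only point needing care is this finite-dimensional case, since Corollary~\ref{cor:strictly-singular} assumes infinite dimension. As shown, it is disposed of because condition~(ii) is impossible there; I expect no real obstacle.
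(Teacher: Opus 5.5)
Your proposal is correct and follows the paper's proof. You read the equivalence, and the first part of the final assertion, off Proposition~\ref{prop:individual-obstruction}, then obtain $\mathcal E(E)\subseteq\Lift(T)$ from Corollary~\ref{cor:strictly-singular} after noting that condition (ii) forces $E$ to be infinite-dimensional. One small fix: a surjection onto a finite-dimensional space is right invertible because every subspace of a finite-dimensional space is complemented, not because $\cF(E)=\cB(E)$; this does not affect the conclusion.
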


\begin{proof}
The equivalence follows from
Proposition~\ref{prop:individual-obstruction}.  When the equivalent
conditions hold, $E$ is infinite-dimensional, because every surjection
between finite-dimensional spaces has a bounded linear right inverse.
The final inclusion now follows from
Corollary~\ref{cor:strictly-singular}.
\end{proof}

There is one further analytic consequence of maximality which is sometimes useful, but it is weaker than right invertibility.

\begin{proposition}\label{prop:uniform-lifts}
Let $T\in\cB(E^n,E)$ for some Banach space~$E$ and some $n\in\N$, and suppose that the right ideal~$\Lift(T)$ is closed. Then there is a constant $C\geqslant1$ such that
\[
  \inf\bigl\{\lVert U\rVert : U\in\cB(E,E^n),\, TU=S\bigr\}\leqslant C\lVert S\rVert
\]
for every $S\in\Lift(T)$.
In particular this holds when $\Lift(T)$ is maximal.
\end{proposition}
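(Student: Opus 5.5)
The approach is an open mapping argument applied to the left multiplication operator. Define
\[
  L_T\colon \cB(E,E^n)\longrightarrow \cB(E),\qquad L_T(U)=TU .
\]
This map is bounded and linear, with $\lVert L_T\rVert\leqslant\lVert T\rVert$, and by the definition~\eqref{eq:Lift} its range is exactly $\Lift(T)$. Both $\cB(E,E^n)$ and $\cB(E)$ are Banach spaces under the operator norm. If $\Lift(T)$ is closed, then it is a Banach space in the norm inherited from $\cB(E)$, and $L_T$ is a bounded linear surjection of $\cB(E,E^n)$ onto it.

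The main step is to apply the open mapping theorem to this surjection. It yields $\delta>0$ such that the image under $L_T$ of the open unit ball of $\cB(E,E^n)$ contains $\{S\in\Lift(T):\lVert S\rVert<\delta\}$. Given $S\in\Lift(T)$ with $S\neq 0$ and any $\varepsilon>0$, rescale $S$ so that its norm lies below $\delta$. This produces $U$ with $TU=S$ and $\lVert U\rVert\leqslant (1+\varepsilon)\lVert S\rVert/\delta$. Hence the infimum is at most $\lVert S\rVert/\delta$, and we may take $C=\max\{1,1/\delta\}$. The case $S=0$ is trivial, since $U=0$ works.

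The final sentence follows from Lemma~\ref{lem:max-closed}. A maximal right ideal of the unital Banach algebra $\cB(E)$ is norm closed, so the hypothesis is satisfied whenever $\Lift(T)$ is maximal.

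There is one edge case. If $T$ is right invertible, then $\Lift(T)=\cB(E)$ is closed and the argument applies unchanged; the conclusion about maximality then simply does not arise. No step is a real obstacle. The only point needing care is to apply the open mapping theorem with the closed subspace $\Lift(T)$ as codomain, rather than all of $\cB(E)$. Note also that the infimum need not be attained, which is why the conclusion is stated as an inequality for the infimum and not for a chosen lift.
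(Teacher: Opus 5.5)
Your proposal is correct and matches the paper's proof: both apply the open mapping theorem to the bounded linear surjection $U\mapsto TU$ from $\cB(E,E^n)$ onto the closed subspace $\Lift(T)$, and both use Lemma~\ref{lem:max-closed} for the maximal case. Your rescaling step simply spells out the constant that the paper leaves implicit.
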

\begin{proof}
The definition~\eqref{eq:Lift} implies that the map $U\mapsto TU$ is a surjection of~$\cB(E,E^n)$ onto~$\Lift(T)$. It is clearly bounded and linear, so the existence of a constant satisfying the asserted estimate follows from the open mapping theorem.  Lemma~\ref{lem:max-closed} implies the final statement.
\end{proof}

\begin{remark}\label{rem:no-linear-selection}
Surjectivity of an operator from a Banach space~$E$ onto a Banach space~$F$ gives a constant $C>0$ such that every $y\in F$ has a preimage $x\in E$ with $\lVert x\rVert\leqslant C\lVert y\rVert$ by the open mapping theorem. However, these pointwise choices need not come from a globally defined bounded \emph{linear} right inverse $F\to E$.  Proposition~\ref{prop:uniform-lifts} likewise gives uniform control only for operators which are already liftable.  This is the precise gap in any argument which attempts to deduce right invertibility of~$T$ merely from maximality or closedness of $\Lift(T)$.
\end{remark}

\begin{remark}[Finite versus single generation]
\label{rem:principal}
Suppose that $E^n\cong E$ for some $n\geqslant2$.  Then every right
ideal of $\cB(E)$ which is generated by at most $n$ elements is
principal.

Indeed, after adjoining zero generators if necessary,
Theorem~\ref{thm:B}\ref{thm:B:i} shows that such a~right ideal has the
form
\(
  \mathcal R=\Lift(T)
\)
for some $T\in\cB(E^n,E)$.  Let
\(
  V\colon E\longrightarrow E^n
\)
be an isomorphism.  Every operator $U\in\cB(E,E^n)$ can be written
uniquely as
\[
  U=VA,
  \qquad A=V^{-1}U\in\cB(E).
\]
Consequently,
\[
  \mathcal R
  =\{TU:U\in\cB(E,E^n)\}
  =\{TVA:A\in\cB(E)\}
  =(TV)\cB(E).
\]
Thus $\mathcal R$ is generated by the single operator $TV$.

If, in addition, $\mathcal R$ is a non-fixed maximal right ideal, then
Theorem~\ref{thm:A} and
Theorem~\ref{thm:B}\ref{thm:B:ii} imply that $T$, and hence $TV$, is
surjective.  Since $\mathcal R$ is proper,
Theorem~\ref{thm:B}\ref{thm:B:iii} implies that $TV$ is not right
invertible.  Moreover, Theorem~\ref{thm:B}\ref{thm:B:iv} shows that
$\mathcal R=(TV)\cB(E)$ is maximal precisely when
\[
  [TV\ S]\colon E\oplus E\longrightarrow E
\]
is right invertible for every $S\in\cB(E)\setminus (TV)\cB(E)$.

In particular, if $E\cong E^2$, then $E\cong E^m$ for every
$m\in\N$, by induction.  Hence every finitely generated
right ideal of $\cB(E)$ is principal.  This observation does not,
however, imply that a surjective generator of a proper right ideal is
right invertible.
\end{remark}

Dales \emph{et al.}\ observed that the ideal $\mathcal W(E)$ of weakly
compact operators on a Banach space~$E$ may be proper and finitely
generated as a two-sided ideal, whereas it is finitely generated as a
left ideal only when $E$ is reflexive; see
\cite[Example~2.5 and Corollary~4.8]{DKKKL}.  The corresponding
right-sided statement takes the following form.

\begin{proposition}
\label{R:weaklycompactops}
Let $E$ be a Banach space.  The following conditions are equivalent:
\begin{enumerate}[label={\rm(\roman*)}]
\item\label{weaklycompact:right-fg}
the ideal $\mathcal W(E)$ is finitely generated as a
right ideal of $\cB(E)$;
\item\label{weaklycompact:right-principal}
the ideal $\mathcal W(E)$ is principal as a right ideal of $\cB(E)$;
\item\label{weaklycompact:all}
$\mathcal W(E)=\cB(E)$;
\item\label{weaklycompact:reflexive}
$E$ is reflexive.
\end{enumerate}
\end{proposition}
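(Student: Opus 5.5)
The implications \ref{weaklycompact:all}$\Rightarrow$\ref{weaklycompact:right-principal}$\Rightarrow$\ref{weaklycompact:right-fg} are trivial, since $\cB(E)=\id_E\cB(E)$. The equivalence \ref{weaklycompact:all}$\Leftrightarrow$\ref{weaklycompact:reflexive} is the classical fact that $\id_E$ is weakly compact precisely when the closed unit ball of~$E$ is weakly compact, that is, precisely when $E$ is reflexive. So the substance of the proof is \ref{weaklycompact:right-fg}$\Rightarrow$\ref{weaklycompact:all}.

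For this implication, suppose that $\mathcal W(E)$ is a finitely generated right ideal. Theorem~\ref{thm:B}\ref{thm:B:i} writes it as $\mathcal W(E)=\Lift(T)$ for some $n\in\N$ and $T\in\cB(E^n,E)$. Lemma~\ref{lem:row-operator-ideal}, applied with $\mathcal I=\mathcal W$, gives $T\in\mathcal W(E^n,E)$. Next I would use that $\mathcal W(E)$ contains every rank-one operator, so $\cF(E)\subseteq\Lift(T)$, and Theorem~\ref{thm:B}\ref{thm:B:ii} then shows that $T$ is surjective. (If $E=\{0\}$ there is nothing to prove.)

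The remaining step is to show that a weakly compact surjection $T\colon E^n\to E$ forces reflexivity of~$E$. By the open mapping theorem, $T$ maps the unit ball of~$E^n$ onto a set containing $\delta B_E$ for some $\delta>0$. Weak compactness of~$T$ means that $T(B_{E^n})$ is relatively weakly compact, so $B_E$ is relatively weakly compact. Since $B_E$ is convex and norm closed, it is weakly closed, hence weakly compact, so $E$ is reflexive and $\mathcal W(E)=\cB(E)$. This closes the cycle \ref{weaklycompact:right-fg}$\Rightarrow$\ref{weaklycompact:reflexive}$\Rightarrow$\ref{weaklycompact:all}.

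I expect no serious obstacle. The only point needing care is the passage from finite generation to a single operator $T$ lying in the ideal, and Lemma~\ref{lem:row-operator-ideal} is designed exactly for that. The rest reduces to the open mapping theorem combined with the weak compactness characterisation of reflexivity.
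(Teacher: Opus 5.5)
Your proposal is correct and follows the paper's proof essentially step for step. The paper also writes $\mathcal W(E)=\Lift(T)$ via Theorem~\ref{thm:B}\ref{thm:B:i} and gets surjectivity of $T$ from $\cF(E)\subseteq\mathcal W(E)$ and Theorem~\ref{thm:B}\ref{thm:B:ii}, with $T\in\mathcal W(E^n,E)$ from Lemma~\ref{lem:row-operator-ideal}; it then concludes with the same open-mapping argument that $B_E$ is a weakly closed subset of a weakly compact set (you merely invoke the lemma and Theorem~\ref{thm:B}\ref{thm:B:ii} in the opposite order).
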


\begin{proof}
The equivalence of
\ref{weaklycompact:all} and \ref{weaklycompact:reflexive} is standard.
Indeed, if $E$ is reflexive, then every bounded operator on~$E$ is
weakly compact.  Conversely, if
\(
  \mathcal W(E)=\cB(E),
\)
then $\id_E$ is weakly compact, and hence the closed unit ball of~$E$
is weakly compact.

Condition~\ref{weaklycompact:all} plainly implies
\ref{weaklycompact:right-principal}, because in this case
\(
  \mathcal W(E)=\cB(E)=\id_E\cB(E),
\)
and \ref{weaklycompact:right-principal} implies
\ref{weaklycompact:right-fg}.

It remains to prove that \ref{weaklycompact:right-fg} implies
\ref{weaklycompact:reflexive}.  Suppose that $\mathcal W(E)$ is
finitely generated as a right ideal.  By
Theorem~\ref{thm:B}\ref{thm:B:i}, there are $n\in\N$ and
$T\in\cB(E^n,E)$ such that
\(
  \mathcal W(E)=\Lift(T).
\)
Since
\(
  \cF(E)\subseteq\mathcal W(E)=\Lift(T),
\)
Theorem~\ref{thm:B}\ref{thm:B:ii} shows that $T$ is surjective.
Moreover, Lemma~\ref{lem:row-operator-ideal}, applied to the operator
ideal of weakly compact operators, gives
\(
  T\in\mathcal W(E^n,E).
\)

The existence of a weakly compact surjection onto~$E$ forces~$E$ to be reflexive. 
This is well known, but for completeness, we include a short argument. Let~$B_E$ denote the closed unit ball of~$E$; it is norm closed and convex, and hence weakly closed. By the open mapping theorem, there is a constant $C>0$
such that
\(
  B_E\subseteq T(CB_{E^n}).
\)
It follows that~$B_E$ is a weakly closed subset of the weak closure of~$T(CB_{E^n})$, 
which is weakly compact because the operator~$T$ is weakly compact.  Hence $B_E$ is weakly compact, and therefore $E$ is reflexive.
\end{proof}

\section{The splitting criterion and its scope}

\begin{proof}[Proof of Theorem~\ref{thm:C}]
Suppose that $\cB(E)$ contains a non-fixed,
finitely generated maximal right ideal.  By
Corollary~\ref{cor:exact-obstruction}, it has the form
\(
  \Lift(T)
\)
for some $n\in\N$ and some surjective operator
\(
  T\in\cB(E^n,E)
\)
which is not right invertible.
This shows immediately that condition~\ref{thm:C:i} cannot be satisfied. 

Now assume, towards a contradiction, that condition~\ref{thm:C:ii} holds. Since $\Lift(T)$ is a maximal right ideal, it is closed by Lemma~\ref{lem:max-closed}.  Moreover, since it is non-fixed,
Theorem~\ref{thm:A} gives
\(
  \cF(E)\subseteq\Lift(T).
\)
Proposition~\ref{prop:uniform-lifts} therefore yields a constant
$C>0$ such that, for every non-zero $F\in\cF(E)$, there is an
operator $U_F\in\cB(E,E^n)$ satisfying
\begin{equation}\label{eq:uniform-finite-rank-lifts}
  TU_F=F
  \quad\text{and}\quad
  \lVert U_F\rVert<(C+1)\lVert F\rVert.
\end{equation}
For $F=0$, we take $U_0=0$.

Since $E$ has the bounded approximation property, there is a bounded
net $(F_\alpha)$ in $\cF(E)$ such that \(F_\alpha x\rightarrow x\) for every $x\in E$. It follows from~\eqref{eq:uniform-finite-rank-lifts} that the
corresponding net $(U_{F_\alpha})$ is bounded in $\cB(E,E^n)$.

By the standard duality for the projective tensor product
\cite[Proposition~A.3.70]{Dales}, we have an isometric identification
\[
  \left(E\widehat{\otimes}_{\pi}(E^n)^*\right)^*
  \cong
  \cB(E,(E^n)^{**}),
\]
where the duality bracket is given on elementary tensors by
\[
  \langle x\otimes\lambda,U\rangle
  =
  \lambda(Ux)
\]
for $x\in E$, $\lambda\in(E^n)^*$ and
$U\in\cB(E,(E^n)^{**})$.  Since $E$ is reflexive, so is $E^n$, and
hence this identifies $\cB(E,E^n)$ with the dual space
\[
  \bigl(E\widehat{\otimes}_{\pi}(E^n)^*\bigr)^*.
\]
The Banach--Alaoglu theorem now shows that, after passing to a subnet
and relabelling, we may suppose that $(U_{F_\alpha})$ converges in the
weak-star topology to an operator
\(
  U\in\cB(E,E^n).
\)

Let $x\in E$ and $\mu\in E^*$.  Since
\(
  \mu T\in(E^n)^*,
\)
we obtain
\begin{align*}
  \mu(TUx)
  &=
  \bigl\langle x\otimes (\mu T),U\bigr\rangle \\
  &=
  \lim_\alpha
  \bigl\langle x\otimes (\mu T),U_{F_\alpha}\bigr\rangle \\
  &=
  \lim_\alpha \mu T(U_{F_\alpha}x) \\
  &=
  \lim_\alpha \mu(F_\alpha x)
  =
  \mu(x).
\end{align*}
Since $x\in E$ and $\mu\in E^*$ were arbitrary, it follows that
\(
  TU=\id_E.
\)
Thus $T$ is right invertible, contrary to its choice.  Hence every
finitely generated maximal right ideal of $\cB(E)$ is
fixed.

When $E$ is infinite-dimensional, the final assertion follows from
Corollary~\ref{cor:fixed-implies-DZ}.
\end{proof}

\begin{proposition}\label{prop:examplesCi}
Let $E$ be isomorphic to either a Hilbert space or to~$\ell_1(\Gamma)$ for some index set~$\Gamma$. Then condition~\ref{thm:C:i} in Theorem~\ref{thm:C} is satisfied; that is, 
for every $n\in\N$, every surjective operator $T\in\cB(E^n,E)$ is right invertible.

Consequently, every finitely generated maximal right ideal of $\cB(E)$ is fixed, and~$\cB(E)$ contains a maximal right ideal which is not finitely generated, provided that~$E$ is infinite-dimensional.
\end{proposition}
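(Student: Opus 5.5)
The plan is to handle the two cases separately, since in each the point is that a surjection onto $E$ from $E^n$ has complemented kernel. Once condition~\ref{thm:C:i} of Theorem~\ref{thm:C} is established, the remaining assertions follow at once from Theorem~\ref{thm:C}, whose final sentence uses Corollary~\ref{cor:fixed-implies-DZ}.

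\emph{Hilbert case.} Since the conclusion is isomorphic in nature, we may assume that $E=H$ is a Hilbert space. Equip $E^n$ with the $\ell_2$-norm, so that $E^n=H^n$ is again a Hilbert space. Let $T\in\cB(H^n,H)$ be surjective. Then $\ker T$ is a closed subspace of a Hilbert space and is therefore complemented by its orthogonal complement. Hence $T$ is right invertible by the elementary characterisation recalled in Section~\ref{S:prelim}. Concretely, $T$ restricted to $(\ker T)^\perp$ is a bounded bijection onto $H$, so by the open mapping theorem its inverse is bounded, and that inverse is a right inverse of $T$. The passage from ``isomorphic to a Hilbert space'' to an honest Hilbert space is harmless: if $V\colon E\to H$ is an isomorphism and $T\in\cB(E^n,E)$ is surjective, then $VT(V^{-1})^{\oplus n}\in\cB(H^n,H)$ is surjective, and a right inverse for it transports back to one for $T$.

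\emph{$\ell_1(\Gamma)$ case.} Again we reduce to $E=\ell_1(\Gamma)$ and use the $\ell_1$-norm on $E^n$. This gives $E^n\cong\ell_1(\Gamma\times\{1,\ldots,n\})$, but that identification is not even needed. The key step is the lifting property of $\ell_1(\Gamma)$. Let $T\colon X\to\ell_1(\Gamma)$ be a surjection from any Banach space $X$. By the open mapping theorem there is a constant $C$ such that, for each $\gamma\in\Gamma$, we can choose $x_\gamma\in X$ with $Tx_\gamma=e_\gamma$ and $\lVert x_\gamma\rVert\leqslant C$. Define
\[
R\bigl((a_\gamma)_\gamma\bigr)=\sum_\gamma a_\gamma x_\gamma .
\]
The series converges absolutely, $R$ is bounded with $\lVert R\rVert\leqslant C$, and $TR=\id_E$ because $TR$ agrees with the identity on the unit vectors $e_\gamma$, whose span is dense. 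Applying this with $X=E^n$ yields condition~\ref{thm:C:i}.

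Neither step is a real obstacle; the only point requiring care is the isomorphic invariance. Both right invertibility and surjectivity are preserved under conjugation by isomorphisms, and the choice of $\ell_p$-norm on $E^n$ is immaterial, so that reduction is routine.
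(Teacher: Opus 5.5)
Your proof is correct and takes essentially the same route as the paper: the orthogonal complement of $\ker T$ in the Hilbertian $\ell_2$-sum gives the right inverse in the first case, and in the $\ell_1(\Gamma)$ case you use open-mapping choices of uniformly bounded preimages $x_\gamma$ of the unit vectors $e_\gamma$. The final assertions then follow from Theorem~\ref{thm:C}, and your explicit check that right invertibility is preserved under conjugation by an isomorphism just spells out the reduction that the paper states in one line.
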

    
\begin{proof}
Since condition~\ref{thm:C:i} is invariant under isomorphism, we may
suppose that $E$ is either a Hilbert space or $E=\ell_1(\Gamma)$ for
some set~$\Gamma$.

In the first case, equip $E^n$ with the Hilbertian $\ell_2$-sum norm,
which is equivalent to the product norm fixed in
Section~\ref{S:prelim}.  Then
$E^n=\ker T\oplus(\ker T)^\perp$.  The restriction of~$T$ to
$(\ker T)^\perp$ is a bounded bijection onto~$E$, so its inverse,
followed by the inclusion into~$E^n$, is a bounded right inverse for
$T$.

In the second case, let $(e_\gamma)_{\gamma\in\Gamma}$ denote the
canonical unit vector basis for~$\ell_1(\Gamma)$.  By the open mapping
theorem, there is $C>0$ such that, for every $\gamma\in\Gamma$, we may
choose $x_\gamma\in E^n$ with
$\lVert x_\gamma\rVert\leqslant C$ and $Tx_\gamma=e_\gamma$.  The
formula
\[
  U(a)=\sum_{\gamma\in\Gamma}a_\gamma x_\gamma
  \qquad(a=(a_\gamma)\in\ell_1(\Gamma))
\]
defines an operator $U\colon\ell_1(\Gamma)\to E^n$ with
$\lVert U\rVert\leqslant C$, and $TU=I_E$.

The final assertions follow from Theorem~\ref{thm:C}.
\end{proof}

The hypotheses of Theorem~\ref{thm:C} are sufficient, but condition \rm(i) is not stable under even very natural direct sums. To see this, we shall use the following standard fact; see, \emph{e.g.}, \cite[Theorem 2.3.1]{AlbiacKalton}.
\begin{lemma}\label{lem:l1-universal}
Every separable Banach space is a quotient of $\ell_1$.
\end{lemma}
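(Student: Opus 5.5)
We prove Lemma~\ref{lem:l1-universal} by the standard construction: fix a countable dense subset of the unit ball of the separable space and map the unit vector basis of $\ell_1$ onto it. Let $X$ be a separable Banach space. If $X=\{0\}$ the zero map suffices, so assume $X\neq\{0\}$. Choose a sequence $(x_k)_{k\in\N}$ which is norm dense in the closed unit ball $B_X$; separability of~$X$ gives such a sequence. Define
\[
  Q\colon\ell_1\longrightarrow X,\qquad Q(a)=\sum_{k=1}^\infty a_kx_k\qquad(a=(a_k)\in\ell_1).
\]
The series converges absolutely because $\lVert x_k\rVert\leqslant1$, so $Q$ is a well-defined bounded linear operator with $\lVert Q\rVert\leqslant1$.

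The only point needing an argument is surjectivity, and here the plan is the usual successive-approximation argument from the proof of the open mapping theorem. Observe that $Q(B_{\ell_1})$ contains every~$x_k$, so its closure contains~$B_X$. Now take $y\in B_X$. Choose $k_1$ with $\lVert y-x_{k_1}\rVert<\tfrac12$. Then $2(y-x_{k_1})\in B_X$, so choose $k_2$ with $\lVert 2(y-x_{k_1})-x_{k_2}\rVert<\tfrac12$, that is, $\lVert y-x_{k_1}-\tfrac12x_{k_2}\rVert<\tfrac14$. Inductively, we obtain indices $k_1,k_2,\ldots$ such that
\[
  \Bigl\lVert y-\sum_{j=1}^m2^{1-j}x_{k_j}\Bigr\rVert<2^{-m}\qquad(m\in\N).
\]
Set $a=\sum_j2^{1-j}e_{k_j}$, with coefficients added where indices repeat. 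Then $\lVert a\rVert_1\leqslant2$ and $Qa=y$. Thus $Q(2B_{\ell_1})\supseteq B_X$, so $Q$ is surjective and $X$ is a quotient of~$\ell_1$ (indeed, $X\cong\ell_1/\ker Q$ by the open mapping theorem).

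There is no real obstacle. The one place to be careful is the rescaling in the inductive step, which uses only that $(x_k)$ is dense in the whole ball $B_X$. Alternatively, one could quote the open mapping lemma that a bounded operator whose image of the unit ball is dense in some ball is surjective.
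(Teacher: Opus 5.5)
Your proof is correct: the dense sequence in $B_X$, the operator $a\mapsto\sum_k a_kx_k$, and the successive-approximation step giving $Q(2B_{\ell_1})\supseteq B_X$ all hold up. The paper does not prove this lemma but cites Albiac--Kalton (Theorem~2.3.1), whose proof is essentially this same standard argument.
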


\begin{example}[A non-right-invertible surjection]\label{ex:mixed}
Let $H$ be an infinite-dimensional separable Hilbert space, put
$L=\ell_1$ and $E=L\oplus_\infty H$.  By
Lemma~\ref{lem:l1-universal}, there is a surjection $q\colon L\to H$.
It is not right invertible, because a bounded linear right inverse would
embed $H$ into $\ell_1$, whereas subspaces of $\ell_1$ have the Schur
property.

Define $Q\colon E^2\to E$ by
\[
  Q\bigl((x_1,h_1),(x_2,h_2)\bigr)=(x_1,qx_2).
\]
Then $Q$ is surjective but not right invertible.  Indeed, if
$V\in\cB(E,E^2)$ were a right inverse, then composing $V$ with the
canonical embedding $\iota_H\colon H\to E$ and the projection~$P$ of $E^2$ onto the
$L$-coordinate of its second copy would give a right inverse for~$q$. This follows because $qP= \pi_HQ$, where $\pi_H\colon E\to H$ is the canonical projection, and hence \[ qPV\iota_H=\pi_HQV\iota_H=\id_H. \]
\end{example}

Thus condition~\ref{thm:C:i} can fail even for a very simple separable
Banach space.  However, Example~\ref{ex:mixed} does not produce a non-fixed, finitely generated maximal right ideal because maximality of~$\Lift(Q)$ would still
require the completion condition in Corollary~\ref{cor:exact-obstruction}.  In fact, as 
Theorem~\ref{thm:mixed-fixed} will show, every finitely generated
maximal right ideal of $\cB(E)$ \emph{is} fixed.

\begin{lemma}\label{lem:calkin-no-fg-max}
Let $H$ be an infinite-dimensional Hilbert space, and let
$\mathcal Q(H)=\cB(H)/\cK(H)$ be its Calkin algebra.  Then
$\mathcal Q(H)$ has no finitely generated maximal right
ideals.
\end{lemma}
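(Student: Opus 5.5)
Write $\mathcal Q=\mathcal Q(H)$ and let $\pi\colon\cB(H)\to\mathcal Q$ be the quotient map. We argue by contradiction and transfer the problem back to $\cB(H)$ via the pullback principle. The point is that $\mathcal Q$ is a $C^*$-algebra, so the structure theory of closed finitely generated right ideals applies.

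Suppose $M$ is a finitely generated maximal right ideal of $\mathcal Q$. By Lemma~\ref{lem:max-closed}, $M$ is closed. Since $\mathcal Q$ is a $C^*$-algebra, the result of Blecher and the first-named author recalled in the introduction shows that $M=p\mathcal Q$ for some projection $p\in\mathcal Q$. Maximality and properness give $p\neq 1$, and any projection $p'\leqslant p$ with $p'\ne p$ would be irrelevant; what matters is that $1-p\neq0$. We claim that $p\mathcal Q$ cannot be maximal.

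Lift $p$ to an orthogonal projection $P\in\cB(H)$. This lift is standard: projections in the Calkin algebra lift to projections, for instance via the spectral theorem applied to a self-adjoint lift. Since $1-p\ne0$, the range of $I-P$ is infinite-dimensional. Split $(I-P)H=K_1\oplus K_2$ with both $K_1$ and $K_2$ infinite-dimensional, and let $P_1$ be the orthogonal projection onto $K_1$. Then $q=\pi(P+P_1)$ is a projection in $\mathcal Q$ with $p\leqslant q\leqslant 1$. Moreover $q\neq p$ and $q\neq1$, because $P_1$ and $I-P-P_1$ are non-compact.

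This gives a strict chain
\[ p\mathcal Q\subsetneq q\mathcal Q\subsetneq\mathcal Q. \]
The inclusion $p\mathcal Q\subseteq q\mathcal Q$ holds because $p=qp$. It is strict because $q\in p\mathcal Q$ would force $q=pq$, hence $q=q^*=qp=p$. The inclusion $q\mathcal Q\subsetneq\mathcal Q$ is strict because $1\in q\mathcal Q$ would give $q=1$. The chain contradicts maximality of $M=p\mathcal Q$.

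The main obstacle is justifying $M=p\mathcal Q$, since this relies on the cited $C^*$-algebra theorem. As an alternative route, if every finitely generated maximal right ideal of $\mathcal Q$ were of this form, one could also argue via Proposition~\ref{prop:pullback}. The preimage $\pi^{-1}(M)$ is a maximal right ideal of $\cB(H)$ containing $\cK(H)$. It equals $P\cB(H)+\cK(H)$, and the same splitting of $(I-P)H$ shows that this is not maximal. The projection-lifting step is routine.
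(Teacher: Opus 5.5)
Your proof is correct and follows essentially the paper's route: closedness plus the Blecher--Kania lemma gives $M=p\mathcal Q$, and you then lift to an orthogonal projection and split an infinite-dimensional range into two infinite-dimensional pieces. The only differences are cosmetic. You build the intermediate proper right ideal $q\mathcal Q$ directly, whereas the paper argues that $1-p$ must be a minimal projection and that $\mathcal Q(H)$ has none. You also treat projection lifting as standard, whereas the paper proves it via continuous functional calculus.
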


\begin{proof}
Let $\pi\colon\cB(H)\to\mathcal Q(H)$ denote the quotient map, and
suppose that $M$ is a finitely generated maximal right
ideal of $\mathcal Q(H)$.  By Lemma~\ref{lem:max-closed}, $M$ is
closed, so \cite[Lemma~2.1]{BlecherKania} gives a projection
$p\in\mathcal Q(H)$ such that $M=p\mathcal Q(H)$.  Put $e=1-p$.
Since $M$ is proper and
\[
  \mathcal Q(H)=p\mathcal Q(H)\oplus e\mathcal Q(H)
\]
as right modules, $e\mathcal Q(H)$ is a non-zero simple right module.
It follows that $e$ is a minimal projection.  Indeed, if $f$ is a
projection with $0\neq f\leqslant e$, then $f\mathcal Q(H)$ is a
non-zero submodule of $e\mathcal Q(H)$, and hence equals it.  Writing $e=fa$ for some $a\in\mathcal Q(H)$, we obtain
$f=fe=f^2a=fa=e$.

We now show that $\mathcal Q(H)$ has no non-zero minimal projections.
Barnes proved more generally that every idempotent in
$\cB(X)/\cK(X)$ lifts to an idempotent in $\cB(X)$ for an arbitrary
Banach space~$X$; see
\cite[Lemma~1 and the remark following it]{BarnesAlgebraic}.  In the
Hilbert-space setting this can also be seen directly.  Choose
$B\in\cB(H)$ with $\pi(B)=e$ and put
$A=(B+B^*)/2$.  Then $A=A^*$, $\pi(A)=e$ and
$A^2-A\in\cK(H)$, so the essential spectrum of~$A$ is contained in
$\{0,1\}$.  Since spectral points of a self-adjoint operator outside
its essential spectrum are isolated eigenvalues of finite
multiplicity, $\sigma(A)\cap(0,1)$ is discrete and therefore countable.
We may consequently choose $t\in(0,1)\setminus\sigma(A)$.

The restriction of the characteristic function of $(t,\infty)$ to
$\sigma(A)$ is continuous. Continuous functional calculus therefore
gives an orthogonal projection
\(
  P=\mathbf 1_{(t,\infty)}(A);
\)
naturality of continuous functional calculus under the quotient
map implies that
\[
\pi(P)=\mathbf 1_{(t,\infty)}(e)=e. 
\]
Since $e\neq0$, the range of~$P$ is infinite-dimensional.  Decompose it
orthogonally as $\ran P=H_1\oplus H_2$, where both summands are
infinite-dimensional, and let $P_1$ and $P_2$ be the corresponding
orthogonal projections, so $P=P_1+P_2$.  Neither $P_1$ nor $P_2$ is
compact.  Hence $\pi(P_1)$ is a non-zero proper subprojection of~$e$,
contradicting minimality.
\end{proof}

\begin{theorem}
\label{thm:mixed-fixed}
Let $\Gamma$ be a non-empty set, let $H$ be an infinite-dimensional
separable Hilbert space, and put
\[
  L=\ell_1(\Gamma)
  \quad\text{and}\quad
  E=L\oplus_\infty H.
\]
Then every finitely generated maximal right ideal
of~$\cB(E)$ is fixed.  Consequently, $\cB(E)$ contains a maximal
right ideal which is not finitely generated.
\end{theorem}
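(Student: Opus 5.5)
The plan is to argue by contradiction, pulling the ideal back from the Calkin algebra of $H$ and invoking Lemma~\ref{lem:calkin-no-fg-max}. Write operators on $E=L\oplus_\infty H$ as $2\times2$ matrices, using the canonical embeddings $J_L,J_H$ and projections $P_L,P_H$, so that $T_{HH}=P_HTJ_H$ and $T_{LH}=P_LTJ_H$. Define
\[
  \theta\colon\cB(E)\to\mathcal Q(H),\qquad \theta(T)=\pi(T_{HH}),
\]
where $\pi\colon\cB(H)\to\mathcal Q(H)=\cB(H)/\cK(H)$ is the quotient map. Clearly $\theta$ is linear, unital and surjective.

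First I show that $\theta$ is a homomorphism. The matrix product gives $(ST)_{HH}=S_{HL}T_{LH}+S_{HH}T_{HH}$, so it suffices that every operator $H\to\ell_1(\Gamma)$ is compact. Such an operator is weakly compact because $H$ is reflexive, and $\ell_1(\Gamma)$ has the Schur property, so weakly compact operators into it are compact. Hence $\theta(ST)=\theta(S)\theta(T)$, and
\[
  \ker\theta=\{T\in\cB(E):T_{HH}\in\cK(H)\}.
\]

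Now suppose that $\mathcal R$ is a finitely generated maximal right ideal that is not fixed. By Corollary~\ref{cor:exact-obstruction}, $\mathcal R=\Lift(T)$ for a surjective $T\in\cB(E^n,E)$, and $\cK(E)\subseteq\mathcal R$ by Corollary~\ref{cor:approximable}. The key step is to show that $\cB(E)P_L:=\{XP_L:X\in\cB(E)\}$ is contained in $\mathcal R$, using the lifting argument from the $\ell_1(\Gamma)$ case of Proposition~\ref{prop:examplesCi}. Fix $X\in\cB(E)$. By the open mapping theorem there is $C>0$ such that, for each $\gamma\in\Gamma$, we can pick $x_\gamma\in E^n$ with $\lVert x_\gamma\rVert\leqslant C\lVert X\rVert$ and $Tx_\gamma=XJ_Le_\gamma$. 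Setting $U(a)=\sum_\gamma a_\gamma x_\gamma$ defines $U\in\cB(L,E^n)$ with $TU=XJ_L$. Therefore $XP_L=XJ_LP_L=T(UP_L)\in\Lift(T)$.

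Next I show that $\ker\theta\subseteq\mathcal R$. Let $B\in\ker\theta$ and decompose
\[
  B=BJ_LP_L+J_LB_{LH}P_H+J_HB_{HH}P_H .
\]
The first term lies in $\cB(E)P_L\subseteq\mathcal R$. The second term is compact because $B_{LH}\colon H\to L$ is compact, and the third is compact because $B\in\ker\theta$; both lie in $\cK(E)\subseteq\mathcal R$.

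Since $\mathcal R\supseteq\ker\theta$ is a maximal right ideal, $\theta(\mathcal R)$ is a proper right ideal of $\mathcal Q(H)$. It is maximal: any right ideal strictly between $\theta(\mathcal R)$ and $\mathcal Q(H)$ would pull back to a proper right ideal strictly containing $\mathcal R$. Moreover $\mathcal R=\theta^{-1}(\theta(\mathcal R))$, so Proposition~\ref{prop:pullback} shows that $\theta(\mathcal R)$ is finitely generated. This contradicts Lemma~\ref{lem:calkin-no-fg-max}, so every finitely generated maximal right ideal of $\cB(E)$ is fixed. The final assertion then follows from Corollary~\ref{cor:fixed-implies-DZ}, since $E$ is infinite-dimensional.

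I expect the main obstacle to be the inclusion $\ker\theta\subseteq\mathcal R$, specifically the off-diagonal block $J_HB_{HL}P_L$, which is not compact in general. It is handled only because the ambient ideal is a lifting ideal of a surjection and operators out of $\ell_1(\Gamma)$ always lift through surjections.
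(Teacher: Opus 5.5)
Your proof is correct. It has the same architecture as the paper's (the corner homomorphism onto $\mathcal Q(H)$, the inclusion of its kernel in $\mathcal R$, and a contradiction with Lemma~\ref{lem:calkin-no-fg-max}), but it obtains the key inclusion $\ker\theta\subseteq\mathcal R$ through a different decomposition.

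The paper lifts only $\iota_L$, which gives the idempotent $P=\iota_L\pi_L\in\mathcal R$, and then uses the right-ideal property to absorb the top row $PS$. The leftover lower-left block $L\to H$ need not be compact (e.g.\ a quotient map $\ell_1\to H$). So the paper has to show that this block is inessential, using the left--right symmetry of inessentiality, and then invoke the strong dichotomy of Corollary~\ref{cor:strictly-singular}.

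You instead lift every $XJ_L$. This puts the whole left column $BJ_LP_L$, including that awkward block, into $\mathcal R$, and leaves only the compact upper-right block $H\to L$ and the compact corner. You therefore need only $\cK(E)\subseteq\mathcal R$, and this can be made fully elementary. You cite Corollary~\ref{cor:approximable}, whose proof in the paper goes through the inessential operators. But $E$ has the approximation property, so $\cK(E)=\cA(E)=\overline{\cF(E)}\subseteq\mathcal R$ follows from Theorem~\ref{thm:A} and Lemma~\ref{lem:max-closed} alone, with no Jacobson-radical argument.

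Similarly, your proof that operators $H\to\ell_1(\Gamma)$ are compact (reflexivity plus the Schur property) replaces the paper's reduction to a countable $\Gamma_0$ followed by Pitt's theorem. It also does not use the separability of $H$. In exchange, the paper's version needs only one lift and exhibits the idempotent $P$ explicitly; your inclusion contains $P$ as the case $X=I_E$.

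A minor notational point: write $\cB(E)J_LP_L$ rather than $\cB(E)P_L$, since $P_L\colon E\to L$ is not itself in $\cB(E)$.
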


\begin{proof}
Suppose, towards a contradiction, that $\mathcal R$ is a 
non-fixed, finitely generated maximal right ideal
of~$\cB(E)$.  By Corollary~\ref{cor:exact-obstruction}, there are
$n\in\N$ and a surjection $T\in\cB(E^n,E)$ such that
\(
  \mathcal R=\Lift(T).
\)
Let
\(
  \iota_L\colon L\to E
\)
and
\(
  \pi_L\colon E\to L
\)
denote the canonical coordinate embedding and projection,
respectively, and write $(e_\gamma)_{\gamma\in\Gamma}$ for the unit vector basis of~$L$.  By the open mapping theorem, for each $\gamma\in\Gamma$, we may choose $z_\gamma\in E^n$ such that
\[
  Tz_\gamma=\iota_L e_\gamma
  \quad\text{and}\quad
  \sup_{\gamma\in\Gamma}\lVert z_\gamma\rVert<\infty.
\] 
Consequently, the formula
\[
  V(a)
  =\sum_{\gamma\in\Gamma}a_\gamma z_\gamma
  \qquad (a=(a_\gamma)\in L)
\]
defines an operator $V\in\cB(L,E^n)$ satisfying
\(
  TV=\iota_L.
\)
It follows that the coordinate projection
\[
  P=\iota_L\pi_L
   =
  \begin{pmatrix}
    I_L&0\\
    0&0
  \end{pmatrix}
\]
belongs to~$\mathcal R$, because
\(
  P=TV\pi_L\in\Lift(T).
\)

As above, we write operators on $E=L\oplus H$ in block-matrix form.  Every operator
from~$H$ to~$L$ is compact.  Indeed, its range is separable and is
therefore contained in $\ell_1(\Gamma_0)$ for some countable subset
$\Gamma_0$ of~$\Gamma$, after which Pitt's theorem~\cite{Pitt} applies.
Consequently,
\[
  \Phi\colon\cB(E)\longrightarrow\mathcal Q(H),
  \qquad
  \Phi
  \begin{pmatrix}
    A&B\\
    C&D
  \end{pmatrix}
  =D+\cK(H),
\]
is a continuous surjective unital algebra homomorphism.  To see
multiplicativity, observe that the lower right-hand corner of a product is
\(
  C_1B_2+D_1D_2,
\)
and $C_1B_2$ is compact.

We claim that $\ker\Phi\subseteq\mathcal R$.  Since $P\in\mathcal R$,
we have $P\cB(E)\subseteq\mathcal R$.  We have already seen that
every operator from $H$ to $L$ is compact, and hence inessential.  By
the left-right symmetry of inessentiality noted above, every operator
from $L$ to $H$ is also inessential.  Indeed, if
$C\in\cB(L,H)$ and $B_0\in\cB(H,L)$, then $B_0$ is compact, so
$I_L-B_0C$ is Fredholm.

Now take
\[
  S=
  \begin{pmatrix}
    A&B\\
    C&D
  \end{pmatrix}
  \in\ker\Phi.
\]
Then $D\in\cK(H)$, and
\[
  S
  =
  PS+
  \begin{pmatrix}
    0&0\\
    C&0
  \end{pmatrix}
  +
  \begin{pmatrix}
    0&0\\
    0&D
  \end{pmatrix}.
\]
The first summand belongs to $\mathcal R$.  The second belongs to
$\mathcal E(E)$ by the operator-ideal property of the inessential
operators, while the third is compact and hence also inessential.
Corollary~\ref{cor:strictly-singular} therefore shows that the last two
summands belong to $\mathcal R$ as well.  Hence $S\in\mathcal R$, proving
the claim.

Since $\ker\Phi\subseteq\mathcal R$, the quotient correspondence shows
that
\(
  M=\Phi(\mathcal R)
\)
is a maximal right ideal of~$\mathcal Q(H)$ and that
\(
  \mathcal R=\Phi^{-1}(M).
\)
Applying~$\Phi$ to a finite generating set for~$\mathcal R$ shows that
$M$ is finitely generated.  This contradicts
Lemma~\ref{lem:calkin-no-fg-max}.  Therefore every finitely generated maximal right ideal of~$\cB(E)$ is fixed.

The final assertion follows from
Corollary~\ref{cor:fixed-implies-DZ}.
\end{proof}

\section{Boolean families and many maximal right ideals}

\noindent We first isolate the key algebraic counting argument.  A \emph{Boolean family of idempotents indexed by $\mathcal{P}(\N)$} in a unital algebra $A$ is a family $(p_S)_{S\subseteq\N}$ of idempotents in~$A$  such that
\[
  p_\varnothing=0,\qquad p_\N=1_A,\qquad
  p_Sp_T=p_{S\cap T},\qquad
  p_{\N\setminus S}=1_A-p_S,
\]
and $p_S\neq0$ whenever $S\neq\varnothing$.

\begin{proposition}\label{prop:boolean}
Let~$A$ be a unital algebra which contains a Boolean family of idempotents indexed by~$\mathcal{P}(\N)$. Then~$A$ contains at least $2^{\mathfrak{c}}$ maximal right ideals.
\end{proposition}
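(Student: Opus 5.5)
The plan is to use ultrafilters on $\N$: there are $2^{\mathfrak c}$ of them, and each will give its own maximal right ideal. For an ultrafilter $\mathcal U$ on $\N$, set
\[
  J_{\mathcal U}=\sum_{S\notin\mathcal U} p_S A
  =\Bigl\{\textstyle\sum_{k=1}^m p_{S_k}a_k : m\in\N,\ S_k\notin\mathcal U,\ a_k\in A\Bigr\},
\]
which is the right ideal generated by the idempotents $p_S$ with $S\notin\mathcal U$. The steps are: (1) show that $J_{\mathcal U}$ is proper; (2) apply Zorn's lemma to get a maximal right ideal $\mathcal R_{\mathcal U}\supseteq J_{\mathcal U}$; (3) show that $\mathcal U\mapsto\mathcal R_{\mathcal U}$ is injective. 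Pospíšil's theorem gives $2^{\mathfrak c}$ ultrafilters on $\N$, so this yields the claim.

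For step (1), suppose that $1_A\in J_{\mathcal U}$. Then $1_A=\sum_k p_{S_k}a_k$ with every $S_k\notin\mathcal U$. Put $S=S_1\cup\dots\cup S_m$; then $S\notin\mathcal U$ because $\mathcal U$ is an ultrafilter, so $T=\N\setminus S\in\mathcal U$ and $T\neq\varnothing$. Each $S_k$ is disjoint from $T$, so $p_Tp_{S_k}=p_{T\cap S_k}=p_\varnothing=0$. Multiplying on the left by $p_T$ gives $p_T=p_T\cdot1_A=\sum_k p_Tp_{S_k}a_k=0$. This contradicts the requirement that $p_T\neq0$ for non-empty $T$. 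Hence $J_{\mathcal U}$ is proper. Step (2) is then the standard Zorn argument, since $A$ is unital: a union of a chain of proper right ideals containing $J_{\mathcal U}$ is still proper because it does not contain $1_A$.

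For step (3), let $\mathcal U\neq\mathcal V$ be distinct ultrafilters, and choose $S\in\mathcal U\setminus\mathcal V$. Then $\N\setminus S\notin\mathcal U$, so $1_A-p_S=p_{\N\setminus S}\in J_{\mathcal U}\subseteq\mathcal R_{\mathcal U}$. Also $S\notin\mathcal V$, so $p_S\in J_{\mathcal V}\subseteq\mathcal R_{\mathcal V}$. If $\mathcal R_{\mathcal U}=\mathcal R_{\mathcal V}$, this one ideal would contain both $p_S$ and $1_A-p_S$, hence $1_A$, which is impossible. So the map is injective, and $A$ has at least $2^{\mathfrak c}$ maximal right ideals.

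The main point of care is step (1): the relation $p_Sp_T=p_{S\cap T}$ has to be used from the correct side. Left multiplication by $p_T$ kills each generator $p_{S_k}$, and this is compatible with the right ideal structure. No commutativity of $A$ is needed beyond this relation, which already forces the idempotents $p_S$ to commute with one another.
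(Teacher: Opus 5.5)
Your proposal is correct and follows essentially the same route as the paper: for each ultrafilter $\mathcal U$ you take the right ideal generated by $\{p_S : S\notin\mathcal U\}$, prove properness by left-multiplying a putative representation of $1_A$ by $p_T$ with $T=\N\setminus\bigcup_k S_k\in\mathcal U$, and separate distinct ultrafilters using $p_S+p_{\N\setminus S}=1_A$. Your added details (the Zorn step and the observation that the relation $p_Sp_T=p_{S\cap T}$ forces the $p_S$ to commute) are accurate but not needed beyond what the paper states.
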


\begin{proof}
For an ultrafilter $\mathcal{U}$ on $\N$, let $I_\mathcal{U}$ be the right ideal generated by the set
\(
  \{p_S:S\notin\mathcal{U}\}.
\)
To see that~$I_{\mathcal{U}}$ is proper, assume the contrary, and write 
\begin{equation}\label{prop:boolean:eq1}
 1_A=\sum_{j=1}^mp_{S_j}a_j   
\end{equation}
for some $m\in\N$, $S_1,\ldots,S_m\in\mathcal{P}(\N)\setminus\mathcal{U}$ and $a_1,\ldots,a_m\in A$. Then
\[
  T=\N\setminus\bigcup_{j=1}^mS_j\in\mathcal{U}.
\]
Hence $T\neq\varnothing$, so $p_T\neq0$; however, multiplying the identity~\eqref{prop:boolean:eq1} on the left by~$p_T$, we obtain $p_T=0$, a~contra\-dic\-tion.

Choose a maximal right ideal $M_\mathcal{U}$ of~$A$ containing~$I_\mathcal{U}$. We claim that the map $\mathcal{U}\mapsto M_\mathcal{U}$ is injective. Indeed, if~$\mathcal{U}$ and~$\mathcal{V}$ are distinct ultrafilters on~$\N$, choose $S\in\mathcal{U}\setminus\mathcal{V}$.  Then $p_{\N\setminus S}\in M_\mathcal{U}$ and $p_S\in M_\mathcal{V}$.  Were the two maximal ideals equal, they would contain
\[
  p_S+p_{\N\setminus S}=1_A,
\]
a contradiction.  Thus distinct ultrafilters give distinct maximal right ideals. Now the conclusion follows from the fact that there are $2^{\mathfrak{c}}$ ultrafilters on $\N$; see also the corresponding left-sided argument in \cite{DKKKL}.
\end{proof}

\begin{proof}[Proof of Theorem~\ref{thm:D}]
Let
\[
  E=\bigoplus_{n=1}^\infty E_n
\]
be an unconditional Schauder decomposition with each closed subspace $E_n\neq\{0\}$.  For $S\subseteq\N$, let $P_S$ be the coordinate projection onto $\bigoplus_{n\in S}E_n$.  Unconditionality ensures that~$P_S\in\cB(E)$, and $(P_S)_{S\subseteq\N}$ is a Boolean family of idempotents indexed by~$\mathcal{P}(\N)$. Hence $\cB(E)$ contains at least $2^{\mathfrak{c}}$ maximal right ideals by Proposition~\ref{prop:boolean}.

Since $E$ is separable, a bounded operator on $E$ is determined by its values on a countable dense subset.  Therefore
\(
  |\cB(E)|=\mathfrak{c}.
\)
It follows that~$\cB(E)$ contains~$2^{\mathfrak{c}}$ subsets, and therefore at most~$2^{\mathfrak{c}}$ maximal right ideals. Furthermore, there are at most $\mathfrak{c}$ finite subsets of~$\cB(E)$, and consequently at most $\mathfrak{c}$ finitely generated right ideals.  Removing these from a family of cardinality $2^{\mathfrak{c}}$ leaves $2^{\mathfrak{c}}$ non-finitely-generated maximal right ideals.
\end{proof}

\begin{corollary}\label{cor:complemented-unconditional}
The conclusion of Theorem~\ref{thm:D} holds if $E$ is separable and
contains a complemented subspace with an unconditional Schauder
decomposition into a countably infinite sequence of non-zero subspaces.
\end{corollary}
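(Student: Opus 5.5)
Let $F$ be a complemented subspace of $E$ with an unconditional Schauder decomposition $F=\bigoplus_{n=1}^\infty F_n$ into non-zero subspaces, and let $R\in\cB(E)$ be an idempotent with range $F$. The plan is to build a Boolean family of idempotents in $\cB(E)$ and then repeat the counting argument from the proof of Theorem~\ref{thm:D}. The family cannot simply consist of the coordinate projections $Q_S$ on $F$ composed with $R$, because then $p_\N=R$ rather than $\id_E$. We therefore absorb the complement into the first summand. Write $G=\ker R$ and define, for $S\subseteq\N$,
\[
  p_S=\begin{cases} Q_SR+(\id_E-R), & 1\in S,\\ Q_SR, & 1\notin S,\end{cases}
\]
where $Q_S\in\cB(F)$ is the coordinate projection onto $\bigoplus_{n\in S}F_n$, regarded as an operator into~$E$. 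This $Q_S$ is bounded by unconditionality.

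Equivalently, $E=G\oplus F$ carries the Schauder decomposition
\[
E_1'=G\oplus F_1,\qquad E_n'=F_n \quad (n\geqslant2).
\]
This decomposition is again unconditional, because adding a complemented summand to a single block preserves unconditionality. Each $E_n'$ is non-zero, and the $p_S$ are exactly its coordinate projections. The Boolean identities are then routine. Indeed, $p_\varnothing=0$ and $p_\N=\id_E$. Moreover, $p_Sp_T=p_{S\cap T}$, since $Q_SRQ_TR=Q_{S\cap T}R$, $R(\id_E-R)=0$, and $(\id_E-R)$ is idempotent. Also $p_{\N\setminus S}=\id_E-p_S$, and $p_S\neq0$ whenever $S\ne\varnothing$ because each $F_n\neq\{0\}$. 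Proposition~\ref{prop:boolean} then gives at least $2^{\mathfrak c}$ maximal right ideals.

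The counting step is unchanged from Theorem~\ref{thm:D}. Since $E$ is separable, $|\cB(E)|=\mathfrak c$. Hence $\cB(E)$ has at most $2^{\mathfrak c}$ maximal right ideals and at most $\mathfrak c$ finitely generated right ideals. It follows that exactly $2^{\mathfrak c}$ maximal right ideals are not finitely generated.

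In fact, the shortcut is simply to observe that $E$ itself then satisfies the hypothesis of Theorem~\ref{thm:D}, via the decomposition $(E_n')$, so we may cite that theorem directly. The only point needing care is verifying that merging $G$ into the first block yields an unconditional decomposition, meaning that the $p_S$ are bounded. This follows from the formula above, since $\lVert p_S\rVert\leqslant \sup_S\lVert Q_S\rVert\,\lVert R\rVert+\lVert \id_E-R\rVert$.
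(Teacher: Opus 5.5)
Your proposal is correct and follows the paper's proof. The paper likewise merges the complement into the first block, obtaining the unconditional decomposition $E=(Z\oplus Y_1)\oplus\bigoplus_{n\geqslant2}Y_n$, and then applies Theorem~\ref{thm:D} directly. Your explicit check of the Boolean identities for the $p_S$, together with the recounting, simply unpacks the proof of that theorem.
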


\begin{proof}
Write $E=Y\oplus Z$, where $Y=\bigoplus_{n\geqslant1}Y_n$ is an unconditional Schauder decomposition.  Then
\[
  E=(Z\oplus Y_1)\oplus\bigoplus_{n\geqslant2}Y_n
\]
is an unconditional Schauder decomposition of $E$ into non-zero subspaces. Therefore Theorem~\ref{thm:D} applies.
\end{proof}
\section{Reflexive spaces, Lebesgue spaces and Orlicz spaces}
\begin{theorem}\label{thm:reflexive}
Let $E$ be an infinite-dimensional reflexive Banach space.  Then
$\cB(E)$ contains a maximal right ideal which is not finitely generated.
\end{theorem}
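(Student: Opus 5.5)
The plan is to transfer White's left-sided result to the right via the adjoint map, as the introduction announces. The adjoint map $T\mapsto T^*$ is an anti-isomorphism from $\cB(E)$ onto $\cB(E^*)$ when $E$ is reflexive. Indeed, every operator $S\in\cB(E^*)$ is the adjoint of the operator $J_E^{-1}S^*J_E\in\cB(E)$, where $J_E\colon E\to E^{**}$ is the canonical isomorphism. Hence $T\mapsto T^*$ is a unital algebra isomorphism of $\cB(E)^{\op}$ onto $\cB(E^*)$.

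By Proposition~\ref{prop:opposite}, the maximal right ideals of $\cB(E)$ correspond to the maximal left ideals of $\cB(E)^{\op}$, with finite generation preserved. Transporting along the isomorphism, a subset $\mathcal R$ of $\cB(E)$ is a finitely generated maximal right ideal if and only if
\[
\mathcal R^*=\{T^*:T\in\mathcal R\}
\]
is a finitely generated maximal left ideal of $\cB(E^*)$. Concretely, the identity $(T_1U_1+\cdots+T_nU_n)^*=U_1^*T_1^*+\cdots+U_n^*T_n^*$ shows that generators are carried to generators.

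It therefore suffices to produce a maximal left ideal of $\cB(E^*)$ which is not finitely generated. The space $E^*$ is again infinite-dimensional and reflexive. At this point I invoke the left-sided result for reflexive spaces: in \cite{DKKKL} (and White's subsequent work), for an infinite-dimensional reflexive space $F$, $\cB(F)$ has a non-fixed maximal left ideal which is not finitely generated. I would cite this with $F=E^*$. I would check that the hypotheses match exactly, namely that the result is stated for reflexive $F$ rather than, say, for $F$ with an additional property.

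The main obstacle is precisely this citation: the right-sided version has no independent proof here, and all the depth lies in the left-sided theorem. If the left-sided statement were available only under extra hypotheses, I would fall back on a direct argument. A natural candidate is the left-sided analogue of Corollary~\ref{cor:fixed-implies-DZ}, because in \cite{DKKKL} it is shown that a finitely generated left ideal containing $\cF(F)$ forces $F$ to be a finite sum of quotient-like pieces $\cB(F)$-generated. The reflexive case is handled there by weak-star compactness, much like the proof of Theorem~\ref{thm:C}\ref{thm:C:ii}.

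Either way, the final step is to pull the non-finitely-generated maximal left ideal $\mathcal L$ of $\cB(E^*)$ back. The resulting ideal
\[
\{T\in\cB(E):T^*\in\mathcal L\}
\]
is a maximal right ideal of $\cB(E)$ which is not finitely generated.
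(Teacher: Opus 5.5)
Your proposal is correct and follows the paper's proof: for reflexive $E$ the adjoint map is an algebra isomorphism $\cB(E)\to\cB(E^*)^{\op}$, White's left-sided theorem for infinite-dimensional reflexive spaces supplies a non-finitely-generated maximal left ideal of $\cB(E^*)$, and Proposition~\ref{prop:opposite} together with pulling back along the isomorphism finishes the argument. The paper takes this left-sided input from \cite[Corollary~2.2.7]{WhiteThesis}, so that is the reference whose hypotheses you should check, and your speculative fallback paragraph is not needed.
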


\begin{proof}
The adjoint map is an isometric algebra isomorphism
\[
  \cB(E)\longrightarrow\cB(E^*)^{\op},
  \qquad T\longmapsto T^*,
\]
because $E$ is reflexive.  White proved that $\cB(E^*)$ has a maximal
left ideal which is not finitely generated
\cite[Corollary~2.2.7]{WhiteThesis}.  Proposition~\ref{prop:opposite}
therefore gives a non-finitely-generated maximal right ideal of
$\cB(E^*)^{\op}$; taking its preimage under the displayed isomorphism
gives the result.
\end{proof}

\begin{proposition}\label{prop:complemented-l1}
Let $E$ be a Banach space which contains a complemented copy of
$\ell_1$.  Then $\cB(E)$ contains a proper right ideal $\mathcal J$
with the following properties:
\begin{enumerate}[label={\rm(\roman*)}]
\item\label{prop:complemented-l1:approximable}
\(\cA(E)\subseteq\overline{\mathcal{J}};\) 
\item\label{prop:complemented-l1:rigidity}
$\cB(E)$ is the only closed, finitely generated right
ideal which contains $\mathcal J$.
\end{enumerate}

Consequently, no proper closed right ideal of~$\cB(E)$ containing $\mathcal J$
is finitely generated.  In particular,
$\overline{\mathcal J}$ is a proper closed right ideal which is not
finitely generated, and every maximal right ideal
containing $\mathcal J$ is non-fixed and not finitely
generated.
\end{proposition}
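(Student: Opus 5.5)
The plan is to build $\mathcal J$ from the complemented copy of $\ell_1$ and to use the lifting property of $\ell_1$. This lifting property was already exploited in Proposition~\ref{prop:examplesCi} and Theorem~\ref{thm:mixed-fixed}. Write $E=L\oplus Z$ with $L\cong\ell_1$ complemented. Let $\iota_L\colon L\to E$ and $\pi_L\colon E\to L$ be the coordinate maps, and put $P=\iota_L\pi_L$. I would define
\[
  \mathcal J=(\id_E-P)\cB(E)+\cF(E).
\]
This is a right ideal, since $\cF(E)$ is a two-sided ideal.

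First I check properness. Suppose $\id_E=(\id_E-P)A+F$ with $F\in\cF(E)$. Multiplying on the left by $P$ gives $P=PF$, which has finite rank. This is impossible because $L$ is infinite-dimensional. Property~\ref{prop:complemented-l1:approximable} is immediate, since $\cF(E)\subseteq\mathcal J$ and $\cA(E)=\overline{\cF(E)}$.

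The main step is~\ref{prop:complemented-l1:rigidity}. Let $\mathcal R$ be a finitely generated right ideal containing $\mathcal J$. Closedness is in fact not needed here. By Theorem~\ref{thm:B}\ref{thm:B:i}, $\mathcal R=\Lift(T)$ for some $T\in\cB(E^n,E)$. Since $\cF(E)\subseteq\mathcal R$, Theorem~\ref{thm:B}\ref{thm:B:ii} shows that $T$ is surjective. Next I run the $\ell_1$ lifting argument from the proof of Theorem~\ref{thm:mixed-fixed}. Let $(e_k)$ be the unit vector basis of $L$. By the open mapping theorem there are vectors $z_k\in E^n$ with $\sup_k\lVert z_k\rVert<\infty$ and $Tz_k=\iota_Le_k$. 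The formula $V(a)=\sum_k a_kz_k$ then defines $V\in\cB(L,E^n)$ with $TV=\iota_L$. Hence $P=T(V\pi_L)\in\Lift(T)$. Since also $\id_E-P\in\mathcal J\subseteq\mathcal R$, we get $\id_E\in\mathcal R$, so $\mathcal R=\cB(E)$.

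The consequences follow from this. A proper closed right ideal containing $\mathcal J$ cannot be finitely generated by~\ref{prop:complemented-l1:rigidity}. Thus $\overline{\mathcal J}$ is closed, and it is proper, because $\lVert\id_E-J\rVert<1$ would make $J\in\mathcal J$ invertible. So $\overline{\mathcal J}$ is not finitely generated. Now let $\mathcal M$ be a maximal right ideal containing $\mathcal J$. It is closed by Lemma~\ref{lem:max-closed}, so it is not finitely generated. It contains $\cF(E)$, so it is non-fixed by Theorem~\ref{thm:A}.

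I expect no serious obstacle. The only genuinely analytic point is the bounded choice of lifts $z_k$ giving $TV=\iota_L$, and it is exactly the argument already used for $\ell_1(\Gamma)$.
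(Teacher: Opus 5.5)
Your proof is correct, and it takes a different, more economical route than the paper.

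\textbf{The paper's route.} The paper does not put $\cF(E)$ into $\mathcal J$. It takes $\mathcal J=Q\cB(E)+\bigl\{\sum_{n\in F_0}R_nS_n\bigr\}$ with $F_0$ finite, built from the rank-one coordinate idempotents $R_n=U(e_n\otimes e_n^*)U^{-1}P$ of the complemented $\ell_1$. Finite-rank operators therefore appear only in $\overline{\mathcal J}$. For rigidity, the paper uses closedness of $\mathcal R=\Lift(T)$ through Proposition~\ref{prop:uniform-lifts} to choose lifts $TV_n=R_n$ with $\sup_n\lVert V_n\rVert<\infty$. It then glues them into $V=\sum_nV_nR_n$ via the estimate $\sum_n\lVert R_nx\rVert\leqslant K\lVert x\rVert$, which gives $TV=P$.

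\textbf{Your route.} You build $\cF(E)$ into $\mathcal J$, so Theorem~\ref{thm:B}\ref{thm:B:ii} makes $T$ surjective for every finitely generated $\mathcal R\supseteq\mathcal J$. The $\ell_1$-lifting argument of Theorem~\ref{thm:mixed-fixed} then gives $P\in\Lift(T)$ in one step. This also yields a stronger rigidity statement: no proper right ideal containing your $\mathcal J$, closed or not, is finitely generated. In particular $\mathcal J$ itself is not finitely generated.

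\textbf{Why the paper needs closedness.} For the paper's smaller ideal, the closedness hypothesis cannot be dropped. Identify the complemented copy with $\ell_1$, so that $R_n=\iota_L(e_n\otimes e_n^*)\pi_L$, and let $D$ be the diagonal operator $e_n\mapsto e_n/n$. Put $W=Q+\iota_LD\pi_L$. Then:
\begin{enumerate}[label=\rm(\alph*)]
\item $WQ=Q$;
\item $W\iota_L(n\,e_n\otimes e_n^*)\pi_L=R_n$ for every $n$;
\item $\pi_LW=D\pi_L$ is not surjective, so $W$ is not surjective.
\end{enumerate}
Hence $W\cB(E)$ is a proper principal right ideal containing the paper's $\mathcal J$.

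\textbf{Nothing is lost by your choice.} With the same projection $P$, the paper's ideal is contained in yours, and yours is contained in $Q\cB(E)+\cA(E)\subseteq\overline{\mathcal J}$. So the two ideals have the same closure and the same maximal right ideals above them, and the final consequences coincide.
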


\begin{proof}
Choose a complemented subspace $F$ of $E$, an isomorphism
\(
  U\colon \ell_1\to F,
\)
and a bounded projection $P\colon E\to F$.  Put
\(
  Q=\id_E-P.
\)
Let $(e_n)_{n\in\N}$ denote the canonical basis of $\ell_1$, and let
$(e_n^*)_{n\in\N}$ denote the corresponding coordinate functionals.
For each $n\in\N$, define
\[
  R_n
  =
  U(e_n\otimes e_n^*)U^{-1}P
  \in\cB(E).
\]
A direct calculation shows that
\begin{equation}\label{prop:complemented-l1:eq1}
  R_mR_n=\delta_{m,n}R_n
  \quad\text{and}\quad
  R_mQ=0
  \qquad(m,n\in\N).
\end{equation}
Thus the operators $R_n$ are non-zero, pairwise orthogonal
projections.  Moreover, the canonical basis expansion in $\ell_1$
gives
\begin{equation}\label{eq:complemented-l1-expansion}
  Px=\sum_{n=1}^\infty R_nx
  \qquad(x\in E),
\end{equation}
where the series converges in norm.

Set
\(
  K=\lVert U\rVert\,\lVert U^{-1}\rVert\,\lVert P\rVert.
\)
Since $\lVert e_n\otimes e_n^*\rVert=1$, the definition of $R_n$
immediately gives
\[
  \sup_{n\in\N}\lVert R_n\rVert\leqslant K.
\]
We shall also use the stronger pointwise estimate
\begin{align}\label{eq:complemented-l1-summability}
  \sum_{n=1}^\infty\lVert R_nx\rVert
  &=
  \sum_{n=1}^\infty
  \left\|
    U\bigl(e_n^*(U^{-1}Px)e_n\bigr)
  \right\|\\
  &\leqslant
  \lVert U\rVert
  \sum_{n=1}^\infty
  \left|e_n^*(U^{-1}Px)\right| \notag\\
  &=
  \lVert U\rVert\,\lVert U^{-1}Px\rVert_{\ell_1}
  \leqslant K\lVert x\rVert
  \qquad(x\in E).\notag  
\end{align}

Consider the right ideal
\[
  \mathcal J
  =
  Q\cB(E)
  +
  \left\{
    \sum_{n\in F_0}R_nS_n:
    F_0\subseteq\N\text{ finite},\
    S_n\in\cB(E)\ (n\in F_0)
  \right\}.
\]
We first show that $\mathcal J$ is proper.  Otherwise, there would be a
finite subset $F_0$ of $\N$ and operators
$S_0,S_n\in\cB(E)$, for $n\in F_0$, such that
\[
  \id_E
  =
  QS_0+\sum_{n\in F_0}R_nS_n.
\]
Choose $m\in\N\setminus F_0$.  Composition on the left by $R_m$
gives
\[
  R_m
  =
  R_mQS_0+\sum_{n\in F_0}R_mR_nS_n
  =0
\]
by~\eqref{prop:complemented-l1:eq1}, a contradiction.

We next prove~\ref{prop:complemented-l1:approximable}.  Let
$x\in E$ and $\lambda\in E^*$.  We have
\(
  x\otimes\lambda
  =
  Q(x\otimes\lambda)+P(x\otimes\lambda).
\)
The first summand belongs to $Q\cB(E)\subseteq\mathcal J$.  By
\eqref{eq:complemented-l1-expansion},
\[
  \sum_{n=1}^N R_n(x\otimes\lambda)
  =
  \left(\sum_{n=1}^N R_nx\right)\otimes\lambda
  \underset{N\to\infty}{\longrightarrow}
  (Px)\otimes\lambda
  =
  P(x\otimes\lambda)
\]
in operator norm.  Each partial sum on the left-hand side belongs to $\mathcal J$, so
$P(x\otimes\lambda)\in\overline{\mathcal{J}}$. Hence $x\otimes\lambda\in\overline{\mathcal{J}}$, and therefore  
\(
  \cF(E)\subseteq\overline{\mathcal J}.
\)
Since $\overline{\mathcal J}$ is closed, it follows that
\(
  \cA(E)=\overline{\cF(E)}
  \subseteq\overline{\mathcal J}.
\)

We now establish~\ref{prop:complemented-l1:rigidity}.  Let
$\mathcal R$ be a closed,  finitely generated right ideal
such that
\(
  \mathcal J\subseteq\mathcal R.
\)
By Theorem~\ref{thm:B}\ref{thm:B:i}, 
\(
  \mathcal R=\Lift(T)
\) for some $r\in\N$ and 
\(
  T\in\cB(E^r,E).
\)
Since $\mathcal R$ is closed,
Proposition~\ref{prop:uniform-lifts} provides a constant $C\geqslant1$
such that
\[
  \inf\left\{
    \lVert V\rVert:
    V\in\cB(E,E^r),\ TV=S
  \right\}
  \leqslant C\lVert S\rVert
  \qquad(S\in\mathcal R).
\]
For every $n\in\N$, $R_n$ belongs to $\mathcal J$ and therefore to $\mathcal R$, so we can choose 
\mbox{\(
  V_n\in\cB(E,E^r)
\)}
such that
\(
  TV_n=R_n
\)
and
\(
  \lVert V_n\rVert
  \leqslant C\lVert R_n\rVert+1
  \leqslant CK+1.
\)
Put
\(
  D=CK+1.
\)
By~\eqref{eq:complemented-l1-summability}, we have 
\[
  \sum_{n=1}^\infty\lVert V_nR_nx\rVert
  \leqslant
  D\sum_{n=1}^\infty\lVert R_nx\rVert
  \leqslant DK\lVert x\rVert
\]
for every $x\in E$, so we can define an operator \(
  V\in\cB(E,E^r)
\)
by 
\[
  Vx=\sum_{n=1}^\infty V_nR_nx
  \qquad(x\in E).
\]

Using only idempotence of $R_n$ and
\eqref{eq:complemented-l1-expansion}, we obtain
\begin{align*}
  TVx
  &=
  \sum_{n=1}^\infty TV_nR_nx
   =
  \sum_{n=1}^\infty R_n^2x =  \sum_{n=1}^\infty R_nx
   =
  Px
  \qquad(x\in E).
\end{align*}
Hence
\(
  P=TV\in\Lift(T)=\mathcal R.
\)
Since $Q\in\mathcal J\subseteq\mathcal R$, we conclude that
\(
  \id_E=P+Q\in\mathcal R.
\)
Therefore
\(
  \mathcal R=\cB(E),
\)
which completes the proof of~\ref{prop:complemented-l1:rigidity}.

Since $\mathcal J$ is proper, it is contained in a maximal
right ideal $\mathcal M$ of $\cB(E)$.  By
Lemma~\ref{lem:max-closed}, $\mathcal M$ is closed, and therefore
\(
  \overline{\mathcal J}\subseteq\mathcal M.
\)
In particular, $\overline{\mathcal J}$ is proper.  It cannot be
 finitely generated by~\ref{prop:complemented-l1:rigidity}.  The same argument applies to
every maximal right ideal containing $\mathcal J$.

Finally,
\(
  \cF(E)\subseteq\overline{\mathcal J}\subseteq\mathcal M,
\)
so every maximal right ideal containing~$\mathcal{J}$ is non-fixed by
Theorem~\ref{thm:A}.
\end{proof}

In line with our general focus on complex scalars, we follow the non-standard convention that a ``Banach lattice'' means a complex Banach
lattice.  For such a~lattice~$E$, we write $E_{\mathbb R}$ for its
canonical real part, so that every element of~$E$ has a unique
representation
\(
  x+\mathrm{i}y,
\)
where \(
x,y\in E_{\mathbb R}.
\)
It is an elementary fact that~$E$ is reflexive if and only if~$E_{\mathbb R}$ is reflexive.

All order-theoretic terminology concerning~$E$, including positivity and order continuity, refers to the corresponding notions for the real Banach lattice~$E_{\mathbb R}$. In particular, the positive cone is $E_+ = (E_{\mathbb{R}})_+$, and a \emph{KB-space} is a (complex) Banach lattice~$E$ for which every norm-bounded, increasing sequence in $E_+$ converges in norm.

\begin{theorem}\label{thm:KB-lattice}
Let $E$ be an infinite-dimensional KB-space.  Then $\cB(E)$
contains a~maximal right ideal which is not  finitely
generated.

If $E$ is non-reflexive, then it contains a complemented copy of~$\ell_1$, so the stronger conclusion of
Proposition~\ref{prop:complemented-l1} holds.
\end{theorem}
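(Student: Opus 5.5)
Split according to whether $E$ is reflexive. If $E$ is reflexive, Theorem~\ref{thm:reflexive} applies directly, because $E$ is infinite-dimensional by hypothesis, and it gives a maximal right ideal of $\cB(E)$ which is not finitely generated. It therefore suffices to show that a non-reflexive KB-space contains a complemented copy of~$\ell_1$. Proposition~\ref{prop:complemented-l1} then supplies a proper right ideal $\mathcal J$; any maximal right ideal containing $\mathcal J$ (one exists by Zorn's lemma) is non-fixed and not finitely generated. This yields the first assertion in the non-reflexive case and also the stronger second assertion.

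For the complemented copy of $\ell_1$, I would pass to the real part $E_{\mathbb R}$, which is a real KB-space and is non-reflexive, since reflexivity of $E$ and of $E_{\mathbb R}$ are equivalent. I would then invoke the classical lattice-theoretic facts, found for instance in Meyer-Nieberg's monograph.
\begin{enumerate}[label=\rm(\alph*)]
\item A Banach lattice is reflexive if and only if it contains no lattice copy of $c_0$ and no lattice copy of $\ell_1$ (Lozanovskii/Tzafriri).
\item A Banach lattice is a KB-space if and only if it contains no copy of $c_0$, and such a lattice has order-continuous norm.
\end{enumerate}
By (a) and (b), a non-reflexive KB-space contains a sublattice isomorphic to $\ell_1$. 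Since $\ell_1$ has the Schur property, this sublattice can be spanned by a disjoint normalised sequence $(x_k)$ in $(E_{\mathbb R})_+$ that is equivalent to the $\ell_1$ basis.

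To obtain a complemented copy, I would use the standard fact that a disjoint sequence equivalent to the $\ell_1$ basis in a Banach lattice spans a complemented sublattice. Concretely, choose norm-one functionals $x_k^*\ge 0$ with $x_k^*(x_k)=1$. Since the $x_k$ are disjoint, the $x_k^*$ can be taken to have disjoint supports, for example by restricting to the band generated by $x_k$; order continuity of the norm makes the band projections available. Then define
\[
  Px=\sum_k x_k^*(x)\,x_k .
\]
Equivalence to the $\ell_1$ basis gives $\lVert Px\rVert\le C\sum_k|x_k^*(x)|$, and disjointness of the functionals gives $\sum_k|x_k^*(x)|\le \lVert\,|x|\,\rVert=\lVert x\rVert$. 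Hence $P$ is a bounded projection onto the closed span of $(x_k)$. Finally, complexify: $P$ extends to $E=E_{\mathbb R}+\mathrm iE_{\mathbb R}$, giving a complemented copy of the complex space $\ell_1$ in $E$.

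The main obstacle is this complementation step, in particular making the disjoint-support choice of the $x_k^*$ rigorous. Rather than reproving it, I would first try to cite the known result that in a Banach lattice every lattice copy of $\ell_1$ spanned by disjoint elements is complemented; this is a consequence of Meyer-Nieberg, Proposition~2.3.11, and it is also the KB-space characterisation used in that monograph.
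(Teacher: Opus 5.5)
Your overall structure is the paper's: the reflexive case goes via Theorem~\ref{thm:reflexive}, and the non-reflexive case uses Lozanovski\u{\i} together with the absence of $c_0$ in a KB-space, followed by Proposition~\ref{prop:complemented-l1} and complexification. The gap is in the step where you replace the paper's citation of Wojtowicz's Corollary~1 with a hand-built projection.

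\textbf{The failing step.} Your claim that disjointness of the $x_k^*$ gives $\sum_k|x_k^*(x)|\leqslant\lVert x\rVert$ is false. For disjoint positive functionals $g_1,\dots,g_N$ you only get $\sum_k g_k(|x|)\leqslant\lVert g_1+\cdots+g_N\rVert\,\lVert x\rVert$, and this norm can grow with $N$. Concretely, take the KB-space $E=\ell_1\oplus_\infty\ell_2$ and $x_k=(e_k,e_k)$. These vectors are disjoint, positive and isometrically equivalent to the $\ell_1$ basis. Now take $x_k^*=(0,e_k)\in\ell_\infty\oplus_1\ell_2=E^*$. These functionals are positive, of norm one, satisfy $x_k^*(x_k)=1$, are pairwise disjoint, and are unchanged by composing with the band projections $Q_k$. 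Yet $\sum_k|x_k^*(0,y)|=\lVert y\rVert_1$, which is infinite for $y\in\ell_2\setminus\ell_1$, so your $P$ is not even defined. Norming functionals chosen independently for each $k$ carry no information about the $\ell_1$ lower estimate, and that estimate is exactly what has to be used.

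\textbf{The repair.} Cut all the coordinate functionals from a single positive functional.
\begin{enumerate}[label=\rm(\arabic*)]
\item Suppose $c\sum_k|a_k|\leqslant\lVert\sum_k a_kx_k\rVert$. Then the functional $\sum_k a_kx_k\mapsto\sum_k a_k$ on $X=\overline{\operatorname{span}}\{x_k\}$ has norm at most $1/c$.
\item Extend it by Hahn--Banach to $\psi\in E_{\mathbb R}^*$ and put $\Phi=|\psi|$. Then $\lVert\Phi\rVert\leqslant1/c$, and $\Phi(x_k)\geqslant\psi(x_k)=1$ because $x_k\geqslant0$.
\item With $Q_k$ the band projection onto the band generated by $x_k$ (available by order continuity), set $x_k^*=\Phi(x_k)^{-1}\,\Phi\circ Q_k$. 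Then $x_k^*(x_j)=\delta_{kj}$.
\item Since $\sum_{k\leqslant N}Q_k$ is the band projection onto the band generated by $x_1,\dots,x_N$, you have $\sum_{k\leqslant N}Q_k|x|\leqslant|x|$. Hence $\sum_k|x_k^*(x)|\leqslant\sum_k\Phi(Q_k|x|)\leqslant\Phi(|x|)\leqslant c^{-1}\lVert x\rVert$.
\end{enumerate}
So $P$ is a bounded projection onto $X$. With this fix, or by citing Wojtowicz as the paper does, your argument is complete. It then differs from the paper only in proving by hand that a disjoint lattice copy of $\ell_1$ is complemented.

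A minor point: the disjointness of $(x_k)$ comes from the copy being a lattice isomorph of $\ell_1$, not from the Schur property.
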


\begin{proof}
If~$E$ is reflexive,
the conclusion follows from Theorem~\ref{thm:reflexive}.

Otherwise~$E_{\mathbb R}$ is a
non-reflexive real KB-space.  Every KB-space is weakly sequentially
complete, and therefore contains no subspace isomorphic to~$c_0$.
By Lozanovski\u{\i}'s reflexivity theorem, a non-reflexive real Banach
lattice contains a~sublattice isomorphic to either $c_0$ or~$\ell_1$; see \cite[Chapter~2]{MeyerNieberg}.  It follows that
$E_{\mathbb R}$ contains a~copy of real~$\ell_1$.

The norm of a KB-space is order continuous.  Hence
\cite[Corollary~1]{Wojtowicz} shows that $E_{\mathbb R}$ contains a~complemented subspace~$Y$ isomorphic to real~$\ell_1$.  Let
\(
  P_{\mathbb R}\colon E_{\mathbb R}\to Y
\)
be a bounded real-linear projection.  Its complexification
\[
  P_{\mathbb C}\colon E\longrightarrow E,
  \qquad
  P_{\mathbb C}(x+\mathrm{i}y)
  =
  P_{\mathbb R}x+\mathrm{i}P_{\mathbb R}y
  \quad(x,y\in E_{\mathbb R}),
\]
is a bounded complex-linear projection onto
\(
  Y_{\mathbb C}=Y+\mathrm{i}Y.
\)
Complexifying an isomorphism from real $\ell_1$ onto~$Y$ shows that
$Y_{\mathbb C}$ is isomorphic to complex~$\ell_1$.  Thus $E$ contains
a~complemented copy of complex~$\ell_1$, and
Proposition~\ref{prop:complemented-l1} applies.
\end{proof}

\begin{corollary}\label{cor:Lebesgue-spaces}
The right-sided Dales--\.{Z}elazko conjecture holds for~$\cB(L_p(\mu))$ for every $1\leqslant p<\infty$ and every measure space $(\Omega,\Sigma,\mu)$. 
\end{corollary}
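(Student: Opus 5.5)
We must show that $\cB(L_p(\mu))$ has a maximal right ideal which is not finitely generated whenever $L_p(\mu)$ is infinite-dimensional. If $L_p(\mu)$ is finite-dimensional, then $\cB(L_p(\mu))$ is itself finite-dimensional, and the conjecture holds trivially; indeed, by Theorem~\ref{thm:A} every maximal right ideal is then fixed, hence finitely generated. So assume that $E=L_p(\mu)$ is infinite-dimensional and split according to~$p$.

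\emph{Case $1<p<\infty$.} Here $L_p(\mu)$ is reflexive for every measure space, being uniformly convex by Clarkson's inequalities, or alternatively because its dual is $L_{p'}(\mu)$ via the standard duality. Theorem~\ref{thm:reflexive} then applies directly. This case requires no measure-theoretic hypotheses such as $\sigma$-finiteness, because reflexivity of $L_p(\mu)$ for $1<p<\infty$ holds in full generality.

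\emph{Case $p=1$.} We regard $L_1(\mu)$ as a complex Banach lattice with real part $L_1(\mu;\mathbb R)$. The key point is that $L_1(\mu)$ is a KB-space: if $(f_k)$ is a norm-bounded increasing sequence of non-negative functions, then the norm is additive on the positive cone, so $\lVert f_k\rVert$ is an increasing bounded sequence of reals. Moreover,
\[
\lVert f_l-f_k\rVert=\lVert f_l\rVert-\lVert f_k\rVert \qquad (l\geqslant k),
\]
so $(f_k)$ is Cauchy and therefore converges in norm. Since $E$ is infinite-dimensional, Theorem~\ref{thm:KB-lattice} applies and yields a maximal right ideal which is not finitely generated.

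The same KB-space argument in fact covers all $1\leqslant p<\infty$, since order continuity of the $L_p$-norm together with monotone convergence gives the KB property. This provides a uniform route through Theorem~\ref{thm:KB-lattice}, whose reflexive branch in any case reduces to Theorem~\ref{thm:reflexive}.

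The only step needing care is verifying the KB property, or reflexivity, without $\sigma$-finiteness assumptions on~$\mu$. The additivity argument above for $p=1$ is entirely elementary. For $p>1$, citing uniform convexity avoids any dependence on duality theorems that might require $\sigma$-finiteness.
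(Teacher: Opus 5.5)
Your proof is correct and follows the paper's argument: for $1<p<\infty$ you use reflexivity together with Theorem~\ref{thm:reflexive}, and for $p=1$ you use the KB property of $L_1(\mu)$ together with Theorem~\ref{thm:KB-lattice}. The only difference is cosmetic: you obtain the KB property from additivity of the norm on the positive cone (a Cauchy-sequence argument) rather than from the monotone convergence theorem. Both arguments are valid for arbitrary measure spaces.
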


\begin{proof}
The case where $L_p(\mu)$ is finite-dimensional is clear. For $1<p<\infty$, the space
$L_p(\mu)$ is reflexive, so the conclusion follows from
Theorem~\ref{thm:reflexive}.  Suppose
therefore that $p=1$ and $L_1(\mu)$ is infinite-dimensional.  Let
$(f_n)_{n\in\N}$ be a norm-bounded, increasing sequence in
$L_1(\mu)_+$, and put $f=\sup_n f_n$ pointwise.  The monotone
convergence theorem gives
\[ 
f\in L_1(\mu)  \quad\text{and}\quad
  \lVert f-f_n\rVert_1
  =\lVert f\rVert_1-\lVert f_n\rVert_1
  \longrightarrow0.
\]
Thus $L_1(\mu)$ is a KB-space, and
Theorem~\ref{thm:KB-lattice} applies. 
\end{proof}

Since terminology concerning Banach function spaces is not entirely
uniform, we fix the convention used here.  Let
$(\Omega,\Sigma,\mu)$ be a measure space, and let $L_0(\mu)$ denote
the space of equivalence classes of complex-valued measurable functions
on~$\Omega$.  A \emph{Banach function space} over
$(\Omega,\Sigma,\mu)$ is a vector subspace~$E$ of~$L_0(\mu)$ equipped with a complete norm~$\lVert\cdot\rVert_E$ satisfying the following ideal property: if $f\in E$ and $g\in L_0(\mu)$ satisfy
\[
  |g|\leqslant |f|
  \quad\text{almost everywhere},
\]
then $g\in E$ and
\(
  \lVert g\rVert_E\leqslant\lVert f\rVert_E.
\)
With the almost-everywhere pointwise order, such a space is a Banach
lattice.
We say that a Banach function space~$E$ has the \emph{Fatou property} if, whenever
$(f_n)_{n\in\N}$ is a norm-bounded, increasing sequence in $E_+$ which converges pointwise to a function~$f$ almost every\-where,  
we have $f\in E$ and
\[
  \lVert f\rVert_E
  =
  \sup_{n\in\N}\lVert f_n\rVert_E.
\]

\begin{corollary}
\label{cor:fatou-order-continuous}
Let $E$ be an
infinite-dimensional Banach function space.  Suppose that~$E$ has the Fatou property and
that its norm is order continuous.  Then~$E$ is a KB-space. 

Consequently~$\cB(E)$ contains a maximal
right ideal which is not  finitely generated.
\end{corollary}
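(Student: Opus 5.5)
We have to show that every norm-bounded, increasing sequence $(f_n)_{n\in\N}$ in $E_+$ converges in norm. The plan combines the Fatou property, to produce a candidate limit inside~$E$, with order continuity of the norm, to upgrade order convergence to norm convergence. Once $E$ is known to be a KB-space, the final assertion follows directly from Theorem~\ref{thm:KB-lattice}, since $E$ is infinite-dimensional.

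First, given such a sequence $(f_n)$, define $f=\sup_n f_n$ pointwise almost everywhere. This is a measurable function with values in $[0,\infty]$, and $f_n\uparrow f$ pointwise almost everywhere. Since $(f_n)$ is norm-bounded and increasing, the Fatou property yields $f\in E$ together with $\lVert f\rVert_E=\sup_n\lVert f_n\rVert_E$.

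Two small points need care here. Membership $f\in E\subseteq L_0(\mu)$ forces $f$ to be finite almost everywhere, so the pointwise supremum really is an element of $L_0(\mu)$. Moreover, $f_n\leqslant f$, so the sequence lies in the order interval $[0,f]$ of~$E$.

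Next, consider $g_n=f-f_n\in E_+$. This sequence decreases and converges to~$0$ pointwise almost everywhere. I would check that $\inf_n g_n=0$ in the lattice~$E$. Suppose $0\leqslant h\leqslant g_n$ for all~$n$ with $h\in E$. Then $h\leqslant\inf_n g_n=0$ almost everywhere, so $h=0$, because the order on~$E$ is the almost-everywhere pointwise order. Thus $g_n\downarrow0$ in order.

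Order continuity of the norm, applied to sequences, then gives $\lVert g_n\rVert_E\to0$, that is, $f_n\to f$ in norm. Complex versus real parts cause no issue, since all of this takes place in $E_+=(E_{\mathbb R})_+$.

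The main obstacle I expect is the identification of lattice infima in~$E$ with pointwise almost-everywhere infima, on which the step $g_n\downarrow0$ relies. The argument above handles this, because the ideal property makes $E$ an order ideal of $L_0(\mu)$. The remaining work is to make sure the Fatou property, as defined in the paper, is stated for increasing sequences converging pointwise to a function that is a priori only extended-valued. This is handled by taking the pointwise supremum as above and noting that Fatou's conclusion $f\in E$ already places it in $L_0(\mu)$.
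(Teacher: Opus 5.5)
Your overall route matches the paper's. You obtain the limit $f\in E$ from the Fatou property and then apply order continuity to $f-f_n\downarrow0$; your check that lattice infima in $E$ are a.e.\ pointwise infima is a welcome extra detail. The gap is at the first step. In the paper's setup, $E\subseteq L_0(\mu)$ consists of classes of complex-valued functions, and the Fatou property is only assumed for increasing sequences that \emph{converge pointwise a.e.\ to a function}~$f$. If $\sup_n f_n=+\infty$ on a set of positive measure, the sequence does not converge there, so the hypothesis of the Fatou property is not met. Your remedy, that the conclusion $f\in E$ places $f$ in $L_0(\mu)$, is circular: you use the conclusion of the property to verify its own hypothesis. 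Under a formulation that explicitly allows $[0,\infty]$-valued limits, your direct appeal would be legitimate, but that is not the definition in force here.

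The missing step is to show first that $f$ is finite almost everywhere, and the paper does this by truncation. Put $C=\sup_n\lVert f_n\rVert_E$ and fix $k\in\N$. By the ideal property, the functions $f_n\wedge k$ lie in $E_+$ with $\lVert f_n\wedge k\rVert_E\leqslant C$. They increase pointwise to the \emph{finite-valued} function $f\wedge k$, so the Fatou property legitimately gives $f\wedge k\in E$ and $\lVert f\wedge k\rVert_E\leqslant C$. Now let $A=\{f=+\infty\}$. Since $\mathbf 1_A\leqslant f\wedge 1$, we get $\mathbf 1_A\in E$. Since $k\mathbf 1_A\leqslant f\wedge k$, we get $k\lVert\mathbf 1_A\rVert_E\leqslant C$ for every $k$. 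Hence $\mathbf 1_A=0$ in $E$, that is, $\mu(A)=0$. Only then may you apply the Fatou property to $(f_n)$ itself, and the rest of your argument goes through unchanged.
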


\begin{proof}
Let $(f_n)_{n\in\N}$ be a norm-bounded, increasing sequence in $E_+$,
put
\[
  C=\sup_{n\in\N}\lVert f_n\rVert_E,
  \qquad
  f=\sup_{n\in\N}f_n
\]
pointwise, where at first $f$ is allowed to take the value $+\infty$.
For each $k\in\N$, the sequence $(f_n\wedge k)_{n\in\N}$ is
norm-bounded in $E_+$ and increases pointwise to the finite-valued
function $f\wedge k$.  The Fatou property therefore gives
\[
  f\wedge k\in E
  \quad\text{and}\quad
  \lVert f\wedge k\rVert_E
  =\sup_{n\in\N}\lVert f_n\wedge k\rVert_E
  \leqslant C.
\]
Let $A=\{f=+\infty\}$.  Since $\mathbf 1_A\leqslant f\wedge1$, the
ideal property gives $\mathbf 1_A\in E$.  Moreover,
$k\mathbf 1_A\leqslant f\wedge k$, and consequently
\[
  k\lVert\mathbf 1_A\rVert_E\leqslant C
  \qquad(k\in\N).
\]
Hence $\mathbf 1_A=0$ in $E$, so $A$ is a null set and $f$ is finite almost
everywhere.  We may now apply the Fatou property to $(f_n)$ itself to
obtain
\[
  f\in E
  \quad\text{and}\quad
  \lVert f\rVert_E
  =
  \sup_{n\in\N}\lVert f_n\rVert_E.
\]
Since
\[
  0\leqslant f-f_n\downarrow0
  \quad\text{almost everywhere},
\]
order continuity of the norm implies that
\(
  \lVert f-f_n\rVert_E\rightarrow0.
\)
Thus every norm-bounded, increasing sequence in $E_+$ converges in norm,
so $E$ is a KB-space.  

The final sentence follows from
Theorem~\ref{thm:KB-lattice}.
\end{proof}

\begin{corollary}\label{cor:Orlicz-order-continuous}
Let $(\Omega,\Sigma,\mu)$ be a measure space, let $\Phi$ be a Young
function, and suppose that the full Orlicz space $L^\Phi(\mu)$, equipped
with the Luxemburg norm, is infinite-dimensional and has
order-continuous norm.  Then
$\cB(L^\Phi(\mu))$ contains a maximal right ideal which is not
 finitely generated.

In particular, if $\mu$ is finite and non-atomic and $\Phi$ is
finite-valued, the conclusion holds whenever $\Phi$ satisfies the
$\Delta_2$-condition at infinity; equivalently, there are constants
$K>0$ and $t_0\geqslant0$ such that
\[
  \Phi(2t)\leqslant K\Phi(t)
  \qquad(t\geqslant t_0).
\]
\end{corollary}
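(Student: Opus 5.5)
The plan is to reduce to Corollary~\ref{cor:fatou-order-continuous}. That corollary already handles infinite-dimensional Banach function spaces that have the Fatou property and an order-continuous norm.

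\emph{Step 1: $L^\Phi(\mu)$ is a Banach function space with the Fatou property.} The ideal property is immediate from the Luxemburg norm
\[
\lVert f\rVert_\Phi=\inf\Bigl\{k>0:\int_\Omega\Phi(|f|/k)\,d\mu\leqslant1\Bigr\},
\]
because $\Phi$ is non-decreasing on $[0,\infty)$; completeness is standard. For the Fatou property, take $0\leqslant f_n\uparrow f$ almost everywhere with $\sup_n\lVert f_n\rVert_\Phi=C<\infty$. For every $k>C$ we have $\int\Phi(f_n/k)\,d\mu\leqslant1$ for all $n$. Since a Young function is non-decreasing and left-continuous (it is convex and lower semicontinuous, possibly jumping to $+\infty$), $\Phi(f_n/k)\uparrow\Phi(f/k)$ pointwise. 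The monotone convergence theorem then gives $\int\Phi(f/k)\,d\mu\leqslant1$. Hence $f\in L^\Phi(\mu)$ and $\lVert f\rVert_\Phi\leqslant C$; the reverse inequality follows from the ideal property. The one delicate point is left-continuity of $\Phi$ at the boundary of its finiteness interval, which is part of the usual definition of a Young function; I would state it explicitly.

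\emph{Step 2: conclude the first assertion.} Given order continuity by hypothesis, Corollary~\ref{cor:fatou-order-continuous} shows that $L^\Phi(\mu)$ is a KB-space. Theorem~\ref{thm:KB-lattice} then yields a maximal right ideal that is not finitely generated.

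\emph{Step 3: the particular case.} I must show that when $\mu$ is finite and non-atomic, $\Phi$ is finite-valued and satisfies $\Delta_2$ at infinity, the space $L^\Phi(\mu)$ is infinite-dimensional and has order-continuous norm.

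Infinite-dimensionality is easy as long as $\mu\neq0$. Non-atomicity gives infinitely many disjoint sets of positive finite measure, and their indicators lie in $L^\Phi$ because $\Phi$ is finite and $\mu$ is finite. (If $\mu=0$, the statement is vacuous for the infinite-dimensional hypothesis, so that case needs a brief remark.)

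For order continuity, let $f\in L^\Phi$ and $|f|\geqslant g_n\downarrow0$; I need $\lVert g_n\rVert_\Phi\to0$. Fix $\varepsilon>0$. The $\Delta_2$ condition at infinity, together with finiteness of $\mu$ and finiteness of $\Phi$ on $[0,t_0]$, gives $\int\Phi(c|f|)\,d\mu<\infty$ for every $c>0$. To see this, iterate $\Phi(2t)\leqslant K\Phi(t)$ on the set $\{|f|\geqslant t_0\}$, and bound the remaining part by $\Phi(2^m t_0)\mu(\Omega)$. Dominated convergence then gives $\int\Phi(g_n/\varepsilon)\,d\mu\to0$, so $\lVert g_n\rVert_\Phi\leqslant\varepsilon$ eventually.

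I expect Step~3, specifically the $\Delta_2$ bootstrap showing that $\Phi(c|f|)$ is integrable for every $c$, to be the main obstacle. Everything else is a direct appeal to earlier results. I would not need to invoke the converse ("equivalently") beyond quoting it as the standard characterisation.
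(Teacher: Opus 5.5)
Your proposal is correct and follows the paper's route: the Fatou property of the Luxemburg norm feeds into Corollary~\ref{cor:fatou-order-continuous}, and from there into Theorem~\ref{thm:KB-lattice}, while the particular case comes down to showing that $\Delta_2$ at infinity gives order continuity. The only difference is that the paper cites both facts (the second from Musielak), whereas you prove the Fatou property and the direction $\Delta_2\Rightarrow$ order continuity directly; your sequential check is enough, since sequences are all that Corollary~\ref{cor:fatou-order-continuous} uses.
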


\begin{proof}
Every full Orlicz space with the Luxemburg norm has the Fatou property,
so the first assertion follows from
Corollary~\ref{cor:fatou-order-continuous}.  On a finite non-atomic
measure space, for a~finite-valued Young function, order continuity of
the Luxemburg norm is equivalent to the $\Delta_2$-condition at
infinity; see \cite{Musielak}.
\end{proof}

We conclude this section with the observation that the KB hypothesis in Theorem~\ref{thm:KB-lattice} may be replaced with separability as long as order continuity is retained.

\begin{proposition}\label{prop:separable-order-continuous-lattice}
Let $E$ be an infinite-dimensional separable Banach lattice with
order-continuous norm.  Then $\cB(E)$ contains a maximal right ideal
which is not  finitely generated.
\end{proposition}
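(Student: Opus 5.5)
We aim to apply Theorem~\ref{thm:D}, in the form of Corollary~\ref{cor:complemented-unconditional}, or else reduce to Theorem~\ref{thm:KB-lattice}. The plan is to show that $E$ admits an unconditional Schauder decomposition into countably infinitely many non-zero subspaces. The natural source is a sequence of pairwise disjoint bands with bounded band projections.

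First, we use the structure of a Banach lattice $E$ with order-continuous norm. It is Dedekind complete, every band is a projection band, and the band projections are contractive. Separability together with order continuity gives a weak unit $e\in E_+$. The principal band generated by any element is a projection band, and band projections commute and satisfy $P_BP_C=P_{B\cap C}$ on disjoint unions.

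Next, we need infinitely many pairwise disjoint non-zero elements $x_1,x_2,\ldots$. This is standard in an infinite-dimensional Archimedean vector lattice. If no infinite disjoint family existed, $E$ would have only finitely many atoms and no non-atomic part, so it would be finite-dimensional. If $E$ has a non-atomic band, we can split $e$ repeatedly into disjoint pieces. We then enlarge the band generated by $x_1$ to absorb the complement of $\bigvee_n\{x_n\}^{dd}$. Concretely, we let $B_1=\bigl(\bigcup_{n\geqslant2}\{x_n\}^{dd}\bigr)^{d}$ and $B_n=\{x_n\}^{dd}$ for $n\geqslant2$. This gives pairwise disjoint non-zero bands $B_n$ whose band-generated span is dense, since together they generate all of $E$ as a band.

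The main step is to check that $E=\bigoplus_n B_n$ is an unconditional Schauder decomposition. For $S\subseteq\N$, the band projection $P_S$ onto the band generated by $\bigcup_{n\in S}B_n$ exists and has norm at most $1$. The projections are mutually compatible, with $P_SP_T=P_{S\cap T}$. For $x\in E_+$, the partial sums $P_{\{1,\ldots,N\}}x$ increase to $x$ in order, because the bands together generate $E$. Order continuity of the norm then gives norm convergence $P_{\{1,\ldots,N\}}x\to x$, and this extends to all $x$ via $|x|$. Since the same holds for every rearrangement and every $P_S$ is contractive, the decomposition is unconditional.

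Separability of $E$ then lets Theorem~\ref{thm:D} yield $2^{\mathfrak c}$ non-finitely-generated maximal right ideals, which is stronger than the statement. The main obstacle is producing the infinite disjoint sequence and ensuring the bands exhaust $E$. In the complex setting, all of this is done in $E_{\mathbb R}$ and then complexified; band projections complexify to contractive projections, as in the proof of Theorem~\ref{thm:KB-lattice}.
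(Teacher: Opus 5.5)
Your proof is correct, but it takes a genuinely different route from the paper.

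The paper splits into cases. The reflexive case is handled by Theorem~\ref{thm:reflexive}, which transports White's left-sided result through the adjoint. In the non-reflexive case, Lozanovski\u{\i}'s theorem gives a copy of $\ell_1$ or $c_0$ in $E_{\mathbb R}$. That copy is complemented by Wojtowicz's result or by Sobczyk's theorem, respectively, so Proposition~\ref{prop:complemented-l1} or Corollary~\ref{cor:complemented-unconditional} applies.

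You instead observe that the bands of $E$ already supply the required structure. Take an infinite disjoint sequence, set $B_n=\{x_n\}^{dd}$ for $n\geqslant 2$, and absorb the remainder into $B_1=\bigl(\bigcup_{n\geqslant2}B_n\bigr)^{d}$. Order continuity then gives $P_{\{1,\ldots,N\}}x\uparrow x$ for $x\geqslant0$, because the $B_n$ generate $E$ as a band. This is an unconditional decomposition into non-zero closed subspaces, and Theorem~\ref{thm:D} finishes the argument.

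What your route buys:
\begin{itemize}
\item It is uniform and elementary, bypassing Lozanovski\u{\i}, Wojtowicz, Sobczyk and White's thesis entirely.
\item It yields the stronger conclusion of Theorem~\ref{thm:D}: $2^{\mathfrak c}$ maximal right ideals that are not finitely generated.
\item It shows that separability enters only through the cardinality count in Theorem~\ref{thm:D}, since the band decomposition exists in every infinite-dimensional Banach lattice with order-continuous norm.
\end{itemize}
In exchange, the paper's route gives, in the $\ell_1$ case, the extra information of Proposition~\ref{prop:complemented-l1} about the right ideal $\mathcal J$. It also reuses the machinery already set up for KB-spaces.

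Two minor remarks. The weak unit is not actually needed in your argument. And Theorem~\ref{thm:D} only requires the complexified band projections to be bounded, not contractive.
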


\begin{proof}
By Theorem~\ref{thm:reflexive}, it suffices to consider the non-reflexive case.  Then the real part~$E_{\mathbb R}$ is a non-reflexive separable real Banach lattice with
order-continuous norm.  By Lozanovski\u{\i}'s theorem,
$E_{\mathbb R}$ contains a copy of real~$\ell_1$ or real~$c_0$.

In the former case, we can argue as in the proof of Theorem~\ref{thm:KB-lattice}: 
\cite[Corollary~1]{Wojtowicz} gives a complemented copy of real $\ell_1$ in $E_{\mathbb R}$. Complexifying
the corresponding projection, we obtain a complemented copy of complex~$\ell_1$ in~$E$, so Proposition~\ref{prop:complemented-l1} applies.

In the latter case, Sobczyk's theorem~\cite{Sobczyk} gives a complemented copy of real
$c_0$ in $E_{\mathbb R}$.  Its complexification is a complemented copy
of complex $c_0$ in~$E$, and Corollary~\ref{cor:complemented-unconditional} applies.
\end{proof}

\section{Idempotents and a false shortcut}
\noindent
For $\cB(E)$, idempotent-generated maximal right ideals are completely rigid.

\begin{proposition}\label{prop:idempotent-BE}
Let $E$ be a Banach space and let $P\in\cB(E)$ be an idempotent.  Then
$P\cB(E)$ is a maximal right ideal if and only if
$\dim\ker P=1$.  In that case $P\cB(E)$ is fixed.
\end{proposition}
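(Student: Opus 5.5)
The plan is to identify the right ideal $P\cB(E)$ concretely and then compare it with the fixed ideals of Proposition~\ref{prop:fixed}. The key observation is that, for an idempotent $P$,
\[
  P\cB(E)=\{S\in\cB(E): PS=S\}=\{S\in\cB(E):\ran S\subseteq\ran P\}.
\]
The inclusion ``$\subseteq$'' is clear. Conversely, if $\ran S\subseteq\ran P$, then $PS=S$, so $S=P\cdot S\in P\cB(E)$.

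Suppose first that $\dim\ker P=1$. Then $\ran P$ is a closed hyperplane, so $\ran P=\ker\lambda$ for some non-zero $\lambda\in E^*$. Indeed, one can take $\lambda$ to be the functional with $\id_E-P=y\otimes\lambda$, where $\ker P=\C y$. By the description above, $P\cB(E)=\{S:\lambda S=0\}=\mathcal M^r_\lambda$. This ideal is maximal and fixed by Proposition~\ref{prop:fixed}.

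Conversely, suppose $P\cB(E)$ is maximal. It is then proper, so $P\neq\id_E$ and $\ker P\neq\{0\}$. Apply Theorem~\ref{thm:A}. In alternative (i), $P\cB(E)=\mathcal M^r_\lambda$, and then $\ran P=\ker\lambda$: we get ``$\subseteq$'' since $P\in\mathcal M^r_\lambda$ gives $\lambda P=0$; for ``$\supseteq$'', choose $y$ with $\lambda(y)=1$; the operator $\id_E-y\otimes\lambda$ lies in $\mathcal M^r_\lambda$, so its range $\ker\lambda$ is contained in $\ran P$. Hence $\ker P\cong E/\ran P$ is one-dimensional. In alternative (ii), $\cF(E)\subseteq P\cB(E)$, so every rank-one operator has range inside $\ran P$. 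This forces $\ran P=E$ and hence $P=\id_E$, contradicting properness.

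The only mildly delicate point is this exclusion of alternative (ii) and the identification $\ran P=\ker\lambda$; both follow from the range description, so I expect no real obstacle. The final claim that $P\cB(E)$ is fixed comes from the first part of the argument.
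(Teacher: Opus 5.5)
Your proof is correct, and for the converse it takes a different route from the paper.

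The paper splits $\cB(E)=P\cB(E)\oplus Q\cB(E)$ as right modules, with $Q=\id_E-P$, so that maximality of $P\cB(E)$ becomes simplicity of $Q\cB(E)$. It then argues in two cases:
\begin{enumerate}[label=\rm(\alph*)]
\item When $\dim\ker P=1$, it checks simplicity directly: every non-zero $y\otimes\mu\in Q\cB(E)$ generates $Q=y\otimes\lambda$.
\item When $\dim\ker P>1$, it exhibits the non-zero proper submodule $F\cB(E)$, where $F=z\otimes\nu$ with $0\neq z\in\ker P$.
\end{enumerate}

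You instead describe $P\cB(E)$ as $\{S:\ran S\subseteq\ran P\}$ and invoke the dichotomy of Theorem~\ref{thm:A}. Everything then reduces to comparing $\ran P$ with $\ker\lambda$. In the case $\dim\ker P=1$, you inherit maximality from Proposition~\ref{prop:fixed} rather than proving it again.

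The paper's argument is self-contained and module-theoretic, in the same spirit as Theorem~\ref{thm:idempotent-criterion}. Yours is shorter and more transparent once Theorem~\ref{thm:A} is available.

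Your range description in fact makes Theorem~\ref{thm:A} unnecessary. If $\dim\ker P\geqslant2$, set $\lambda=\mu(\id_E-P)$ with $\mu\in E^*$ non-zero on $\ker P$. Then $\lambda\neq0$ and $\ran P\subsetneq\ker\lambda$. Hence $P\cB(E)\subsetneq\mathcal M^r_\lambda\subsetneq\cB(E)$, because $\id_E-y\otimes\lambda\in\mathcal M^r_\lambda$ (with $\lambda(y)=1$) has range $\ker\lambda\not\subseteq\ran P$.
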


\begin{proof}
Put $Q=I_E-P$.  Since $P$ is idempotent, $\ran Q=\ker P$, and $\cB(E)$ decomposes as 
a direct sum of right $\cB(E)$-modules
\[
  \cB(E)=P\cB(E)\oplus Q\cB(E).
\]
Consequently, $P\cB(E)$ is maximal if and only if $Q\cB(E)$ is a
non-zero simple right module.

If $\ker P=\{0\}$, then $P=I_E$, so $P\cB(E)$ is not proper.  

Suppose that $\dim\ker P=1$.  Write $Q=y\otimes\lambda$, where
$\lambda(y)=1$.  Every non-zero element of $Q\cB(E)$ has the form
$y\otimes\mu$ with $\mu\neq0$.  Choosing $x\in E$ with $\mu(x)=1$
gives 
\[ (y\otimes\mu)(x\otimes\lambda)=Q. \]  Thus every non-zero
element generates $Q\cB(E)$, so this module is simple. Furthermore, in this case,
$P=I_E-y\otimes\lambda$ and
$P\cB(E)=\mathcal M^r_\lambda$ by Proposition~\ref{prop:fixed}; this proves the final statement.

Finally, suppose that $\dim\ker P>1$.  Choose
$0\neq z\in\ker P$ and $\nu\in E^*$ with $\nu(z)=1$, and put
$F=z\otimes\nu$.  Then $QF=F$, so $F\cB(E)$ is a non-zero submodule of
$Q\cB(E)$.  Every operator in $F\cB(E)$ has range contained in
$\mathbb Cz$, whereas $Q$ does not.  Hence $F\cB(E)\subsetneq Q\cB(E)$, and therefore
$Q\cB(E)$ is not simple.
\end{proof}

There is a useful general criterion, but its hypothesis cannot simply be assumed.

\begin{theorem}\label{thm:idempotent-criterion}
Let $A$ be a unital Banach algebra, and suppose that every finitely
generated maximal right ideal of~$A$ is generated by an idempotent.  If
every maximal right ideal of~$A$ is finitely generated, then $A$ is
finite-dimensional.  Equivalently, every infinite-dimensional Banach
algebra satisfying the idempotent hypothesis has a maximal right ideal
which is not finitely generated.
\end{theorem}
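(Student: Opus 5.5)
Assume every maximal right ideal of $A$ is finitely generated. By hypothesis each has the form $M=pA$ for some idempotent $p$. The plan is to show that $A$ is then semiprimitive with a finite-dimensional simple module structure, and hence finite-dimensional.

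Step one: use the known theorem of Dales and \.{Z}elazko (or rather its idempotent version) for this. Write $M=pA$ and put $e=1-p$. Then $A=pA\oplus eA$ as right modules, exactly as in the proof of Lemma~\ref{lem:calkin-no-fg-max}, so $eA$ is a simple right module and $e\neq 0$. Since $\operatorname{End}_A(eA)\cong eAe$, Schur's lemma makes $eAe$ a division algebra. It is a Banach algebra with unit $e$, so the Gelfand--Mazur theorem gives $eAe=\C e$. In particular, every maximal right ideal is $(1-e)A$ for a minimal idempotent $e$ with $eAe=\C e$.

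Step two: show that the Jacobson radical is small and that the socle is everything. The sum of the minimal right ideals $eA$ forms the socle $\operatorname{soc}(A)$, a two-sided ideal. If $\operatorname{soc}(A)\neq A$, choose a maximal right ideal $M\supseteq\operatorname{soc}(A)$; such an $M$ exists because $\operatorname{soc}(A)$ is proper and right ideals containing it are handled by Zorn's lemma. Then $M=(1-e)A$ with $e$ minimal, yet $e\in eA\subseteq\operatorname{soc}(A)\subseteq M=(1-e)A$. This forces $e=(1-e)a$, whence $e=e^2=e(1-e)a=0$, a contradiction. Hence $\operatorname{soc}(A)=A$. Consequently $1$ lies in a finite sum $e_1A+\dots+e_kA$, so $A_A$ is a finite sum of simple modules $e_jA$.

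Step three: conclude finite-dimensionality, and this is the main obstacle. Each $e_jA$ must be shown finite-dimensional. The module $e_jA$ is a simple right $A$-module with commutant $\C$. One would like to use the fact that, in a Banach algebra, $e_jA$ with $e_jAe_j=\C e_j$ is a minimal right ideal on which $A$ acts irreducibly. I would try to show $\dim e_jA<\infty$ by also using left ideals. Since $A=\sum_j e_jA$ is semisimple as a right module, it is right Artinian and semiprimitive, so Wedderburn--Artin gives $A\cong\prod_i M_{n_i}(D_i)$ with division algebras $D_i$. Each $D_i$ is isomorphic to some $e A e=\C e$ with $e$ a minimal idempotent in that block. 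Hence $A\cong\prod_i M_{n_i}(\C)$, which is finite-dimensional. The delicate points are verifying that the finite-sum decomposition yields a semisimple Artinian ring, which follows since a ring that is a finite direct sum of simple right ideals is semisimple, and identifying the Wedderburn division rings with $e_jAe_j$.

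The equivalent reformulation is then simply the contrapositive: if $A$ is infinite-dimensional and satisfies the idempotent hypothesis, not all maximal right ideals can be finitely generated.
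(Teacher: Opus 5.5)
Your proof is correct. Its first two steps are essentially the paper's: the paper also uses that $(1-p)A\cong A/pA$ is simple, and that the sum of all simple right ideals lies in no maximal right ideal, so this sum equals $A$ and $1\in R_1+\cdots+R_m$.

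The routes diverge at the end. The paper observes that a finite sum of simple (hence Noetherian) modules is Noetherian, so $A^{\op}$ is a left-Noetherian Banach algebra, and then invokes the Sinclair--Tullo theorem. You instead use analysis only on the simple pieces. Since $eAe=\{a\in A:ea=a=ae\}$ is closed, Schur's lemma and Gelfand--Mazur give $\operatorname{End}_A(eA)\cong eAe=\C e$, and pure ring theory finishes the argument.

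Your route is more elementary, since Gelfand--Mazur replaces a deep automatic-finiteness theorem. It also yields more: $A$ is semisimple with $A\cong\prod_i M_{n_i}(\C)$. You can even skip Wedderburn--Artin. Write $A_A=S_1\oplus\cdots\oplus S_k$ with the $S_j$ among the $e_jA$. Then $A\cong\operatorname{End}_A(A_A)\cong\bigoplus_{j,l}\operatorname{Hom}_A(S_j,S_l)$, and each summand has dimension at most one, so $\dim A\leqslant k^2$. The paper's route is shorter to state and needs no endomorphism computation.

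Two minor remarks. First, the reference in Step one to ``the known theorem of Dales and \.{Z}elazko'' is a misattribution and plays no role. Second, Step one only treated $e=1-p$. When identifying the Wedderburn division rings, say explicitly that the Gelfand--Mazur argument applies to every idempotent $e$ with $eA$ minimal, or that every simple right $A$-module is isomorphic to some $e_jA$.
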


\begin{proof}
Under the hypotheses, every maximal right ideal has the form $pA$ for
an idempotent~$p$.  Since
$A=pA\oplus(1-p)A$ as right modules, maximality of $pA$ implies that
$(1-p)A$ is a non-zero simple right module.

Let $\Sigma$ be the sum of all simple right ideals of~$A$.  For every
maximal right ideal $M=pA$, the complementary simple right ideal
$(1-p)A$ is not contained in~$M$, so~$M$ does not contain~$\Sigma$. Hence $\Sigma=A$ because every proper right ideal is contained in a maximal one. 

Since $1\in\Sigma$, there are simple right ideals
$R_1,\ldots,R_m$ such that $1\in R_1+\cdots+R_m$, and therefore
$A=R_1+\cdots+R_m$.  Every simple module is Noetherian, and a finite
sum of Noetherian submodules is Noetherian by
\cite[Corollary~X.1.3]{Lang}.  Hence the right module $A_A$ is
Noetherian.  Equivalently, $A^{\op}$ is a left-Noetherian Banach
algebra.  The theorem of Sinclair and Tullo~\cite{SinclairTullo}
therefore implies that $A^{\op}$, and hence $A$, is finite-dimensional.
\end{proof}

\begin{example}[Upper-triangular matrices]\label{ex:triangular}
Let
\[
  A=T_2(\C)
  =\left\{\begin{pmatrix}a&b\\0&c\end{pmatrix}:a,b,c\in\C\right\}
\]
and
\[
  M=\left\{\begin{pmatrix}0&x\\0&y\end{pmatrix}:x,y\in\C\right\}
  =e_{12}A+e_{22}A.
\]
Then $M$ is a maximal right ideal generated by two elements, but it is
not singly generated and therefore is not generated by an idempotent.

Indeed, $A/M\cong\C$, so $M$ is maximal.  If
\[
  0\neq m=\begin{pmatrix}0&x\\0&y\end{pmatrix}\in M,
\]
then, for every $a,b,c\in\C$,
\[
  m\begin{pmatrix}a&b\\0&c\end{pmatrix}
  =\begin{pmatrix}0&xc\\0&yc\end{pmatrix}
  =cm.
\]
Hence $mA=\C m$ is one-dimensional.  Since $M$ is two-dimensional, no
single element generates it.
\end{example}

Thus finite generation does not imply idempotent generation even in a finite-dimensional Banach algebra.  For $\cB(E)$, proving that every finitely generated maximal right ideal is idempotent-generated would solve the stronger fixed-ideal problem by Proposition~\ref{prop:idempotent-BE}, but this requires a genuinely operator-theoretic argument.

\section{Scalar-plus-inessential spaces}

\begin{theorem}\label{thm:scalar-SS}
Let $E$ be an infinite-dimensional Banach space such that every operator
$T\in\cB(E)$ can be written as
\(
  T=\lambda\id_E+S
\)
for some $\lambda\in\C$ and $S\in\mathcal E(E)$. Then $\mathcal E(E)$ is the unique non-fixed maximal right ideal of $\cB(E)$.  If $\mathcal E(E)$ is finitely generated as a right ideal,
then there is a surjective inessential operator $E^n\to E$ for some
$n\in\N$.
\end{theorem}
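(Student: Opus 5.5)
Under the hypothesis, $\cB(E)=\C\id_E+\mathcal E(E)$. Since $E$ is infinite-dimensional, $\id_E\notin\mathcal E(E)$: otherwise $\id_E-\id_E=0$ would be Fredholm, which is false. Hence the sum is direct, and $\mathcal E(E)$ is a proper two-sided ideal of codimension one. In particular it is a right ideal of codimension one.

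\emph{Step 1: $\mathcal E(E)$ is a maximal right ideal.} Any right ideal $\mathcal R$ properly containing $\mathcal E(E)$ contains some element $\lambda\id_E+S$ with $\lambda\neq0$ and $S\in\mathcal E(E)$. Subtracting $S\in\mathcal R$ and dividing by~$\lambda$ gives $\id_E\in\mathcal R$, so $\mathcal R=\cB(E)$.

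\emph{Step 2: $\mathcal E(E)$ is non-fixed.} We have $\cF(E)\subseteq\mathcal E(E)$, so Theorem~\ref{thm:A} shows that $\mathcal E(E)$ is not fixed.

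\emph{Step 3: uniqueness.} Let $\mathcal R$ be any non-fixed maximal right ideal. Corollary~\ref{cor:strictly-singular} gives $\mathcal E(E)\subseteq\mathcal R$. Since $\mathcal R$ is proper and $\mathcal E(E)$ is maximal by Step~1, we conclude that $\mathcal R=\mathcal E(E)$.

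For the second assertion, suppose that $\mathcal E(E)$ is finitely generated as a right ideal. By Theorem~\ref{thm:B}\ref{thm:B:i}, $\mathcal E(E)=\Lift(T)$ for some $n\in\N$ and some $T\in\cB(E^n,E)$.

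\emph{Surjectivity.} Since $\cF(E)\subseteq\mathcal E(E)=\Lift(T)$, Theorem~\ref{thm:B}\ref{thm:B:ii} shows that $T$ is surjective.

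\emph{Inessentiality.} Lemma~\ref{lem:row-operator-ideal} applies to the operator ideal $\mathcal I=\mathcal E$, which is an operator ideal in Pietsch's sense as recalled before Corollary~\ref{cor:strictly-singular}. It yields $T\in\mathcal E(E^n,E)$.

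The only point requiring care is the invocation of Lemma~\ref{lem:row-operator-ideal}. The bridge is $T=\sum_{j}(T\iota_j)\pi_j$, which holds with each $T\iota_j\in\Lift(T)=\mathcal E(E)$, so the ideal property of~$\mathcal E$ for operators between different spaces is exactly what is needed. Everything else is a direct assembly of Theorem~\ref{thm:A}, Corollary~\ref{cor:strictly-singular} and Theorem~\ref{thm:B}. I do not expect a genuine obstacle.
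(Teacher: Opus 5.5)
Your proposal is correct and follows essentially the same route as the paper: codimension one of $\mathcal E(E)$ gives maximality, Theorem~\ref{thm:A} gives non-fixedness, Corollary~\ref{cor:strictly-singular} gives uniqueness, and Theorem~\ref{thm:B} together with Lemma~\ref{lem:row-operator-ideal} produces the surjective inessential operator $E^n\to E$. You spell out a few details the paper leaves implicit, namely why $\id_E\notin\mathcal E(E)$ and why a proper right ideal of codimension one is maximal, but the argument is otherwise the same.
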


\begin{proof}
Since $E$ is infinite-dimensional, $I_E$ is not inessential, so
$\mathcal E(E)$ is proper.  The scalar in the stated decomposition is
unique, and therefore $\mathcal E(E)$ has codimension one in
$\cB(E)$.  Hence it is a maximal right ideal.  Moreover,
$\cF(E)\subseteq\mathcal E(E)$, so the mutually exclusive
alternatives in Theorem~\ref{thm:A} show that $\mathcal E(E)$ is
non-fixed.  Every non-fixed maximal right ideal
contains $\mathcal E(E)$ by Corollary~\ref{cor:strictly-singular}, and
therefore equals it.

Suppose that $\mathcal E(E)$ is finitely generated.  By
Theorem~\ref{thm:B},
\(
  \mathcal E(E)=\Lift(T)
\)
for some $n\in\N$ and some surjection $T\in\cB(E^n,E)$.  Lemma~\ref{lem:row-operator-ideal}
shows that $T\in\mathcal E(E^n,E)$.
\end{proof}

The following corollary records the two most important instances in which this result applies. 

\begin{corollary}\label{cor:HI}${}$
\begin{enumerate}[label=\rm(\roman*)]
\item\label{cor:HI:i} Let $E$ be a hereditarily indecomposable Banach space. Then~$\cS(E)$ is the unique non-fixed maximal right ideal of $\cB(E)$.  Consequently, if there is no surjective strictly singular operator
$E^n\to E$ for any $n\in\N$, then $\cS(E)$ is not finitely
generated as a right ideal.  In particular, $\cS(E)$ is not finitely
generated when $E$ is reflexive.
\item\label{cor:HI:ii} Let $E$ be an infinite-dimensional Banach space which has `very few operators' in the sense that every operator on $E$ has the form $\lambda\id_E+K$ for some $\lambda\in\C$ and $K\in\cK(E)$.  Then $\cK(E)$ is the unique non-fixed maximal right ideal of $\cB(E)$, and it is not finitely generated.  

Consequently, $\cB(E)$ satisfies the right-sided Dales--\.{Z}elazko conjecture in this case.
\end{enumerate}
\end{corollary}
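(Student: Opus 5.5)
Both parts will be reduced to Theorem~\ref{thm:scalar-SS}. The work is to check its hypothesis, identify $\mathcal E(E)$ with $\cS(E)$ or $\cK(E)$ respectively, and then exclude a surjective operator of the relevant class $E^n\to E$.

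For~\ref{cor:HI:i}, I will use the Gowers--Maurey theorem that every operator on a (complex) hereditarily indecomposable space has the form $\lambda\id_E+S$ with $S\in\cS(E)$. Since $\cS(E)\subseteq\mathcal E(E)$, the hypothesis of Theorem~\ref{thm:scalar-SS} holds, and $E$ is infinite-dimensional by definition. I then need $\mathcal E(E)=\cS(E)$. Both ideals are proper: $\mathcal E(E)$ because $\id_E$ is not inessential. Both have codimension one, since $\cB(E)=\C\id_E+\cS(E)$. Hence the inclusion $\cS(E)\subseteq\mathcal E(E)$ is an equality. Theorem~\ref{thm:scalar-SS} now says that $\cS(E)$ is the unique non-fixed maximal right ideal. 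If it is finitely generated, the theorem gives a surjective operator $T\in\mathcal E(E^n,E)$. Lemma~\ref{lem:row-operator-ideal}, applied to $\Lift(T)=\cS(E)$, refines this to $T\in\cS(E^n,E)$, which yields the conditional statement.

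For the reflexive case, I will use that surjective strictly singular operators onto a reflexive space do not exist. This is the step I expect to require the most care. My plan is to dualise: if $T\colon E^n\to E$ is surjective, then $T^*\colon E^*\to(E^n)^*$ is an isomorphic embedding. For reflexive spaces, an operator is strictly singular exactly when its adjoint is strictly singular; this follows because strict singularity and strict cosingularity coincide under reflexivity (Pełczyński). An injective operator with closed range is not strictly singular on an infinite-dimensional space, so $T^*$ cannot be strictly singular, and hence neither can $T$. If I want to avoid this duality fact, there is an alternative: since $T$ is surjective, $\ker T$ has infinite codimension. By Lemma~\ref{lem:row-operator-ideal}, each $T\iota_j$ is strictly singular. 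Since $T=\sum_j T\iota_j\pi_j$ is surjective, some $T\iota_j$ has non-closed range or infinite-dimensional range behaviour that I would need to exploit. I will prefer the duality route.

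For~\ref{cor:HI:ii}, the hypothesis $\cB(E)=\C\id_E+\cK(E)$ with $\cK(E)\subseteq\mathcal E(E)$ gives Theorem~\ref{thm:scalar-SS}, and the same codimension-one argument gives $\mathcal E(E)=\cK(E)$. Suppose $\cK(E)$ is finitely generated. Then Lemma~\ref{lem:row-operator-ideal} gives a compact surjection $T\colon E^n\to E$. By the open mapping theorem, $B_E\subseteq T(CB_{E^n})$ is relatively compact, so $E$ is finite-dimensional, a contradiction. Finally, Theorem~\ref{thm:A} together with uniqueness shows that every other maximal right ideal is fixed. Since $\cK(E)$ is a maximal right ideal that is not finitely generated, the right-sided Dales--\.{Z}elazko conjecture holds for $\cB(E)$.
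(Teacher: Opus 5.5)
\textbf{The gap is the reflexive case of (i).} Your proofs of the first assertions of (i) and of all of (ii) are correct and essentially the paper's. The paper phrases the refinement as $\cS(E^n,E)=\mathcal E(E^n,E)$ via Lemma~\ref{lem:row-operator-ideal}, which is the same point.

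The fact you invoke, that on reflexive spaces $T$ is strictly singular if and only if $T^*$ is, is false. Pe\l czy\'nski's duality only gives that, between reflexive spaces, $T$ is strictly singular if and only if $T^*$ is strictly \emph{cosingular}. Those two classes differ. Here is a concrete counterexample:
\begin{enumerate}[label=(\alph*)]
\item For $1<p'<r<2$, the space $\ell_r$ embeds in $L_{p'}[0,1]$ via i.i.d.\ $r$-stable variables.
\item Dualising gives a surjection $L_p[0,1]\to\ell_{r'}$ with $2<r'<p$.
\item By the Kadec--Pe\l czy\'nski dichotomy, every infinite-dimensional subspace of $L_p$ contains $\ell_2$ or $\ell_p$, and neither embeds in $\ell_{r'}$. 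So this surjection is strictly singular, yet its adjoint is an isomorphic embedding.
\end{enumerate}
Hence ``no strictly singular surjection onto a reflexive space'' is not a valid principle. Your argument does not exclude a strictly singular surjection $E^n\to E$ for reflexive hereditarily indecomposable $E$. One can exclude such surjections when $E^*$ is also hereditarily indecomposable, but that is not among the hypotheses. Your fallback via the operators $T\iota_j$ is, as you acknowledge, unfinished.

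\textbf{How the paper closes this case.} The paper never proves that such surjections do not exist. It argues as follows:
\begin{enumerate}[label=(\arabic*)]
\item Theorem~\ref{thm:reflexive} (White's left-sided result, transported by $T\mapsto T^*$) gives a maximal right ideal of $\cB(E)$ that is not finitely generated.
\item This ideal is not fixed, because fixed maximal right ideals are singly generated by Proposition~\ref{prop:fixed}.
\item By the uniqueness you have already established, it equals $\cS(E)$.
\end{enumerate}
Replacing your duality step with this argument completes the proof.
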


\begin{proof}
\ref{cor:HI:i}. Gowers and Maurey~\cite[Theorem~18]{GM:1993} have proved that every operator on a~hereditarily indecomposable Banach space~$E$ is a scalar multiple of the identity plus a strictly singular operator. (Note that at this point it is important that we work over the complex scalar field. Indeed, Ferenczi~\cite{Ferenczi:1997,Ferenczi:2007} has shown that for a real hereditarily indecomposable Banach space~$E$, $\cB(E)/\cS(E)$ is a division ring isomorphic to either~$\mathbb{R}$, $\C$, or the algebra~$\mathbb{H}$ of quaternions, and that all three possibilities occur.) 
Since 
\[ \cS(E)\subseteq\mathcal E(E)\subsetneq\cB(E), \] 
we deduce
that $\cS(E)=\mathcal E(E)$.  Lemma~\ref{lem:row-operator-ideal},
applied to both operator ideals, shows that an operator
$T\in\cB(E^n,E)$ belongs to either row ideal precisely when each
coordinate $T\iota_j$ belongs to the corresponding ideal on~$E$.
Consequently,
$\cS(E^n,E)=\mathcal E(E^n,E)$.  The first assertions now follow
from Theorem~\ref{thm:scalar-SS}.  

If $E$ is reflexive,
Theorem~\ref{thm:reflexive} supplies a maximal right ideal which is not finitely generated. It cannot be fixed, because fixed maximal right ideals are singly generated by Proposition~\ref{prop:fixed}; hence the
uniqueness just proved forces this ideal to be $\cS(E)$.

\ref{cor:HI:ii}. Arguing in the same way, just with the ideal of compact operators instead of the strictly singular operators, we obtain $\cK(E)=\mathcal E(E)$.  Applying Lemma~\ref{lem:row-operator-ideal} to the two operator ideals as above gives $\cK(E^n,E)=\mathcal E(E^n,E)$. Hence Theorem~\ref{thm:scalar-SS} shows that $\cK(E)$ is the unique
non-fixed maximal right ideal of $\cB(E)$.  If it were finitely
generated, Theorem~\ref{thm:scalar-SS} would give a surjective compact
operator $E^n\to E$ for some $n\in\N$, which is impossible because the range of a compact
operator cannot be both closed and infinite-dimensional. 
\end{proof}

The Argyros--Haydon space \cite{ArgyrosHaydon} is a prominent example of a Banach space which satisfies the scalar-plus-compact hypothesis of Corollary~\ref{cor:HI}\ref{cor:HI:ii}.

\section{The exact remaining problem}\label{sec:remaining}

Corollary~\ref{cor:exact-obstruction} gives a precise version of the stronger right-sided question for $\cB(E)$.

\begin{question}\label{q:strong}
Does there exist an infinite-dimensional Banach space $E$ which admits, for some $n\in\N$, a surjection $T\in\cB(E^n,E)$ which is not right invertible, but the row operator $[T\ S]$ given by~\eqref{thm:B:iv:row_op} is right invertible for every $S\in\cB(E)\setminus\Lift(T)$?
\end{question}

A negative answer for a given Banach space $E$ is equivalent to saying that every finitely generated maximal right ideal of $\cB(E)$ is fixed; this implies that the right-sided Dales--\.{Z}elazko conjecture is true for~$\cB(E)$ by Corollary~\ref{cor:fixed-implies-DZ}.  On the other hand, a positive answer would produce a non-fixed, finitely generated  maximal right ideal of~$\cB(E)$, but would \emph{not} disprove the right-sided Dales--\.{Z}elazko conjecture: $\cB(E)$ could still contain another maximal right ideal which is not finitely generated.

The reduction also identifies exactly why several natural arguments stop short.

First, a surjection $T\in\cB(E^n,E)$ need not be right invertible.
As Remark~\ref{rem:no-linear-selection} explains, the open mapping
theorem gives bounded pointwise choices, but not necessarily a bounded
linear choice.  Example~\ref{ex:mixed} shows that surjections need not
be right invertible even in simple separable spaces.  Maximality of the
right ideal~$\Lift(T)$ adds the completion property for the row operator
$[T\ S]$, but we have found no general argument which turns that
property into a right inverse for~$T$.

Second, a finitely generated maximal right ideal need not be generated
by an idempotent, as Example~\ref{ex:triangular} shows.
Theorem~\ref{thm:idempotent-criterion} is therefore only a conditional
criterion, not a proof of the conjecture.

Third, separability controls cardinality but is not by itself a counting
argument.  When $E$ is separable, $|\cB(E)|=\mathfrak{c}$, so
$\cB(E)$ contains only $\mathfrak{c}$ finite subsets.  One still needs
an independent reason for the existence of more than $\mathfrak{c}$
maximal right ideals.  The Boolean family in Theorem~\ref{thm:D}
supplies such a reason; separability of~$E$ alone does not.

Accordingly, the unresolved operator-theoretic issue is not
surjectivity, which is automatic for the operator attached to a non-fixed, 
finitely generated  maximal right ideal.  The issue is whether
a non-split quotient $E^n\twoheadrightarrow E$ can have a maximal
lifting ideal.  This is the point that requires further work.

\end{document}